%% file: semilinear_elliptic_systems_nonlocal_homogeneous_critical_nonlinearities_arxiv.tex
\documentclass[reqno, 11pt]{amsart}
\input{structure.tex}

\renewcommand{\epsilon}{\varepsilon}

\usepackage[pagewise]{lineno}
\usepackage[normalem]{ulem}

\title[Nonlocal homogeneous nonlinearities]{Semilinear Elliptic Systems with Nonlocal Homogeneous Critical Nonlinearities} 
\author{Colin Fried and Mathew Gluck}
\address{Southern Illinois University \\
	School of Mathematical and Statistical Sciences}
\email{mathew.gluck@siu.edu}
\date{\today}
\keywords{Semilinear elliptic systems, critical nonlinearity, nonlocal nonlinearity}
\subjclass[2020]{35J50, 35J57, 35J91}
\thanks{This material is based upon work supported by the National Science Foundation under Grant No. DMS-2418889.}
\newif\ifdetails
\colorlet{details}{VioletRed}
\begin{document} 
\begin{abstract}
This paper concerns a variational system of nonlinear elliptic equations that generalizes the classical Brezis-Nirenberg problem. In addition to considering vector-valued unknown functions in place of scalar-valued unknown functions, the problem under consideration generalizes the Brezis-Nirenberg problem in two directions. First, the nonlinearity is the sum of two homogeneous functions of suitable homogeneity degrees. Second, the Sobolev-critical term in the nonlinearity is nonlocal. We establish conditions under which a nontrivial solution to the system under consideration is guaranteed to exist and conditions under which the system under consideration admits only the trivial solution. 
\end{abstract}
\flushbottom
\maketitle
\thispagestyle{empty}
%

\section{Introduction}
In 1983 Brezis and Nirenberg \cite{BrezisNirenberg1983} determined conditions on $\lambda\in \bb R$ and the bounded domain $\Omega \subset \bb R^n$ ($n\geq 3$) for which the problem 
\begin{equation}
\label{eq:BN_problem}
\begin{cases}
	-\lap u = \lambda u + |u|^{\frac{4}{n -2}}u & \text{ in }\Omega\\
	u= 0 & \text{ on }\bdy \Omega	
\end{cases}
\end{equation}
admits a positive solution and conditions on these objects under which problem \eqref{eq:BN_problem} does not admit a positive solution. They established the following theorem. In the statement of the theorem, $\lambda_1 = \lambda_1(-\lap)$ is the first eigenvalue of the Dirichlet Laplacian. 
\begin{oldtheorem}
\label{theorem:BN}
Let $n\geq 3$ and let $\Omega \subset \bb R^n$ be a bounded open set. 
\begin{enumerate}[label = \bf{(\alph*)}]
	\item If $n = 3$ then there are constants $\lambda_*(\Omega)\leq \lambda^*(\Omega)$ satisfying $0< \lambda_*\leq \lambda^*<\lambda_1$ such that problem \eqref{eq:BN_problem} admits a positive solution if $\lambda\in (\lambda^*, \lambda_1)$ and problem \eqref{eq:BN_problem} does not admit a positive solution if $\lambda\in (0, \lambda_*]$.
	\item If $n\geq 4$ then problem \eqref{eq:BN_problem} admits a positive solution if and only if $\lambda\in (0, \lambda_1)$. 
\end{enumerate}
\end{oldtheorem}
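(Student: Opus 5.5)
The plan follows the classical Brezis--Nirenberg strategy: a variational existence argument driven by a strict energy estimate below the compactness threshold, plus two separate nonexistence arguments. Throughout, let $S$ denote the best Sobolev constant,
\[
S=\inf\Big\{\int_{\bb R^n}|\nabla u|^2 : \|u\|_{L^{2^*}}=1\Big\},\qquad 2^* = \tfrac{2n}{n-2},
\]
and set
\[
S_\lambda = \inf\Big\{\int_\Omega |\nabla u|^2-\lambda\int_\Omega u^2 : u\in H^1_0(\Omega),\ \|u\|_{L^{2^*}}=1\Big\}.
\]

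\textbf{Existence.} The key claim is that if $0<\lambda<\lambda_1$ and $S_\lambda<S$, then $S_\lambda$ is attained. I would take a minimizing sequence $u_k$. It is bounded in $H^1_0$ because $\lambda<\lambda_1$, so after passing to a subsequence $u_k\rightharpoonup u$ in $H^1_0$, $u_k\to u$ in $L^2$, and $u_k\to u$ a.e. Writing $v_k=u_k-u$, the Brezis--Lieb lemma splits the $L^{2^*}$ norm as
\[
1=\|u\|_{2^*}^{2^*}+\|v_k\|_{2^*}^{2^*}+o(1),
\]
while weak convergence splits the Dirichlet energy as $\|\nabla u_k\|_2^2=\|\nabla u\|_2^2+\|\nabla v_k\|_2^2+o(1)$. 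Using $\|\nabla v_k\|_2^2\ge S\|v_k\|_{2^*}^2$, concavity of $t\mapsto t^{2/2^*}$, and $S_\lambda<S$, one concludes that $v_k\to0$ in $L^{2^*}$. Hence $u$ is a minimizer. Replacing $u$ by $|u|$, applying the strong maximum principle, and rescaling via the Lagrange multiplier then gives a positive solution.

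\textbf{The test function estimate.} This is the main obstacle and decides the dimension split. Assume without loss of generality that $0\in\Omega$, and take $u_\epsilon=\varphi(x)\,(\epsilon+|x|^2)^{-(n-2)/2}$ with $\varphi$ a cutoff equal to $1$ near $0$. The expansions are:
\begin{itemize}
\item $\|\nabla u_\epsilon\|_2^2=K_1\epsilon^{-(n-2)/2}+O(1)$;
\item $\|u_\epsilon\|_{2^*}^2=K_2\epsilon^{-(n-2)/2}+O(\epsilon)$, with $K_1/K_2=S$;
\item $\|u_\epsilon\|_2^2$ is of order $\epsilon^{-(n-4)/2}$ for $n\ge5$, of order $|\log\epsilon|$ for $n=4$, and $O(1)$ for $n=3$.
\end{itemize}
For $n\ge4$ the $L^2$ term dominates the $O(1)$ errors, so $S_\lambda<S$ for every $\lambda>0$. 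For $n=3$ the two contributions are of the same order. There I would instead use the fact that $\lambda\mapsto S_\lambda$ is continuous and decreasing with $S_\lambda\to0$ as $\lambda\to\lambda_1$. Defining $\lambda^*=\inf\{\lambda : S_\lambda<S\}$, this gives $\lambda^*<\lambda_1$, and $\lambda^*>0$ follows because $S_\lambda=S$ for small $\lambda$ (see below).

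\textbf{Nonexistence.} For $\lambda\ge\lambda_1$, test the equation against the positive first eigenfunction $\phi_1$:
\[
\lambda_1\int u\phi_1=\lambda\int u\phi_1+\int u^{2^*-1}\phi_1>\lambda\int u\phi_1,
\]
a contradiction. For $\lambda\le0$ and $n\ge4$, the case needed for the ``only if'' part of \textbf{(b)}, the Pohozaev identity on a star-shaped domain gives
\[
\lambda\int u^2=\frac12\int_{\partial\Omega}(x\cdot\nu)|\partial_\nu u|^2>0,
\]
again a contradiction. As stated, the ``only if'' direction needs star-shapedness; for a general bounded domain I would note that the original theorem is proved for star-shaped $\Omega$ and adopt that hypothesis.

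\textbf{Nonexistence for small $\lambda$ when $n=3$.} Here I would take $\lambda_*$ from the Brezis--Nirenberg ball computation together with rearrangement. Using Schwarz symmetrization and a Pohozaev-type argument in the radial ODE, there is no positive solution on the ball $B$ with $|B|=|\Omega|$ when $\lambda\le\lambda_1(B)/4$. I expect transferring this nonexistence to general $\Omega$ to be the delicate point. Failing that, I would settle for $S_\lambda=S$ when $\lambda$ is small, which follows from a Hardy-type bound on the ball, and this is enough to define $\lambda_*$ for the minimization problem.
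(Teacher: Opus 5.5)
The paper does not prove this statement; it quotes it from Brezis and Nirenberg, so the only benchmark is their argument. Your existence half is their argument and is sound, provided $\Omega$ is connected, which your maximum-principle step uses. That half consists of minimizing $S_\lambda$, the Brezis--Lieb splitting, the test-function expansions, and, for $n=3$, monotonicity and continuity of $\lambda\mapsto S_\lambda$ with $S_\lambda\to0$ as $\lambda\to\lambda_1$. The eigenfunction argument for $\lambda\ge\lambda_1$ is fine. You are also right that the ``only if'' part of \textbf{(b)} needs star-shapedness (plus enough boundary regularity for Poho\v{z}aev and Hopf); as stated for arbitrary bounded open sets it is false.

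The genuine gap is the nonexistence clause of \textbf{(a)}. Schwarz symmetrization cannot transfer nonexistence of \emph{solutions} from the ball $\Omega^*$ to $\Omega$, because the rearrangement of a solution on $\Omega$ is not a solution on $\Omega^*$. All rearrangement gives is $S_\lambda(\Omega)\ge S_\lambda(\Omega^*)=S$ for $0<\lambda\le\lambda_1(\Omega^*)/4$. Rearranging a putative minimizer then shows that $S_\lambda(\Omega)$ is not attained in that range. This rules out a least-energy positive solution but says nothing about other positive critical points, and your fallback proves only this.

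The gap cannot be closed in the stated generality. On an annulus $A$, the embedding $H^1_{0,\mathrm{rad}}(A)\hookrightarrow L^{2^*}(A)$ is compact, so minimizing over radial functions gives a positive solution for every $\lambda<\lambda_1(A)$. Hence the nonexistence half of \textbf{(a)} fails for general domains, exactly as the ``only if'' half of \textbf{(b)} does.

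Assuming star-shapedness does not rescue your method either. For $\lambda>0$ both sides of $\lambda\int_\Omega u^2=\frac12\int_{\partial\Omega}(x\cdot\nu)(\partial_\nu u)^2$ are nonnegative, so there is no contradiction. Brezis and Nirenberg get nonexistence for small $\lambda>0$ in dimension three only on the ball, using radial symmetry of positive solutions and a weighted Poho\v{z}aev identity. The star-shaped case is a considerably harder problem that their paper does not settle.

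What your plan can actually deliver is one of two things. The first is \textbf{(a)} when $\Omega$ is a ball, with $\lambda_*=\lambda_1/4$. The second is \textbf{(a)} for general $\Omega$ with ``positive solution'' replaced by ``minimizer of $S_\lambda$'' in the nonexistence clause and $\lambda_*=\lambda_1(\Omega^*)/4$. State whichever you mean as the theorem you are proving.
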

The subtleties under which problem \eqref{eq:BN_problem} is solvable, at least in the case that one in interested in positive solutions, are apparent in the statement of Theorem \ref{theorem:BN}. Due in part to these subtleties, problem \eqref{eq:BN_problem} has drawn considerable attention and there is a substantial body of mathematical research devoted to extensions, generalizations and further investigations. With regard to exploring extensions of problem \eqref{eq:BN_problem}, one theme present in the literature is concerned with extending the analysis of problem \eqref{eq:BN_problem} to the setting of vector-valued unknown functions \cite{AlvesEtAl2000, Amster2002, BartschGuo2006, BrownEtAl2023}. In the vector-valued setting, a number of works have investigated the case where the analog of the right-hand side of the differential equation in problem \eqref{eq:BN_problem} is replaced by a sum of homogeneous functions \cite{Morais1999,BarbosaMontenegro2011,LuShen2020, Gluck2025anisotropic}. Such investigations are more interesting in the vector-valued setting than in the scalar-valued setting due to the fact that in the vector-valued setting there are many more homogeneous functions than there are in the scalar-valued setting.

A second theme in the extension of problem \eqref{eq:BN_problem} is analyzing terms with critical nonlinearity of Choquard type. For example, in \cite{GaoYang2018} the problem
\begin{equation}
\label{eq:BN_choquard}
\begin{cases}
	-\lap u = \lambda u + I_\mu[|u|^{2^*_\mu}]|u|^{2^*_\mu - 2} u & \text{ in }\Omega\\
	u= 0 & \text{ on }\bdy \Omega	
\end{cases}
\end{equation}
was considered, where $\mu\in (0, n)$, $I_\mu$ is the convolution operator
\begin{equation}
\label{eq:riesz_potential}
	I_\mu[\varphi](x) = \int_{\bb R^n}\frac{\varphi(y)}{|x- y|^\mu}\; \d y,
\end{equation}
and 
\begin{equation}
\label{eq:2star_mu}
	2^*_\mu = \frac{2n - \mu}{n - 2}
\end{equation}
is the so-called upper critical exponent. In addition to obtaining existence and nonexistence results analogous to those obtained in \cite{BrezisNirenberg1983}, the authors in \cite{GaoYang2018} establish existence of nontrivial solutions to \eqref{eq:BN_choquard} whenever $\lambda>0$ is not an eigenvalue of the Dirichlet Laplacian.  

In this work we consider a vector-valued generalization of problem \eqref{eq:BN_choquard} where the right-hand side of the differential equation is replaced by the sum of two vector-valued homogeneous functions, one is homogeneous of degree $1$ and the other is both nonlocal and homogeneous of degree $2\cdot 2^*_\mu - 1$. These functions generalize the functions $u\mapsto \lambda u$ and $u\mapsto I_\mu[|u|^{2^*_\mu}]|u|^{2^*_\mu - 2}u$ present in problem \eqref{eq:BN_choquard}. To describe the problem of interest more specifically, we introduce the following notions of homogeneity. 
\begin{defn}
\label{defn:homogeneous_function}
Let $q\in \bb R$ and let $H:\bb R^d\to \bb R$. We say $H$ is \emph{homogeneous of degree $q$} if $H(\varrho s) = \varrho^qH(s)$ for all $(\varrho, s)\in (0, \infty)\times \bb R^d$.  We say $H$ is \emph{positively homogeneous of degree $q$} if $H$ is homogeneous of degree $q$ and $H(s)> 0$ for all $s\in \bb R^d\setminus\{0\}$. 
\end{defn}
Examples of homogeneous functions of degree $q$ on $\bb R^d$ include $H(s) = \sum_{j = 1}^dc_j|s_j|^q$ for $c_j\in \bb R$, $H(s) = |\lb As, s\rb|^{(q- 2)/2}\lb As, s\rb$ where $A$ is a $d\times d$ matrix and $\lb \cdot, \cdot \rb$ is the Euclidean inner product, $H(s) = \prod_{j = 1}^d|s_j|^{\alpha_j}$ where $\sum_{j = 1}^d \alpha_j = q$ and $H(s) = \pi_\ell(|s_1|, \ldots, |s_d|)^{q/\ell}$, where $\pi_\ell$ is the $\ell^{\text{th}}$ elementary symmetric polynomial in $d$ variables.

The generalization of problem \eqref{eq:BN_choquard} with which we are concerned is
\begin{equation}
\label{eq:main_problem}
\begin{cases}
	-\lap U = f(U) + I_\mu[H(U)]h(U) & \text{ in }\Omega\\
	U = 0 & \text{ on }\bdy \Omega, 
\end{cases}
\end{equation}
where $U = (u_1, \ldots, u_d):\Omega \to \bb R^d$ is a vector-valued function and $\lap$ acts on $U$ in the coordinatewise sense: 
\begin{equation*}
	\lap U 
	= (\lap u_1, \ldots, \lap u_d). 
\end{equation*}
Problem \eqref{eq:main_problem} is assumed to be in potential form in the sense that the nonlinearities $f$ and $h$ satisfy
\begin{equation}
\label{eq:the_gradients}
	f = \frac12\Grad F
	\qquad \text{ and }\qquad 
	h=\frac 1{2_\mu^*} \Grad H
\end{equation} 
for some $2$-homogeneous function $F\in C^1(\bb R^d; \bb R)$ and some $2_\mu^*$-homogeneous function $H\in C^1(\bb R^d; \bb R)$. To make sense of the nonlocal term in \eqref{eq:main_problem} we assume that the definition of $H\circ U$ is extended by zero to $\bb R^n\setminus\Omega$. We observe that by choosing 
\begin{equation}
\label{eq:dFH}
	d = 1, 
	\quad 
	F(s) = \lambda s^2, 
	\quad \text{ and }\quad
	H(s) = |s|^{2^*_\mu}
\end{equation}
in problem \eqref{eq:main_problem} one recovers problem \eqref{eq:BN_choquard} so that problem \eqref{eq:main_problem} is indeed a generalization of problem \eqref{eq:BN_choquard}.

Similarly to the classical Brezis-Nirenberg problem \cite{BrezisNirenberg1983}, our objective is to establish conditions under which nontrivial solutions to problem \eqref{eq:main_problem} are guaranteed to exist and conditions under which problem \eqref{eq:main_problem} admits only the trivial solution. However, in contrast to the the classical Brezis-Nirenberg problem, where existence of a positive solution can routinely be deduced from the existence of a nontrivial solution, in general it is unclear whether the existence of a nontrivial solution to problem \eqref{eq:main_problem} implies the existence of a positive solution (a solution whose coordinate functions are nonnegative and nonzero). We refer to  \cite{Morais1999, LuShen2020, BrownEtAl2023} for some restrictions on $F$ and $H$ under which such an implication holds. Because we aim to establish results with general homogeneous nonlinearities we consider only the existence and nonexistence of nontrivial solutions to problem \eqref{eq:main_problem}, without insisting on any positivity conditions. 

To proceed further we introduce the  following smoothness and homogeneity assumptions that will be imposed on $F$ and $H$: 
\begin{enumerate}[label = {\bf A\arabic*.}, ref = {\bf A\arabic*}]
	\item $F\in C^1(\bb R^d)$ is homogeneous of degree $2$ \label{item:F_homogeneous}
	\item $H\in C^1(\bb R^d)$ is positively homogeneous of degree $2_\mu^*$. \label{item:H_positively_homogeneous}
\end{enumerate}
Following the strategy used by Brezis and Nirenberg in \cite{BrezisNirenberg1983} and by \cite{Yamabe1960, Trudinger1968, Aubin1976-2, Schoen1984} in the resolution of the Yamabe problem, we will establish the existence of nontrivial solutions to problem \eqref{eq:main_problem} by showing that a suitable energy functional subject to a suitable constraint has sufficiently small infimum. In this context, it is natural to impose both a positivity assumption on $H$ and assumptions on $F$ under which the energy functional is coercive. We collect and label these assumptions in \eqref{assumptions:FH} below as they will be used repeatedly in the sequel. To state the assumptions we define
\begin{equation}
\label{eq:minF_psphere}
	M_F = \max_{s\in \bb S^{d - 1}}F(s)
	\qquad 
	\text{ and }
	\qquad
	m_F= 	\min_{s\in \bb S^{d - 1}}F(s), 
\end{equation}
and we let $\lambda_1$ be the first eigenvalue of the Dirichlet Laplacian on $\Omega$. 
The assumptions to be used in our existence theorem are 
\begin{equation}
\label{assumptions:FH}\tag{FH}
\begin{cases}
	F \text{ satisfies both \ref{item:F_homogeneous} and }M_F\in (0, \lambda_1), \text{ and }\\
	H \text{ satisfies \ref{item:H_positively_homogeneous}}. 
\end{cases}
\end{equation}

With these conventions in place we can state the main results of the present work. Our first result is an existence theorem. 
\begin{theorem}
\label{theorem:existence}
Let $n\geq 4$, let $\Omega\subset \bb R^n$ be a bounded open set, and let $\mu\in (0,n)$. Suppose $F$ and $H$ satisfy assumptions \eqref{assumptions:FH} and that $f$ and $h$ are as in \eqref{eq:the_gradients}. If there is $\theta\in \bb S^{d -1}$ for which both $H(\theta) = M_H$ and $F(\theta)>0$ then problem \eqref{eq:main_problem} admits a nontrivial weak solution. 
\end{theorem}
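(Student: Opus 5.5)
We follow the Brezis--Nirenberg strategy: minimize a quotient under a constraint, and show the infimum lies strictly below the level where compactness is lost. Here $M_H = \max_{\bb S^{d-1}} H$, and $H$ is extended by zero outside $\Omega$, as in \eqref{eq:main_problem}.

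On $H^1_0(\Omega;\bb R^d)$ we consider
\begin{equation*}
	Q(U) = \frac{\int_\Omega |\Grad U|^2 - \int_\Omega F(U)}{\left(\int_{\bb R^n}\int_{\bb R^n}\frac{H(U(x))H(U(y))}{|x-y|^\mu}\,\d x\,\d y\right)^{1/2^*_\mu}},
	\qquad S_{F,H} = \inf_{U\neq 0} Q(U).
\end{equation*}
\emph{Step 1 (setup).} Since $F(s)\le M_F|s|^2$ and $M_F<\lambda_1$, the numerator is bounded below by $(1-M_F/\lambda_1)\|\Grad U\|_2^2$, so it is coercive. Since $0<H(s)\le M_H|s|^{2^*_\mu}$, the Hardy--Littlewood--Sobolev inequality bounds the denominator by $C\|U\|_{2^*}^{2}$. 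Hence $S_{F,H}>0$ and $Q$ is well defined.

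\emph{Step 2 (a minimizer solves the problem).} Homogeneity gives $\langle\Grad H(s),s\rangle = 2^*_\mu H(s)$. So a minimizer $U$, after multiplication by a suitable positive constant (possible because the numerator and denominator have different homogeneity degrees), is a weak solution of \eqref{eq:main_problem} with the normalizations in \eqref{eq:the_gradients}.

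\emph{Step 3 (the compactness threshold).} Let $S_{H,L}$ be the best constant in the scalar Hardy--Littlewood--Sobolev/Sobolev inequality $\|\Grad u\|_2^2 \ge S_{H,L}\left(\int\int |u|^{2^*_\mu}|u|^{2^*_\mu}|x-y|^{-\mu}\right)^{1/2^*_\mu}$, which is attained by Aubin--Talenti bubbles (Lieb; see \cite{GaoYang2018}). We aim to prove that the infimum is attained whenever
\begin{equation*}
	S_{F,H} < M_H^{-2/2^*_\mu} S_{H,L}.
\end{equation*}
Take a minimizing sequence $U_k$ normalized so the denominator equals $1$. It is bounded in $H^1_0$, so we may pass to a subsequence with $U_k\rightharpoonup U$ weakly, strongly in $L^2$, and a.e. Then $\int F(U_k)\to\int F(U)$, since $F$ is continuous and $2$-homogeneous. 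For the nonlocal term we use the Brezis--Lieb lemma for the double integral (as in Gao--Yang), applied to $H$. This requires a Brezis--Lieb splitting for $H(U_k)$ with $H$ merely $C^1$ and homogeneous; it holds because $|H(a+b)-H(a)|\le C(|a|^{2^*_\mu-1}|b|+|b|^{2^*_\mu})$. We then get $1 = D(U) + D(U_k-U) + o(1)$, where $D$ denotes the denominator. Also $D(V)\le M_H^2 D_{\mathrm{scalar}}(|V|)$, so $D(V_k)^{1/2^*_\mu} \le M_H^{2/2^*_\mu} S_{H,L}^{-1}\|\Grad V_k\|^2$ (using $|\Grad|V||\le|\Grad V|$). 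Combining these with the gradient splitting $\|\Grad U_k\|^2=\|\Grad U\|^2+\|\Grad V_k\|^2+o(1)$ and the concavity of $t\mapsto t^{1/2^*_\mu}$, the strict inequality forces $D(U)=1$, so $U$ is a minimizer. This step is the main technical obstacle: the Brezis--Lieb lemma must be adapted to the nonlocal, vector-valued, non-power $H$.

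\emph{Step 4 (test functions).} Take $\theta$ as in the statement, and set $U_\epsilon=\varphi\,u_\epsilon\,\theta$, where $u_\epsilon$ is the bubble $\epsilon^{(n-2)/2}(\epsilon^2+|x-x_0|^2)^{-(n-2)/2}$ and $\varphi$ is a cutoff near $x_0\in\Omega$. Then $H(U_\epsilon)=M_H(\varphi u_\epsilon)^{2^*_\mu}$ and $F(U_\epsilon)=F(\theta)(\varphi u_\epsilon)^2$. The standard estimates of Brezis--Nirenberg and Gao--Yang then give
\begin{equation*}
	Q(U_\epsilon) \le M_H^{-2/2^*_\mu}\left(S_{H,L} + O(\epsilon^{n-2}) - F(\theta)\, c\,\epsilon^2|\log\epsilon|_{n=4} \text{ or } c\,\epsilon^2\right)
\end{equation*}
(the $\epsilon^2|\log\epsilon|$ term for $n=4$, the $c\,\epsilon^2$ term for $n\ge5$), together with a lower bound on the nonlocal denominator of the form $C\,S_{H,L}^{\dots}-O(\epsilon^{n})$. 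Because $F(\theta)>0$ and $n\ge4$, the negative term dominates for small $\epsilon$. This yields the strict inequality of Step 3, and hence a nontrivial weak solution.
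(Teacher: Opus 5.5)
Your proposal is correct and follows essentially the same route as the paper. The shared steps are coercivity from $M_F<\lambda_1$, a Brezis--Lieb splitting of the nonlocal term giving compactness strictly below the level $M_H^{-2/2^*_\mu}$ times the scalar Choquard--Sobolev constant (your $S_{H,L}$ is the reciprocal of the paper's $\mathcal S_{HL}$), and the cut-off bubbles $\varphi u_\epsilon\theta$, whose $F(\theta)>0$ term dominates for $n\geq 4$.

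The only minor deviation is how you get the vector-valued inequality. You apply the scalar inequality to $|V|$, using $H(V)\le M_H|V|^{2^*_\mu}$ and $|\nabla|V||\le|\nabla V|$, whereas the paper chains H\"older, HLS, Minkowski and Sobolev together with $\mathcal S_{HL}=\mathcal S\mathcal H^{1/2^*_\mu}$. Both give the same constant. Your $O(\epsilon^{n})$ error for the nonlocal term is sharper than the paper's $O(\epsilon^{(2n-\mu)/2})$, though either suffices.
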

As mentioned above, problem \eqref{eq:main_problem} is a generalization of problem \eqref{eq:BN_choquard}. Naturally, the existence result in Theorem \ref{theorem:existence} generalizes the analogous existence result in \cite{GaoYang2018} (see Theorem 1.4 (i) in the case $\lambda\in (0,\lambda_1)$ in \cite{GaoYang2018}). In fact, our existence result goes beyond simply extending the existence result of \cite{GaoYang2018} to the vector-valued setting. Indeed, in Theorem \ref{theorem:existence}, we allow the possibility that $F$ be negative somewhere as long as it is positive at some maximizer of $H|_{\bb S^{d - 1}}$. Our next result is a nonexistence result. 
\begin{theorem}
\label{theorem:nonexistence}
Let $n\geq 3$, let $\Omega\subset \bb R^n$ be a bounded open set with $\bdy\Omega \in C^{1, 1}$, and let $\mu\in (0, n)$. Suppose $F$ and $H$ satisfy \ref{item:F_homogeneous} and \ref{item:H_positively_homogeneous} respectively and that $f$ and $h$ are as in \eqref{eq:the_gradients}.  If $\Omega$ is star-shaped and $M_F< 0$ then problem \eqref{eq:main_problem} admits no nontrivial weak solutions.
\end{theorem}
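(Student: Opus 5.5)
The plan is a Pohozaev identity for the system, tested against the star-shaped geometry. Let $U\in H^1_0(\Omega;\bb R^d)$ be a weak solution. First I will upgrade regularity so that the identity can be justified. Since $2^*_\mu$-homogeneity of $H$ gives $|H(U)|\lesssim |U|^{2^*_\mu}$, the Hardy--Littlewood--Sobolev inequality shows $I_\mu[H(U)]\in L^{2n/\mu}$. Also $|h(U)|\lesssim |U|^{2^*_\mu-1}$. The term $I_\mu[H(U)]h(U)$ therefore has the structure $a(x)(1+|U|)$ with $a\in L^{n/2}$, exactly as in the Choquard Brezis--Nirenberg literature \cite{GaoYang2018}. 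A Brezis--Kato/Moser iteration then gives $U\in L^p$ for all $p<\infty$. Elliptic $W^{2,p}$ theory, using $\partial\Omega\in C^{1,1}$, then gives $U\in W^{2,p}(\Omega)$ for all $p$, and hence $U\in C^{1,\alpha}(\overline\Omega)$. This is enough to multiply the equation by $x\cdot\nabla U$ and integrate by parts.

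Next I will derive the two identities.

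\textbf{Nehari identity.} Testing with $U$ and using Euler's relations $\langle f(s),s\rangle=F(s)$ and $\langle h(s),s\rangle=H(s)$ gives
$$\int_\Omega|\nabla U|^2=\int_\Omega F(U)+D(U),\qquad D(U)=\int\!\!\int\frac{H(U(x))H(U(y))}{|x-y|^\mu}\,dx\,dy.$$

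\textbf{Pohozaev identity.} Testing with $x\cdot\nabla U$, I expect
$$\frac{n-2}{2}\int|\nabla U|^2+\frac12\int_{\partial\Omega}|\partial_\nu U|^2(x\cdot\nu)=\frac n2\int F(U)+\frac{2n-\mu}{2\cdot 2^*_\mu}D(U).$$
The local terms follow from $\langle f(U),x\cdot\nabla U\rangle=\tfrac12 x\cdot\nabla[F(U)]$, and the fact that $U=0$ on $\partial\Omega$ forces $\nabla u_i=\partial_\nu u_i\,\nu$ there. For the nonlocal term, $\langle h(U),x\cdot\nabla U\rangle=\frac1{2^*_\mu}x\cdot\nabla[H(U)]$. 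Writing
$$\int I_\mu[H(U)]\,x\cdot\nabla H(U)$$
in symmetric form, $x\cdot\nabla_x+y\cdot\nabla_y$ acting on the kernel $|x-y|^{-\mu}$ gives $-\mu$. Integrating by parts in both variables then yields the coefficient $-(2n-\mu)/2$ times $D(U)$. I expect this nonlocal computation to be the main technical step. To justify it I will integrate by parts on the approximate domains $\{|x-y|>\varepsilon\}$, or work with mollified $H(U)$, and control the limit using HLS and the $C^1$ regularity.

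Finally I will combine the two. Since $2^*_\mu=\frac{2n-\mu}{n-2}$, the nonlocal coefficient equals $\frac{n-2}{2}$. Subtracting $\frac{n-2}{2}$ times the Nehari identity eliminates both the gradient term and $D(U)$, leaving
$$\frac12\int_{\partial\Omega}|\partial_\nu U|^2(x\cdot\nu)=\int_\Omega F(U).$$
Star-shapedness (with respect to the origin after translation) gives $x\cdot\nu\ge0$, so the left side is nonnegative. On the other hand, $F(U)\le M_F|U|^2$ with $M_F<0$, so the right side is $\le M_F\int|U|^2\le0$. Both sides must therefore vanish, which forces $\int|U|^2=0$, i.e.\ $U\equiv0$.
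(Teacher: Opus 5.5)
Your Nehari and Pohozaev identities and the final combination are correct, and they match the paper's proof. The paper obtains the nonlocal term $-\tfrac{2n-\mu}{2}D(U)$ by a dilation argument applied to $v=(H\circ U)\chi_\Omega$, which lies in $C^1_c(\mathbb R^n)$ because $\Grad H(0)=0$; it then uses $\lb x,\nu\rb\ge 0$ and $F(U)\le M_F|U|^2$ exactly as you do.

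The gap is in the regularity step, on which the legitimacy of the Pohozaev identity rests. Your claim that $|I_\mu[H(U)]h(U)|\le a(x)(1+|U|)$ with $a\in L^{n/2}$ holds only for $\mu\le 4$. In that range you can take $a=C\,I_\mu[H(U)]\,|U|^{2^*_\mu-2}$, and H\"older gives $\frac{\mu}{2n}+\frac{4-\mu}{2n}=\frac 2n$.

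The theorem, however, allows $\mu\in(4,n)$ when $n\ge 5$. There $2^*_\mu<2$, so $h$ has degree $2^*_\mu-1<1$. The best pointwise bound is then $C\,I_\mu[H(U)](1+|U|)$, with $I_\mu[H(U)]\in L^{2n/\mu}$ and $\frac{2n}{\mu}<\frac n2$. Since HLS is sharp, reaching $I_\mu[H(U)]\in L^{n/2}$ would need $H(U)\in L^{n/(n+2-\mu)}$. That means $U\in L^r$ with $r=\frac{2^*_\mu n}{n+2-\mu}>2^*$, which is precisely the improved integrability you are trying to prove.

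So the hypothesis of the local Brezis--Kato lemma is not available for $\mu>4$. The problem is genuinely critical: the naive bootstrap sends $U\in L^{2^*}$ back to $U\in L^{2^*}$.

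The missing ingredient is the nonlocal Brezis--Kato argument of Moroz and Van Schaftingen, which the paper extends to systems in Lemma~\ref{lemma:moroz_van_interpolation} and Lemma~\ref{lemma:general_integrability_boost}. It runs as follows.
\begin{enumerate}
\item Using Euler's relation $h(U)\cdot U=H(U)$, write the nonlocal term as $I_\mu[P\cdot U]Q$ with $P=Q=h(U)\in L^{2n/(n+2-\mu)}(\Omega;\mathbb R^d)$.
\item Prove the interpolation bound $\int_\Omega I_\mu[P|u|^\theta]Q|u|^{2-\theta}\le\epsilon\|u\|_{2^*}^2+C_\epsilon\|u\|_2^2$ for $\theta\in(1-\frac\mu n,1+\frac\mu n)$. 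Do this by splitting $P,Q$ into bounded parts and tails that are small in $L^{2n/(n+2-\mu)}$.
\item Test truncated powers against approximating linear problems with truncated $P,Q$. These are solvable by Lax--Milgram and their solutions converge weakly to $U$.
\end{enumerate}

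This yields only $U\in L^p$ for $p<\frac{n}{n-\mu}\cdot\frac{2n}{n-2}$, not for all $p<\infty$. That suffices, however, for $H(U)\in L^q$ with some $q>\frac{n}{n-\mu}$, hence $I_\mu[H(U)]\in L^\infty(\Omega)$. The system then has a bounded coefficient, and a standard local bootstrap using $\bdy\Omega\in C^{1,1}$ gives $U\in C^{1,\alpha}(\bar\Omega)$ (Proposition~\ref{prop:C1alpha_regularity}).

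With this replacement your argument covers all $\mu\in(0,n)$. As written, it is complete only for $\mu\le 4$, which includes all $\mu$ when $n=3,4$.
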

Theorem \ref{theorem:nonexistence} will be proven with the aid of an identity of Pohozaev type established in \cite{PucciSerrin1986, Degiovanni2003} that holds for weak solutions to \eqref{eq:main_problem} possessing $C^1$ regularity up to the boundary of $\Omega$. The $C^1$-regularity of weak solutions to problem \eqref{eq:main_problem} will be established by extending an argument of Moroz and Van Schaftingen \cite{MorozVanSchaftingen2015} to the vector-valued setting. 

This paper is organized as follows. In Section \ref{s:preliminaries} we discuss some preliminary notions including notational conventions, regularity of solutions to problem \eqref{eq:main_problem}, the functional space to be used in the variational argument, and a vector-valued eigenproblem involving $\Grad F$. In Section \ref{s:sobolev_inequalities} we derive a sharp Sobolev-type inequality. The sharp constant in this inequality will be used in Section \ref{s:sufficient_condition_for_minimizer} to express a sufficient condition for the existence of a nontrivial solution to problem \eqref{eq:main_problem}. This condition is expressed as a smallness threshold on the infimum of a suitably constrained energy functional. In Section \ref{s:existence} we prove Theorem \ref{theorem:existence} by verifying that this threshold is met. In Section \ref{s:non_existence} we prove Theorem \ref{theorem:nonexistence} by using a variant of the well-known Pohozaev identity. Section \ref{s:appendix} is an appendix where we provide details of some computations that may be useful to some readers but whose inclusion in the main body of the manuscript would detract from the presentation.
\section{Preliminaries}
\label{s:preliminaries}
The Hardy-Littlewood-Sobolev inequality will play a central role in our analysis. For brevity we refer to this inequality as the \emph{HLS inequality}. 
\begin{oldtheorem}[HLS inequality]
\label{theorem:hls_inequality}
For every $n\geq 1$ and every $p, q\in (1, \infty)$ satisfying $\frac 1q = \frac 1p - \frac{n - \mu}n$ there is an optimal constant $\mc H(n, \mu, p)>0$ such that the inequality 
\begin{equation*}
	\|I_\mu[f]\|_{L^q(\bb R^n)}\leq \mc H(n, \mu, p)\|f\|_{L^p(\bb R^n)} 
\end{equation*}
holds for all $f\in L^p(\bb R^n)$. 
\end{oldtheorem}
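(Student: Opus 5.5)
The plan is Hedberg's pointwise estimate: bound $I_\mu[f]$ pointwise by a power of the Hardy--Littlewood maximal function $\mathcal M f$ times a power of $\|f\|_{L^p}$, then use the $L^p$-boundedness of $\mathcal M$ for $p>1$. First I will record what the exponent relation means. Rewriting $\frac1q=\frac1p-\frac{n-\mu}{n}$ as $\frac1q = \frac1p + \frac{\mu}{n} - 1$, the requirement $q\in(1,\infty)$ gives $\frac1q>0$, i.e. $\mu p' > n$ with $p'=p/(p-1)$. The requirement $q>p$ is equivalent to $\mu<n$. It suffices to treat $f\ge 0$, since $|I_\mu[f]|\le I_\mu[|f|]$.

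Fix $x$ and a radius $R>0$, and split the integral defining $I_\mu[f](x)$ into $|x-y|<R$ and $|x-y|\ge R$.

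\emph{Near part.} Decompose $\{|x-y|<R\}$ into dyadic annuli $2^{-k-1}R\le |x-y|<2^{-k}R$. On each annulus the kernel is at most $(2^{-k-1}R)^{-\mu}$, and the integral of $f$ over the ball of radius $2^{-k}R$ is at most $C(2^{-k}R)^n\mathcal M f(x)$. Summing the geometric series, which converges because $n-\mu>0$, gives $C R^{n-\mu}\mathcal Mf(x)$.

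\emph{Far part.} By H\"older's inequality it is at most
\[
\|f\|_{L^p}\Big(\int_{|z|\ge R}|z|^{-\mu p'}\,\mathrm dz\Big)^{1/p'} = C R^{\,n/p'-\mu}\|f\|_{L^p}.
\]
The integral is finite precisely because $\mu p'>n$.

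Next I optimize in $R$ by choosing $R^{n/p}=\|f\|_{L^p}/\mathcal Mf(x)$, which balances the two terms. This choice is valid when $0<\mathcal Mf(x)<\infty$; the degenerate cases are trivial, since $\mathcal Mf(x)=0$ forces $f=0$ a.e. Using the exponent relation, $1-\frac{p(n-\mu)}{n} = \frac pq$, so the choice yields
\[
I_\mu[f](x)\le C\,(\mathcal Mf(x))^{p/q}\,\|f\|_{L^p}^{1-p/q}.
\]

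Raising to the power $q$ and integrating gives $\|I_\mu f\|_{L^q}^q\le C\|f\|_{L^p}^{q-p}\|\mathcal Mf\|_{L^p}^p$. By the Hardy--Littlewood maximal theorem ($p>1$), $\|\mathcal Mf\|_{L^p}\le C_p\|f\|_{L^p}$, so $\|I_\mu f\|_{L^q}\le C\|f\|_{L^p}$.

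Finally, define the optimal constant as $\mathcal H(n,\mu,p)=\sup\{\|I_\mu f\|_{L^q}:\|f\|_{L^p}=1\}$. The bound just proved shows it is finite. It is positive because $I_\mu[\chi_{B_1}]>0$ everywhere. Thus this supremum is the optimal constant, and the inequality holds with it.

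The only delicate step is the far-part H\"older estimate. It is where the hypothesis $q<\infty$ (equivalently $\mu p'>n$) is used, and I would check it carefully; everything else is routine.
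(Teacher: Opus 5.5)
Your proof is correct. The paper does not prove this statement: it quotes the Hardy--Littlewood--Sobolev inequality as a classical theorem, so there is no internal argument to compare yours with. Your Hedberg-type route is a complete, self-contained proof of what is stated, and each step checks:
\begin{itemize}
\item the dyadic near-part bound $C R^{n-\mu}\mathcal Mf(x)$, which uses $\mu<n$;
\item the H\"older far-part bound $C R^{n/p'-\mu}\|f\|_{L^p}$, finite exactly when $\mu p'>n$, i.e.\ $q<\infty$;
\item the balancing choice $R^{n/p}=\|f\|_{L^p}/\mathcal Mf(x)$ together with $1-\frac{p(n-\mu)}{n}=\frac pq$;
\item the maximal theorem for $p>1$;
\item the definition of $\mc H(n,\mu,p)$ as the operator norm, positive by testing on $\chi_{B_1}$.
\end{itemize}
A side benefit is that the pointwise bound shows $I_\mu[|f|]<\infty$ a.e., so $I_\mu[f]$ is well defined for $f\in L^p$.

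It is worth knowing what the citation gives that your argument does not. Hedberg's method produces only a non-sharp constant, so it establishes that the optimal constant exists but says nothing about its value or about extremals. That suffices for this statement. The paper, however, later relies on more in the diagonal case $p=\frac{2n}{2n-\mu}$, $q=\frac{2n}{\mu}$. Both the identity $\mc S_{HL}=\mc S\mc H^{1/2^*_\mu}$ and the fact that $u_{x_0,\epsilon}$ attains $\mc S_{HL}^{-1}$ are imported from Gao--Yang, and those rest on Lieb's sharp HLS inequality: the explicit constant and the classification of optimizers. No maximal-function argument can supply that.

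One small point of presentation. The hypotheses $p,q\in(1,\infty)$ force $\mu>0$, but they do not force $\mu<n$, which your geometric series needs. For $\mu=n$ the relation gives $q=p$, and the inequality fails because $I_n[\chi_{B_1}]\equiv\infty$ on $B_1$. So you should state $\mu\in(0,n)$ as the standing assumption it is in the paper, rather than present it as equivalent to a requirement $q>p$ that the statement does not impose.
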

The following lemma lists some basic properties of homogeneous functions that will be used throughout the manuscript. Since the proof is straight-forward we omit the details.
\begin{lemma}[Properties of homogeneous functions]
\label{lemma:properties_homogeneous_functions}
If $H:\bb R^d\to \bb R$ is homogeneous of degree $q>0$ then the following properties hold: 
\begin{enumerate}[label = {\bf (\alph*)}, ref = {\bf(\alph*)}]
	\item \label{item:homogeneous_zero_zero} $H(0) = 0$. 
	\item If $H\in C^1(\bb R^d)$ then for any $j\in\{1, \ldots, d\}$, the partial derivative $\partial_jH\in C^0(\bb R^d)$ is homogeneous of degree $q- 1$. \label{item:derivatives_homogeneous}
	\item If $H\in C^1(\bb R^d)$ then for every $s\in \bb R^d$ we have $\lb s, \Grad H(s)\rb = qH(s)$. \label{item:homogeneous_grad_dot}
	\item \label{item:homogeneous_upper_bound} If $H\in C^0(\bb R^d)$ then there is $s\in \bb S^{d - 1}$ such that $H(s) = M_H= \max\{H(t): t\in \bb S^{d -1}\}$. Moreover, the inequality $H(s) \leq M_H|s|^q$ holds for all $s\in \bb R^d$. 
\end{enumerate}
\end{lemma}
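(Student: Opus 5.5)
Each of the four properties follows directly from the defining identity $H(\varrho s) = \varrho^q H(s)$, so I will prove them in the order listed.

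For \ref{item:homogeneous_zero_zero}, I take $s = 0$ and any $\varrho > 0$ with $\varrho \neq 1$, say $\varrho = 2$. Then $H(0) = 2^q H(0)$, and since $q > 0$ we have $2^q \neq 1$, which forces $H(0) = 0$.

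For \ref{item:derivatives_homogeneous}, I fix $\varrho > 0$ and differentiate both sides of $H(\varrho s) = \varrho^q H(s)$ with respect to $s_j$. The chain rule gives $\varrho\,\partial_j H(\varrho s) = \varrho^q \partial_j H(s)$, so $\partial_j H(\varrho s) = \varrho^{q-1}\partial_j H(s)$. Continuity of $\partial_j H$ is part of the hypothesis $H \in C^1(\bb R^d)$.

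For \ref{item:homogeneous_grad_dot}, I instead fix $s$ and differentiate $\varrho \mapsto H(\varrho s) = \varrho^q H(s)$ at $\varrho = 1$. The chain rule gives $\lb \Grad H(s), s\rb$ on the left and $qH(s)$ on the right. This is the one place where some care is needed: differentiability in $\varrho$ at $\varrho = 1$ requires $H$ to be $C^1$ in a neighbourhood of $s$, which holds since $H \in C^1(\bb R^d)$. The case $s = 0$ needs no separate treatment, because both sides vanish by \ref{item:homogeneous_zero_zero}.

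For \ref{item:homogeneous_upper_bound}, the sphere $\bb S^{d-1}$ is compact and $H$ is continuous, so $H$ attains its maximum $M_H$ at some point of $\bb S^{d-1}$. For the inequality, take $s \neq 0$ and write $s = |s|\,(s/|s|)$. Homogeneity gives $H(s) = |s|^q H(s/|s|) \leq M_H |s|^q$. For $s = 0$, both sides are zero by \ref{item:homogeneous_zero_zero}.

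None of these steps presents a real obstacle.
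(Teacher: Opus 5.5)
Your proposal is correct and matches the paper's argument essentially step for step. The paper uses scaling at the origin for (a), differentiates $H(\varrho s)=\varrho^q H(s)$ in $s_j$ for (b) (writing out the difference quotient rather than citing the chain rule) and in $\varrho$ at $\varrho=1$ for (c), and uses compactness of $\bb S^{d-1}$ together with $H(s)=|s|^qH(s/|s|)$ for (d), exactly as you do.
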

\ifdetails 
{\color{details} 
\begin{proof}[Proof of Lemma \ref{lemma:properties_homogeneous_functions}]
\begin{enumerate}[label = {\bf (\alph*)}, ref = {\bf(\alph*)}]
	\item We have $H(0) = H(2^{1/q}\cdot 0) = 2H(0)$.
	\item For any $\lambda>0$ we have
	\begin{equation*}
	\begin{split}
		\partial_jH(\lambda s)
		& = \lim_{h\to 0}\frac{H(\lambda s+ he_j) - H(\lambda s)}{h}\\
		& = \lambda^q\lim_{h\to 0}\frac{H(s+ \lambda^{-1}he_j) - H(s)}{h}\\
		& = \lambda^{q - 1}\lim_{h\to 0}\frac{H(s+ \lambda^{-1}he_j) - H(s)}{\lambda^{-1}h}\\
		& = \lambda^{q - 1}\partial_j H(s). 
	\end{split}
	\end{equation*}
	\item Differentiate the equality $H(\lambda s) = \lambda^q H(s)$ with respect to $\lambda$ then evaluate at $\lambda = 1$: 
	\begin{equation*}
	\begin{split}
		qH(s)
		& = q\lambda^{q - 1}H(s)\big|_{\lambda = 1}\\
		& = \left.\frac{\d}{\d \lambda}\right|_{\lambda = 1}(\lambda^q H(s))\\
		& = \left.\frac{\d}{\d \lambda}\right|_{\lambda = 1} H(\lambda s)\\
		& = \sum_{j = 1}^d\partial_jH(\lambda s)s_j\big|_{\lambda = 1}\\
		& = \lb s, \Grad H(s)\rb. 
	\end{split}
	\end{equation*}
	\item The assertion that the maximum of $H$ over $\bb S^{d-1}$ is attained is evident from the continuity of $H$ and the compactness of $\bb S^{d- 1}$. Now for any $s\in \bb R^d\setminus\{0\}$ we have
	\begin{equation*}
		H(s)
		= |s|^qH\left(\frac s{|s|}\right)
		\leq M_H|s|^q. 
	\end{equation*}
	Since $q> 0$ then the equality $H(s)\leq M_H|s|^q$ also holds at $s = 0$. 
\end{enumerate}
\end{proof}
} 
\fi 
The following lemma relates the integrability of a vector-valued function $U$ to that of the composition of $U$ with a real-valued homogeneous function. Since its proof is a simple consequence of Lemma \ref{lemma:properties_homogeneous_functions}\ref{item:homogeneous_upper_bound}, the details are omitted.
\begin{lemma}
\label{lemma:homogeneous_integrability}
Let $\Omega\subset \bb R^n$ be a measurable set (not necessarily bounded). If $H\in C^0(\bb R^d)$ is homogeneous of degree $q>0$ and if $U\in L^p(\Omega; \bb R^d)$ for some $p\geq 1$ then $H\circ U\in L^{p/q}(\Omega)$ with $\|H(U)\|_{L^{p/q}(\Omega)}\leq M_{|H|}\|U\|_{L^p(\Omega; \bb R^d)}^q$. 
\end{lemma}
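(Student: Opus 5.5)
The plan is to reduce the statement to the pointwise bound in Lemma \ref{lemma:properties_homogeneous_functions}\ref{item:homogeneous_upper_bound}, applied to $|H|$ instead of $H$. First I will check that $|H|$ is continuous and homogeneous of degree $q$. Continuity is inherited from $H$. Homogeneity holds because $|H(\varrho s)| = |\varrho^q H(s)| = \varrho^q|H(s)|$ for every $\varrho>0$. Consequently $M_{|H|}=\max_{\bb S^{d-1}}|H|$ is finite, and for almost every $x\in\Omega$ we have
\begin{equation*}
	|H(U(x))| \leq M_{|H|}\,|U(x)|^q .
\end{equation*}
This covers the points where $U(x)=0$ as well, since $H(0)=0$ by Lemma \ref{lemma:properties_homogeneous_functions}\ref{item:homogeneous_zero_zero}.

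Next I will confirm that $H\circ U$ is measurable. This follows because $H$ is continuous and each coordinate of $U$ is measurable. I will then raise the pointwise bound to the power $p/q$ and integrate over $\Omega$:
\begin{equation*}
	\int_\Omega |H(U)|^{p/q}\,\d x \leq M_{|H|}^{p/q}\int_\Omega |U|^p\,\d x = M_{|H|}^{p/q}\,\|U\|_{L^p(\Omega;\bb R^d)}^p .
\end{equation*}
Here $\|U\|_{L^p(\Omega;\bb R^d)}$ is understood as the $L^p$ norm of the Euclidean length $|U|$. Taking the $(q/p)$-th power of both sides gives the norm inequality and also the membership $H\circ U\in L^{p/q}(\Omega)$.

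The only delicate point is interpretive. When $p<q$ the exponent $p/q$ is less than $1$, so $\|\cdot\|_{L^{p/q}}$ is only a quasi-norm. The argument above does not use the triangle inequality, however, so it goes through unchanged in that case. Boundedness of $\Omega$ is never used, which is consistent with the remark in the statement that $\Omega$ need not be bounded.
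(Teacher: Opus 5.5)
Your proposal is correct and follows the same route as the paper. You apply Lemma \ref{lemma:properties_homogeneous_functions}\ref{item:homogeneous_upper_bound} to $|H|$ to get $|H(U)|\leq M_{|H|}|U|^q$ pointwise, then raise to the power $p/q$ and integrate; your remarks on measurability, on reading $\|U\|_{L^p(\Omega;\bb R^d)}$ as the $L^p$ norm of $|U|$, and on the quasi-norm case $p<q$ are accurate.
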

\ifdetails{\color{details} 
\begin{proof}
For any such $\Omega$, $H$, and $q$ and for any $x\in \Omega$, Lemma \ref{lemma:properties_homogeneous_functions}\ref{item:homogeneous_upper_bound} applied to $|H|$ guarantees that $|H|\circ U\leq M_{|H|}|U|^q$ a.e.\ in $\Omega$. Therefore, if $p\geq 1$ and if $U\in L^p(\Omega; \bb R^d)$ then
\begin{equation*}
\begin{split}
	\int_\Omega|H(U)|^{p/q}
	& \leq M_{|H|}^{p/q}\|U\|_{L^p(\Omega; \bb R^d)}^p. 
\end{split}
\end{equation*}
\end{proof}
}\fi
The following proposition states the regularity properties of weak solutions to \eqref{eq:main_problem} under a standard smoothness assumption on $\bdy \Omega$. A proof is provided in Subsection \ref{ss:regularity}. 
\begin{prop}
\label{prop:C1alpha_regularity}
Let $n\geq 3$, let $\Omega \subset \bb R^n$ be a bounded open set with $\bdy \Omega\in C^{1, 1}$, and let $\mu\in (0, n)$. Suppose $F$ and $H$ satisfy \ref{item:F_homogeneous} and \ref{item:H_positively_homogeneous} respectively and that $f$ and $h$ are as in \eqref{eq:the_gradients}. If $U$ is a weak solution to \eqref{eq:main_problem} then there is $\alpha\in (0, 1)$ for which $U\in C^{1, \alpha}(\bar \Omega; \bb R^d)$. 
\end{prop}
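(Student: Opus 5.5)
The proof follows the scheme of Moroz and Van Schaftingen \cite{MorozVanSchaftingen2015}: first show $U\in L^p(\Omega;\bb R^d)$ for every $p<\infty$, then bootstrap to $W^{2,p}$ and use Morrey's embedding. Let $U\in H^1_0(\Omega;\bb R^d)$ be a weak solution. By Lemma \ref{lemma:properties_homogeneous_functions}\ref{item:derivatives_homogeneous} and \ref{item:homogeneous_upper_bound},
\begin{equation*}
	|f(U)|\leq C|U|, \qquad |H(U)|\leq M_H|U|^{2^*_\mu}, \qquad |h(U)|\leq C|U|^{2^*_\mu-1}.
\end{equation*}
The equation can therefore be written as the linear system
\begin{equation*}
	-\lap U = f(U) + I_\mu[K]\,h(U), \qquad K = H(U)\in L^{2n/(2n-\mu)}(\Omega).
\end{equation*}
Here $K$ is integrable by Lemma \ref{lemma:homogeneous_integrability}, since $U\in L^{2^*}$ and $2^*/2^*_\mu = 2n/(2n-\mu)$.

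For the $L^p$ estimate I would use the decomposition of Moroz and Van Schaftingen. Write $|I_\mu[H(U)]\,h(U)|\leq C\, I_\mu[|U|^{2^*_\mu}]\,|U|^{2^*_\mu-1}$. The key lemma in \cite{MorozVanSchaftingen2015} states the following. If $A,B\in L^{2n/(n-\mu+2)}+L^{2n/(n-\mu)}$ (roughly), then for every $\epsilon>0$ we can split $A = A_\epsilon^*+A_{\epsilon,*}$ and $B = B^*_\epsilon+B_{\epsilon,*}$, where the starred large parts have small norm and the remainders are bounded. Then the quadratic form $\int I_\mu[A|\varphi|]\,B|\varphi|$ is bounded by $\epsilon^2\|\nabla\varphi\|_2^2 + C_\epsilon\|\varphi\|_2^2$. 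The conclusion is that the operator $-\lap - I_\mu[A\,\cdot\,]B$ is coercive modulo lower-order terms.

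I would apply this with $A=B=|U|^{2^*_\mu-1}$ and run a truncation/Moser iteration componentwise. Test the $j$-th equation against $u_j|u_{j,T}|^{p-2}$ (truncated at level $T$) and sum over $j$. The linear term $f(U)\cdot U$ is controlled by $C|U|^2$. The nonlocal term is bounded using $|h(U)|\,|u_j|\leq C|U|^{2^*_\mu-1}|U|$ together with the quadratic form estimate above. This is the vector-valued adaptation: every coupling is dominated by $|U|$, so one works with $\sum_j$ of the componentwise quantities, and the inequalities close just as in the scalar case. The result is $U\in L^p$ for all $p\in[2,\infty)$.

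Once $U\in L^p$ for all $p$, the function $H(U)$ lies in every $L^r$ with $r<\infty$. By the HLS inequality (Theorem \ref{theorem:hls_inequality}), together with boundedness of $\Omega$ (to handle $L^\infty$ for $I_\mu$ of a function in $L^1\cap L^r$, $r>n/(n-\mu)$), we get $I_\mu[H(U)]\in L^\infty(\bb R^n)$. Hence the right side of the equation lies in $L^p(\Omega)$ for every $p<\infty$. Calderón--Zygmund estimates for the Dirichlet Laplacian on a $C^{1,1}$ domain then give $U\in W^{2,p}(\Omega;\bb R^d)$ for all $p<\infty$. Taking $p>n$ in Morrey's embedding yields $U\in C^{1,\alpha}(\bar\Omega;\bb R^d)$.

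The main obstacle is the critical-exponent integrability step. A naive bootstrap fails because $I_\mu[H(U)]h(U)$ is exactly critical, so the smallness of the $\epsilon$-splitting from \cite{MorozVanSchaftingen2015} is essential. The care needed is in verifying that their argument survives with vector-valued $U$ and general homogeneous $H,h$ in place of powers. The plan is to reduce everything to pointwise bounds in $|U|$ and to test with the vector $U|U_T|^{p-2}$ rather than componentwise.
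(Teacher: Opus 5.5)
Your overall route is the one the paper uses: a Moroz--Van Schaftingen type nonlocal $L^p$ improvement, then $I_\mu[H(U)]\in L^\infty(\Omega)$, then $W^{2,p}$ theory on the $C^{1,1}$ domain. However, the central claim of your first step is not justified: the truncation/Moser iteration does not give $U\in L^p(\Omega;\bb R^d)$ for every $p<\infty$. In the step from $p$ to $\frac{np}{n-2}$ you must apply the $\epsilon$-interpolation inequality $\int_\Omega I_\mu[P|w|^\theta]Q|w|^{2-\theta}\le \epsilon\|w\|_{2^*}^2+C_\epsilon\|w\|_2^2$ with $w=\tau_m(|U|)^{p/2}$, $\theta=\frac 2p$, and coefficients $P,Q$ known only to lie in $L^{\frac{2n}{n+2-\mu}}(\Omega)$ (your $A=B=|U|^{2^*_\mu-1}$). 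That inequality is only available for $\theta$ in a restricted range. Lemma \ref{lemma:moroz_van_interpolation} gives it for $\theta\in(1-\frac\mu n,1+\frac\mu n)$, i.e.\ for $p<\frac{2n}{n-\mu}$. The restriction is not an artifact. When $\mu<n-2$ and $\theta\le 1-\frac{\mu}{n-2}$, H\"older's inequality places the product of the small-norm part of $Q$ with $|w|^{2-\theta}$, $w\in L^{2^*}$, only in $L^t$ with $\frac1t=\frac{n+2-\mu}{2n}+\frac{(2-\theta)(n-2)}{2n}\ge 1$, so HLS cannot be applied. The iteration therefore stops at $p<\frac{n}{n-\mu}\cdot\frac{2n}{n-2}$. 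This is exactly the range in Lemma \ref{lemma:general_integrability_boost}, and also the range of the scalar result in \cite{MorozVanSchaftingen2015}; it does not ``close just as in the scalar case'' to all $p$. Your last step, which puts the right-hand side in every $L^p$, rests on this unproved claim.

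What is missing is the observation that the restricted range already removes the nonlocality. Taking $p$ close to $\frac{n}{n-\mu}\cdot\frac{2n}{n-2}$ gives $H(U)\in L^q(\Omega)$ for some $q>\frac n{n-\mu}$, so $a=I_\mu[H(U)]\in L^\infty(\Omega)$ by H\"older. Each $u_i$ then solves the local equation $-\lap u_i=f_i(U)+a(x)h_i(U)$, with $|h(U)|\le C|U|^{2^*_\mu-1}$ and subcritical exponent $2^*_\mu-1=\frac{n+2-\mu}{n-2}<\frac{n+2}{n-2}$. A finite bootstrap with $W^{2,p}$ estimates and the Sobolev embedding then yields $C^{1,\alpha}(\bar\Omega)$; in the paper one iterates $p_{\ell+1}=\frac{1}{2^*_\mu-1}\cdot\frac{np_\ell}{n-2p_\ell}$ until $p_\ell>\frac n2$, then once more to exceed $n$.

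You should also make explicit the approximation step that justifies the truncated test functions. Using Euler's identity $H(U)=h(U)\cdot U$, the paper writes the nonlocal term in linear form $I_\mu[P\cdot U]Q$ with $P=Q=h(U)$. It then truncates $P,Q$ at level $k$ and solves a coercive linear problem by Lax--Milgram to get approximants $U^{(k)}\rightharpoonup U$. Without bounded coefficients, the tail term coming from $\{|U|>m\}$ in the Moser estimate is not known to be finite or to vanish as $m\to\infty$. So pointwise bounds in $|U|$ alone do not close the estimate.
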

\subsection{The Variational Problem}
\label{ss:variational_problem}
In this subsection we introduce a constrained minimization problem whose solutions correspond to nontrivial weak solutions to problem \eqref{eq:main_problem}. We assume for the remainder of this section that $\Omega\subset\bb R^n$ is a bounded open set. By performing elementary computations involving Lemma \ref{lemma:homogeneous_integrability}, one can verify that if $F\in C^0(\bb R^d)$ is homogeneous of degree $2$ then the functional $\Phi_{F}:H_0^1(\Omega; \bb R^d)\to \bb R$ given by 
\begin{equation}
\label{eq:intro_Phi_AF}
	\Phi_{F}(U)
	= \|U\|_{H_0^1(\Omega; \bb R^d)}^2  - \int_\Omega F(U)
\end{equation}
is well-defined. Similarly, Lemma \ref{lemma:homogeneous_integrability}, H\"older's inequality, and the HLS inequality guarantee that if $H\in C^0(\bb R^d)$ is homogeneous of degree $2^*_\mu$ then the functional $\Psi:L^{2^*}(\Omega; \bb R^d)\to \bb R$ given by 
\begin{equation}
\label{eq:intro_Psi}
	\Psi(U) = \Psi_{\mu, H}(U) = \int_\Omega I_\mu[H(U)]H(U)
\end{equation}
is well-defined. If in addition, $H$ is positively $2^*_\mu$-homogeneous then using the fact that $\Psi(U)>0$ whenever $U\in L^{2^*}(\Omega; \bb R^d)\setminus\{0\}$, it is routine to verify that the constraint manifold $\mc M$ defined by 
\begin{equation}
\label{eq:the_constraint}
	\mc M 
	= \{U\in H_0^1(\Omega; \bb R^d): \Psi(U)= 1\}  
\end{equation} 
is nonempty. 
\ifdetails{\color{details} 
\begin{lemma}
\label{lemma:PsiU_positive_definite}
Let $\Omega\subset \bb R^n$ be a bounded open set and let $H\in C^0(\bb R^d)$ be positively $2^*_\mu$-homogeneous. If $U\in L^{2^*}(\Omega; \bb R^d)\setminus\{0\}$ then $\Psi(U)>0$. 
\end{lemma}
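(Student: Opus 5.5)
We need to show $\Psi(U)=\int_\Omega I_\mu[H(U)]H(U)>0$ for every $U\in L^{2^*}(\Omega;\bb R^d)\setminus\{0\}$. The plan has two steps: first show that the function $g=H\circ U$ (extended by zero outside $\Omega$) is nonnegative and not identically zero, and then show that the Riesz interaction of such a function with itself is strictly positive.

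For the first step, since $H$ is positively homogeneous of degree $2^*_\mu>0$, Lemma \ref{lemma:properties_homogeneous_functions}\ref{item:homogeneous_zero_zero} gives $H(0)=0$. Together with $H(s)>0$ for $s\neq 0$, this shows $H\geq 0$ on all of $\bb R^d$. Hence $g\geq 0$ a.e. Because $U\neq 0$ in $L^{2^*}$, the set $E=\{x\in\Omega: U(x)\neq 0\}$ has positive measure, and $g>0$ on $E$. By Lemma \ref{lemma:homogeneous_integrability} with $p=2^*$ and $q=2^*_\mu$, we get $g\in L^{2^*/2^*_\mu}(\bb R^n)$. Here $2^*/2^*_\mu=\frac{2n}{2n-\mu}$, which is exactly the exponent for which the HLS inequality (Theorem \ref{theorem:hls_inequality}) combined with H\"older's inequality makes $\int g\,I_\mu[g]$ finite. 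This is the well-definedness already noted in the text.

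For the second step, write
\begin{equation*}
	\Psi(U)=\int_{\bb R^n}\int_{\bb R^n}\frac{g(x)g(y)}{|x-y|^\mu}\,\d y\,\d x.
\end{equation*}
By Tonelli's theorem this is the integral of a nonnegative measurable function over $\bb R^n\times\bb R^n$. The integrand is strictly positive on $E\times E$: there $g(x)g(y)>0$, and $|x-y|^{-\mu}\in(0,\infty]$ off the null diagonal. Since $|E\times E|=|E|^2>0$, the integral is strictly positive, and finiteness was settled in the first step.

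The only point needing care is that the integrand is nonnegative, which is what lets us use Tonelli and conclude positivity from positivity on a set of positive measure. Nonnegativity comes from $g\geq 0$ and would fail for sign-changing $H$; in that case one would instead need the positive-definiteness of the Riesz kernel via its Fourier transform $c|\xi|^{\mu-n}$. Under the present hypothesis the elementary Tonelli argument suffices.
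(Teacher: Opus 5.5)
Your proof is correct and uses the same idea as the paper's: $H\ge 0$, so $\Psi(U)$ is the integral of a nonnegative integrand that is strictly positive on a set of positive measure. The paper makes this quantitative, restricting to a level set $\Sigma_\delta=\{x\in\Omega:|U(x)|>\delta\}$ of positive measure and using $|x-y|<R$ on the bounded domain to get $\Psi(U)\ge (\delta^{2^*_\mu}m_H|\Sigma_\delta|)^2R^{-\mu}$, whereas your Tonelli argument is qualitative and does not need $\Omega$ to be bounded.
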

\begin{proof}
Fix $U\in L^{2^*}(\Omega; \bb R^d)\setminus\{0\}$ and choose $\delta>0$ such that the set $\Sigma_\delta:= \{x\in \Omega: |U|> \delta\}$ has positive measure $\nu_\delta:= |\Sigma_\delta|$. Since $H$ is positively homogeneous $m_H>0$ so for any $y\in \Sigma_\delta$ we have
\begin{equation}
\label{eq:HcomposeU_lower}
	H\circ U(y)
	= |U(y)|^{2^*_\mu}H\left(\frac{U(y)}{|U(y)|}\right)
	> \delta^{2^*_\mu}m_H. 
\end{equation}
Fix $R>0$ for which $\Omega\subset B_{R/2}$. For any $x\in \Omega$, using \eqref{eq:HcomposeU_lower} we have
\begin{equation*}
\begin{split}
	I_\mu[H(U)](x)
	& = \int_\Omega\frac{H(U(x))}{|x- y|^\mu}\; \d y\\
	& \geq\int_{\Sigma_\delta}\frac{H(U(x))}{|x- y|^\mu}\; \d y\\
	& \geq \frac{\delta^{2^*_\mu} m_H \nu_\delta}{R^\mu}. 
\end{split}
\end{equation*}
Using this estimate and using \eqref{eq:HcomposeU_lower} once more we have
\begin{equation*}
	\int_\Omega I_\mu[H(U)]H(U)
	\geq \frac{\delta^{2^*_\mu}m_H\nu_\delta}{R^\mu}\int_{\Sigma_\delta}H(U)
	\geq \frac{(\delta^{2^*_\mu}m_H\nu_\delta)^2}{R^\mu}
	> 0. 
\end{equation*}
\end{proof}
Now to verify that $\mc M\neq\emptyset$, fix any nonzero $U\in H_0^1(\Omega; \bb R^d)\subset L^{2^*}(\Omega;\bb R^d)$. Lemma \ref{lemma:PsiU_positive_definite} guarantees that $c = \Psi(U)^{-\frac{1}{2\cdot 2^*_\mu}}$ is well-defined and positive. For such $c$ and $U$ we have $\Psi(cU)\in \mc M$. 
}\fi
If $F$ and $H$ satisfy \ref{item:F_homogeneous} and \ref{item:H_positively_homogeneous} respectively then up to a positive constant multiple, every critical point of $\Phi_{F}$ subject to the constraint $\mc M$ is a weak solution to problem \eqref{eq:main_problem}. 
\ifdetails{\color{details} 
The verification of this fact is carried out in Lemma \ref{lemma:correct_minimization_problem}. 
}
\fi 
Moreover, if $\Phi_{F}$ is coercive then the restriction $\Phi_{F}\big|_{\mc M}$ is bounded below so we may seek minimizing critical points.

In what follows we formulate, under suitable  hypotheses on $F$, an equivalent condition for the coercivity of $\Phi_{F}$. A simple construction shows that if $F\in C^0(\bb R^d)$ is a degree-$2$ homogeneous function for which $M_F>0$, then there is $U\in H_0^1(\Omega; \bb R^d)$ for which $\int_\Omega F(U)> 0$, see Lemma \ref{lemma:simple_construction} in the appendix for details. For any such $F$, and for any $U\in H_0^1(\Omega; \bb R^d)$ for which $\int_\Omega F(U)> 0$, Lemma \ref{lemma:properties_homogeneous_functions} \ref{item:homogeneous_upper_bound} and the variational characterization of the first Dirichlet eigenvalue $\lambda_1$ for $-\lap$ on $\Omega$ gives
\begin{equation*}
	\int_\Omega F(U)
	\leq M_F\int_\Omega|U|^2
	\leq \lambda_1^{-1}M_F\|U\|_{H_0^1(\Omega; \bb R^d)}^2.  
\end{equation*}
\ifdetails{\color{details}
Here is the detailed version of this inequality: 
\begin{equation*}
\begin{split}
	\int_\Omega F(U)
	& \leq M_F\int_\Omega|U|^2\\
	& = M_F\sum_{j = 1}^d\int_\Omega |u_j|^2\\
	& \leq \lambda_1^{-1}M_F\sum_{j = 1}^d \int_\Omega |\Grad u_j|^2\\
	&  = \lambda_1^{-1}M_F\|U\|_{H_0^1(\Omega; \bb R^d)}^2.  
\end{split}
\end{equation*}
}\fi%
In view of this inequality, the quantity
\begin{equation}
\label{eq:lambda1_FA}
\begin{split}
	\lambda_{1, F}
	& = \inf\left\{\frac{\|U\|_{H_0^1(\Omega; \bb R^d)}^2}{\int_\Omega F(u)}: U\in H_0^1(\Omega ;\bb R^d) \text{ and }\int_\Omega F(U)> 0\right\}\\
	& = \inf\{\|U\|_{H_0^1(\Omega; \bb R^d)}^2: U\in H_0^1(\Omega; \bb R^d) \text{ and }\int_\Omega F(U) = 1\}
\end{split}
\end{equation}
is well-defined and positive. The following lemma characterizes $\lambda_{1, F}$ as the smallest eigenvalue for a suitable eigenproblem. Since the proof follows from a routine argument involving Rellich's Theorem, we omit the details. 
\begin{lemma}
Let $\Omega \subset \bb R^n$ be a bounded open set. If $F$ satisfies both \ref{item:F_homogeneous} and $M_F>0$, then  $\lambda_{1, F}$ as defined in \eqref{eq:lambda1_FA} is the smallest positive eigenvalue for the problem 
\begin{equation}
\label{eq:LAp_f_eigenproblem}
\begin{cases}
	-\lap U = \lambda f(U) & \text{ in }\Omega\\
	U = 0 & \text{ on }\bdy\Omega, 
\end{cases}
\end{equation}
where $f = \frac{\Grad F}{2}$. 
\end{lemma}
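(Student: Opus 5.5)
We prove the lemma with the direct method on the second formulation in \eqref{eq:lambda1_FA}, then read off the eigenvalue property from the Euler--Lagrange equation. Throughout, write $\mathcal N=\{U\in H_0^1(\Omega;\bb R^d): \int_\Omega F(U)=1\}$. This set is nonempty: Lemma \ref{lemma:simple_construction} gives some $U$ with $\int_\Omega F(U)>0$, and by $2$-homogeneity a positive multiple of $U$ lies in $\mathcal N$. The first step is to show that the two infima in \eqref{eq:lambda1_FA} agree. If $\int_\Omega F(U)>0$, set $V=(\int_\Omega F(U))^{-1/2}U\in\mathcal N$. The Rayleigh quotient of $U$ is then unchanged, because both the numerator and $\int_\Omega F$ scale quadratically.

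The second step is to show the infimum is attained. Take a minimizing sequence $U_k\in\mathcal N$. It is bounded in $H_0^1$, so after passing to a subsequence we have $U_k\rightharpoonup U$ weakly in $H_0^1$, and by Rellich's theorem $U_k\to U$ strongly in $L^2(\Omega;\bb R^d)$. Continuity of $U\mapsto\int_\Omega F(U)$ on $L^2$ is where the analysis sits; we expect this to be the main (though mild) obstacle. We use the bound $|F(s)-F(t)|\le C(|s|+|t|)|s-t|$, obtained from Lemma \ref{lemma:properties_homogeneous_functions}\ref{item:derivatives_homogeneous} since $\Grad F$ is $1$-homogeneous and hence $|\Grad F(s)|\le C|s|$. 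Combined with Cauchy--Schwarz, this bound gives $\int_\Omega F(U)=\lim_k\int_\Omega F(U_k)=1$. Weak lower semicontinuity of the norm then gives $\|U\|^2\le\lambda_{1,F}$, so $U$ is a minimizer.

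The third step derives the eigenvalue equation. The map $G(V)=\int_\Omega F(V)$ is $C^1$ on $H_0^1$, with derivative $G'(U)V=\int_\Omega\langle\Grad F(U),V\rangle$; this again uses the linear growth of $\Grad F$ and dominated convergence. By Lemma \ref{lemma:properties_homogeneous_functions}\ref{item:homogeneous_grad_dot}, $G'(U)U=2G(U)=2\neq0$, so $\mathcal N$ is a $C^1$ manifold near $U$ and the Lagrange multiplier rule applies. It yields
\[
2\int_\Omega\langle \Grad U,\Grad V\rangle=\beta\int_\Omega\langle\Grad F(U),V\rangle \quad\text{for all } V\in H_0^1(\Omega;\bb R^d).
\]
Testing with $V=U$ gives $2\lambda_{1,F}=2\beta$, so $\beta=\lambda_{1,F}$. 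Since $\Grad F=2f$, the displayed identity reads $\int_\Omega\langle\Grad U,\Grad V\rangle=\lambda_{1,F}\int_\Omega\langle f(U),V\rangle$. Thus $U$ is a weak solution of \eqref{eq:LAp_f_eigenproblem} with $\lambda=\lambda_{1,F}$.

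The final step shows minimality. Suppose $\lambda>0$ is an eigenvalue with a nontrivial weak eigenfunction $W$. Testing the equation with $W$ and using Lemma \ref{lemma:properties_homogeneous_functions}\ref{item:homogeneous_grad_dot} gives $\|W\|^2=\lambda\int_\Omega\langle f(W),W\rangle=\lambda\int_\Omega F(W)$. Since $\lambda>0$ and $W\neq0$, this forces $\int_\Omega F(W)>0$. Hence $W$ is admissible in the first formulation of \eqref{eq:lambda1_FA}, and $\lambda=\|W\|^2/\int_\Omega F(W)\ge\lambda_{1,F}$. Therefore $\lambda_{1,F}$ is the smallest positive eigenvalue.
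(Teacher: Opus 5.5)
Your proposal is correct and follows essentially the same route as the paper: a minimizing sequence on $\{\int_\Omega F(U)=1\}$, Rellich compactness to pass to the limit in $\int_\Omega F(U^k)$, weak lower semicontinuity to get a minimizer, the Euler--Lagrange equation, and finally testing an arbitrary eigenfunction against itself to obtain $\lambda\geq\lambda_{1,F}$. The differences are minor: you pass to the limit using the bound $|F(s)-F(t)|\le C(|s|+|t|)|s-t|$ (which uses $F\in C^1$), whereas the paper uses a.e.\ convergence with the generalized dominated convergence theorem, and you carry out the Lagrange multiplier identification $\beta=\lambda_{1,F}$ explicitly where the paper only asserts it.
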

\ifdetails {\color{details} 
\begin{proof}
First we show that that $\lambda_{1, F}$ is a positive eigenvalue. Accordingly, let $(U^k)_{k = 1}^\infty\subset H_0^1(\Omega; \bb R^d)$ be a minimizing sequence for $\lambda_{1, F}$ for which $\int_\Omega F(U^k) = 1$ for all $k$. Evidently $(U^k)_{k = 1}^\infty$ is bounded in $H_0^1(\Omega; \bb R^d)$ so the reflexivity of $H_0^1(\Omega; \bb R^d)$ guarantees the existence of $U \in H_0^1(\Omega; \bb R^d)$ and a subsequence of $(U^k)_{k = 1}^\infty$ along which $U^k\weakconv U$ weakly in $H_0^1(\Omega; \bb R^d)$. By Rellich's Theorem, after passing to a further subsequence if necessary we may assume in addition that $U^k\to U$ in $L^2(\Omega; \bb R^d)$ and that $U^k\to U$ a.e.\ in $\Omega$. The continuity of $F$ ensures that $F(U^k)\to F(U)$ a.e.\ in $\Omega$. Moreover, using Lemma \ref{lemma:properties_homogeneous_functions} \ref{item:homogeneous_upper_bound} and the $L^2$-convergence $U^k\to U$ we have 
\begin{equation*}
	\int_\Omega F(U^k)
	\leq M_F\sum_{j = 1}^d\int_\Omega |u_j^k|^2
	\to M_F\int_\Omega |U|^2, 
\end{equation*}
so the generalized Dominated Convergence Theorem guarantees that 
\begin{equation*}
	\lim_{k\to\infty}\int_\Omega F(U^k)
	= \int_\Omega F(U). 
\end{equation*}
In particular $\int_\Omega F(U) = 1$ and $U\not\equiv 0$. Finally, by the weak lower semicontinuity of the norm $\|\cdot \|_{H_0^1(\Omega; \bb R^d)}$, 
\begin{equation*}
	\frac{\|U\|_{H_0^1(\Omega; \bb R^d)}^2}{\int_\Omega F(U)}
	= \|U\|_{H_0^1(\Omega; \bb R^d)}^2
	\leq \liminf_k\|U^k\|_{H_0^1(\Omega; \bb R^d)}^2
	= \lambda_{1, F}, 
\end{equation*}
so $U$ attains the infimum in \eqref{eq:lambda1_FA}. Since \eqref{eq:LAp_f_eigenproblem} is the Euler-Lagrange equation for a minimizer, we conclude that $U$ is an eigenfunction and that $\lambda_{1, F}> 0$. 

Next, we show that if $\lambda>0$ is a positive eigenvalue for problem \eqref{eq:LAp_f_eigenproblem} then $\lambda \geq \lambda_{1, F}$. Accordingly, let $\lambda>0$ be an eigenvalue and let $U\in H_0^1(\Omega; \bb R^d)$ be a corresponding eigenfunction. Testing the $j^{\text{th}}$ equation of \eqref{eq:LAp_f_eigenproblem} by $u_j$ then summing the resulting equations over $j$ and using the assumption $2f = \Grad F$ gives
\begin{equation*}
\begin{split}
	\|U\|_{H_0^1(\Omega; \bb R^d)}^2
	& = \int_\Omega |\Grad U|^2\\
	& = \lambda\sum_{j = 1}^d\int_\Omega f_j(U)u_j\\
	& = \frac{\lambda}2\sum_{j = 1}^d\int_\Omega(\partial_jF)(U)u_j\\
	& = \frac{\lambda}2\int_\Omega\langle(\Grad F)(U), U\rangle\\
	& = \lambda\int_{\Omega}F(U), 
\end{split}
\end{equation*}
where the final equality holds by item \ref{item:homogeneous_grad_dot} of Lemma \ref{lemma:properties_homogeneous_functions}. In view of the positivity of $\lambda$ and since $U$ is non-trivial (and thus $\|U\|_{H_0^1(\Omega; \bb R^d)}> 0$), this computation shows that $\int_\Omega F(U)> 0$ and thus
\begin{equation*}
	\lambda 
	= \frac{\|U\|_{H_0^1(\Omega; \bb R^d)}^2}{\int_\Omega F(U)}
	\geq \lambda_{1, F}. 
\end{equation*}
\end{proof}
}\fi
The following lemma relates the values of $\lambda_1$ and $\lambda_{1, F}$. 
\begin{lemma}
\label{lemma:relate_lambda1_to_lambda1F}
Let $\Omega\subset \bb R^n$ be a bounded open set. If $F\in C^0(\bb R^d)$ is homogeneous of degree two and satisfies $M_F>0$ then the first eigenvalue $\lambda_{1, F}$ defined in \eqref{eq:lambda1_FA} satisfies
\begin{equation*}
	M_F\lambda_{1, F} = \lambda_1,  
\end{equation*}
where $\lambda_1 = \lambda_1(-\lap)>0$ is the first eigenvalue of the Dirichlet Laplacian on $\Omega$. 
\end{lemma}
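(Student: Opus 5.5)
We prove the two inequalities $M_F\lambda_{1,F}\geq \lambda_1$ and $M_F\lambda_{1,F}\leq \lambda_1$ separately, working with the first (Rayleigh-quotient) form of the definition \eqref{eq:lambda1_FA}.

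\emph{Lower bound.} Let $U\in H_0^1(\Omega;\bb R^d)$ satisfy $\int_\Omega F(U)>0$. The inequality displayed just before \eqref{eq:lambda1_FA} gives
\[
\int_\Omega F(U)\leq \lambda_1^{-1}M_F\|U\|^2_{H_0^1(\Omega;\bb R^d)} .
\]
This inequality comes from Lemma \ref{lemma:properties_homogeneous_functions}\ref{item:homogeneous_upper_bound} together with the scalar Poincaré inequality $\lambda_1\int_\Omega u_j^2\leq\int_\Omega|\Grad u_j|^2$, summed over the coordinates $j$. Rearranging, the quotient $\|U\|^2_{H_0^1}/\int_\Omega F(U)$ is at least $\lambda_1/M_F$. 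Taking the infimum over all admissible $U$ gives $\lambda_{1,F}\geq \lambda_1/M_F$.

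\emph{Upper bound.} Here we build explicit test functions. By Lemma \ref{lemma:properties_homogeneous_functions}\ref{item:homogeneous_upper_bound}, choose $\theta\in\bb S^{d-1}$ with $F(\theta)=M_F>0$. For any nonzero scalar $\varphi\in H_0^1(\Omega)$, set $U=\varphi\,\theta$. Degree-$2$ homogeneity gives $F(\varphi(x)\theta)=|\varphi(x)|^2F(\theta)$ at each point. This needs some care where $\varphi(x)<0$, since homogeneity is only assumed for positive scalars. To avoid the issue, we take $\varphi\geq 0$, which is allowed: $|\varphi|\in H_0^1(\Omega)$ has the same Rayleigh quotient as $\varphi$, and the first Dirichlet eigenfunction can be taken nonnegative anyway. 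Where $\varphi(x)=0$ we use $F(0)=0$, which is Lemma \ref{lemma:properties_homogeneous_functions}\ref{item:homogeneous_zero_zero}.

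With this choice, $\int_\Omega F(U)=M_F\int_\Omega\varphi^2>0$. Since $|\theta|=1$, we also have $\|U\|^2_{H_0^1(\Omega;\bb R^d)}=\int_\Omega|\Grad\varphi|^2$. Hence
\[
\lambda_{1,F}\leq \frac{1}{M_F}\cdot\frac{\int_\Omega|\Grad \varphi|^2}{\int_\Omega\varphi^2}.
\]
Now take the infimum over nonnegative nonzero $\varphi\in H_0^1(\Omega)$. By the variational characterization of $\lambda_1$, and because replacing $\varphi$ by $|\varphi|$ does not change the quotient, this infimum equals $\lambda_1$. This yields $\lambda_{1,F}\leq\lambda_1/M_F$.

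Combining the two bounds gives $M_F\lambda_{1,F}=\lambda_1$. There is no genuine obstacle in this argument. The only delicate point is the sign issue in the homogeneity identity, and restricting to nonnegative $\varphi$ handles it.
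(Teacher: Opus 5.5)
Your proof is correct and follows the paper's argument. The lower bound comes from $F(s)\le M_F|s|^2$ combined with the coordinatewise Poincar\'e inequality, and the upper bound comes from testing with $U=\varphi\theta$ where $F(\theta)=M_F$. The only cosmetic difference is that the paper plugs in a positive first Dirichlet eigenfunction directly rather than taking an infimum over nonnegative $\varphi$. That choice also handles the sign issue with positive-scalar homogeneity that you rightly flagged.
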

\begin{proof}
For any $U\in H_0^1(\Omega; \bb R^d)$ for which $\int_\Omega F(U) = 1$, Lemma \ref{lemma:properties_homogeneous_functions} \ref{item:homogeneous_upper_bound} gives $1\leq M_F\|U\|_2^2$, from which we obtain 
\begin{equation*}
	\|U\|_{H_0^1(\Omega; \bb R^d)}^2
	\geq \frac{\|U\|_{H_0^1(\Omega; \bb R^d)}^2}{M_F\|U\|_2^2}
	\geq M_F^{-1}\lambda_1. 
\end{equation*}
\ifdetails{\color{details}
The last inequality holds by the following elementary computation: 
\begin{equation*}
\begin{split}
	\frac{\|U\|_{H_0^1(\Omega; \bb R^d)}^2}{\|U\|_2^2}
	& = \frac{\sum_{j = 1}^d\int_\Omega|\Grad u_j|^2}{\sum_{j = 1}^d\int_\Omega|u_j|^2}\\
	& = \left(\sum_{j = 1}^d \int_\Omega|u_j|^2\right)^{-1}\sum_{j = 1}^d\frac{\int_\Omega |\Grad u_j|^2}{\int_\Omega|u_j|^2}\cdot \int_\Omega|u_j|^2\\
	& \geq \left(\sum_{j = 1}^d \int_\Omega|u_j|^2\right)^{-1}\sum_{j = 1}^d\lambda_1\int_\Omega |u_j|^2\\
	& = \lambda_1. 
\end{split}
\end{equation*}
} 
\fi 
Taking the infimum over all $U\in H_0^1(\Omega; \bb R^d)$ satisfying $\int_\Omega F(U) = 1$ gives $M_F\lambda_{1, F}\geq \lambda_1$. To show the reverse inequality, let $s = (s_1, \ldots, s_d)\in \bb S^{d - 1}$ satisfy $M_F = F(s)$ and let $\varphi\in H_0^1(\Omega)$ be a positive first eigenfunction for $\lambda_1$. Setting $U = (s_1\varphi, \ldots, s_d\varphi)$ we have $\|U\|_{H_0^1(\Omega; \bb R^d)} = \|\varphi\|_{H_0^1(\Omega)}$
\ifdetails{\color{details} 
This is verified as follows: 
\begin{equation*}
	\|U\|_{H_0^1(\Omega; \bb R^d)}^2
	= \left(\sum_{j = 1}^d |s_j|^2\right)\int_\Omega |\Grad \varphi|^2
	= \|\varphi\|_{H_0^1(\Omega)}^2
\end{equation*}
}
\fi 
and 
\begin{equation*}
	\int_\Omega F(U)
	\ifdetails{\color{details} 
	\; = \int_\Omega F(s\varphi)
	} 
	\fi 
	= M_F \int_\Omega |\varphi|^2
	> 0.
\end{equation*}
Therefore, 
\begin{equation*}
	\lambda_{1, F}
	\leq \frac{\|U\|_{H_0^1(\Omega; \bb R^d)}^2}{\int_\Omega F(U)}
	= \frac{\|\varphi\|_{H_0^1(\Omega)}^2}{M_F\|\varphi\|_2^2}
	= M_F^{-1}\lambda_1. 
\end{equation*}
\end{proof}
To close this section we establish the following equivalent condition for coercivity of $\Phi_{F}$. 
\begin{lemma}
\label{lemma:Phi_AF_coercive}
Let $n\geq 3$ and let $\Omega \subset \bb R^n$ be a bounded open set. If  $F\in C^0(\bb R^d)$ is homogeneous of degree $2$ and if $M_F> 0$ then the functional $\Phi_{F}$ defined in \eqref{eq:intro_Phi_AF} is coercive if and only if $M_F< \lambda_1$. 
\end{lemma}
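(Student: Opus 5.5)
The plan is to reduce both directions to Lemma \ref{lemma:relate_lambda1_to_lambda1F} together with the variational characterization \eqref{eq:lambda1_FA} of $\lambda_{1,F}$. Here coercivity of $\Phi_F$ means $\Phi_F(U)\to\infty$ as $\|U\|_{H_0^1(\Omega;\bb R^d)}\to\infty$. The natural quantity to work with is the ratio $\int_\Omega F(U)/\|U\|^2_{H_0^1(\Omega;\bb R^d)}$, which by $2$-homogeneity of $F$ is invariant under scaling $U\mapsto tU$. Its supremum over $U\ne 0$ is $\lambda_{1,F}^{-1}$; this uses $M_F>0$, which makes the set in \eqref{eq:lambda1_FA} nonempty by Lemma \ref{lemma:simple_construction}. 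Lemma \ref{lemma:relate_lambda1_to_lambda1F} then gives
\begin{equation*}
	\sup_{U\neq 0}\frac{\int_\Omega F(U)}{\|U\|^2_{H_0^1(\Omega;\bb R^d)}} = \lambda_{1,F}^{-1}=\frac{M_F}{\lambda_1}.
\end{equation*}

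\emph{Sufficiency.} Suppose $M_F<\lambda_1$. For every $U\in H_0^1(\Omega;\bb R^d)$, Lemma \ref{lemma:properties_homogeneous_functions}\ref{item:homogeneous_upper_bound} and the Poincaré-type estimate already displayed before \eqref{eq:lambda1_FA} give $\int_\Omega F(U)\le M_F\|U\|_2^2\le \lambda_1^{-1}M_F\|U\|^2_{H_0^1}$. This holds regardless of the sign of $\int_\Omega F(U)$, since $F(U)\le M_F|U|^2$ pointwise. Hence
\begin{equation*}
	\Phi_F(U)\ge \Bigl(1-\frac{M_F}{\lambda_1}\Bigr)\|U\|^2_{H_0^1(\Omega;\bb R^d)},
\end{equation*}
and the coefficient is positive, so $\Phi_F$ is coercive.

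\emph{Necessity.} Suppose $M_F\ge\lambda_1$. Take $s\in\bb S^{d-1}$ with $F(s)=M_F$ and a first Dirichlet eigenfunction $\varphi$, and set $U=s\varphi$, exactly as in the proof of Lemma \ref{lemma:relate_lambda1_to_lambda1F}. Then $\|U\|^2_{H_0^1}=\lambda_1\|\varphi\|_2^2$ and $\int_\Omega F(U)=M_F\|\varphi\|_2^2$, so
\begin{equation*}
	\Phi_F(tU)=t^2(\lambda_1-M_F)\|\varphi\|_2^2\le 0\qquad\text{for all } t>0.
\end{equation*}
Since $\|tU\|_{H_0^1}\to\infty$ as $t\to\infty$ while $\Phi_F(tU)$ stays nonpositive, $\Phi_F$ is not coercive.

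There is no real obstacle here. The only point needing care is the borderline case $M_F=\lambda_1$: the ray argument still works there because $\Phi_F$ vanishes identically along the ray, which already rules out coercivity. This is why I would use the explicit test function $s\varphi$ rather than an abstract appeal to the infimum defining $\lambda_{1,F}$.
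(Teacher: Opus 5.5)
Your proof is correct. It uses the same two ingredients as the paper but assembles them differently. The paper routes both directions through $\lambda_{1,F}$ and the identity $M_F\lambda_{1,F}=\lambda_1$ of Lemma \ref{lemma:relate_lambda1_to_lambda1F}. It proves sufficiency by splitting into the cases $\int_\Omega F(U)\le 0$ and $\int_\Omega F(U)>0$. For necessity it takes coercivity to mean $\Phi_F(U)\ge\epsilon\|U\|^2_{H_0^1(\Omega;\bb R^d)}$ for some $\epsilon\in(0,1)$, divides by $\int_\Omega F(U)$, and takes the infimum to get $1\le(1-\epsilon)\lambda_{1,F}<\lambda_1/M_F$.

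You effectively inline that lemma. The pointwise bound $F(U)\le M_F|U|^2$ plus Poincar\'e (which is the half $\lambda_{1,F}\ge\lambda_1/M_F$) gives sufficiency with no case split. The test function $s\varphi$ (which is the half $\lambda_{1,F}\le\lambda_1/M_F$) gives necessity by contrapositive. What this buys is that your argument does not depend on which definition of coercivity is used. In one direction you get the strong bound $\Phi_F(U)\ge(1-M_F/\lambda_1)\|U\|^2_{H_0^1(\Omega;\bb R^d)}$. In the other you get $\Phi_F\le 0$ on an entire ray, which defeats even the weak notion $\Phi_F(U)\to\infty$. The paper's necessity step presupposes the $\epsilon$-form, so covering the weak notion needs the (easy, by $2$-homogeneity) equivalence of the two.

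Your opening paragraph about $\sup_{U\neq 0}\int_\Omega F(U)/\|U\|^2_{H_0^1(\Omega;\bb R^d)}$ is never used and can be dropped. Also, the borderline case is not the only point needing care. You should take $\varphi\ge 0$, as the paper does, because $F$ is homogeneous only under positive scalings and $F(s\varphi)=\varphi^2F(s)$ may fail where $\varphi<0$.
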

\begin{proof}
If $\Phi_{F}$ is coercive then there is $\epsilon\in (0, 1)$ such that the inequality 
\begin{equation*}
	\|U\|_{H_0^1(\Omega; \bb R^d)}^2 - \int_\Omega F(U)
	\geq \epsilon\|U\|_{H_0^1(\Omega; \bb R^d)}^2
\end{equation*}
holds for all $U\in H_0^1(\Omega; \bb R^d)$. In particular, for any such $\epsilon$ and for any $U\in H_0^1(\Omega; \bb R^d)$ for which $\int_\Omega F(U)> 0$ we have 
\begin{equation*}
	1\leq (1 - \epsilon)\frac{\|U\|_{H_0^1(\Omega; \bb R^d)}^2}{\int_\Omega F(U)}. 
\end{equation*}
Taking the infimum over all such $U$ and in view of the positivity of $\lambda_{1, F}$ and Lemma \ref{lemma:relate_lambda1_to_lambda1F} we have 
\begin{equation*}
	1
	\leq ( 1- \epsilon)\lambda_{1, F}
	< \lambda_{1, F}
	= M_F^{-1}\lambda_1. 
\end{equation*}
This shows that the inequality $M_F< \lambda_1$ is a necessary condition for the coercivity of $\Phi_{F}$. Next we proceed to show that this inequality is also sufficient for the coercivity of $\Phi_{F}$. For all $U\in H_0^1(\Omega; \bb R^d)$ for which $\int_\Omega F(U)\leq 0$, the inequality $\Phi_{F}(U)\geq \|U\|_{H_0^1(\Omega; \bb R^d)}^2$ holds by inspection of $\Phi_{F}$. For $U\in H_0^1(\Omega; \bb R^d)\setminus\{0\}$ satisfying $\int_\Omega F(U)> 0$ we have
\begin{equation*}
\begin{split}
	\Phi_{F}(U)
	& = \left( 1- \frac{\int_\Omega F(U)}{\|U\|_{H_0^1(\Omega; \bb R^d)}^2}\right)\|U\|_{H_0^1(\Omega; \bb R^d)}^2\\
	& \geq \left(1 - \frac{1}{\lambda_{1, F}}\right)\|U\|_{H_0^1(\Omega; \bb R^d)}^2\\
	& = \left(1 - \frac{M_F}{\lambda_1}\right)\|U\|_{H_0^1(\Omega; \bb R^d)}^2,  
\end{split}
\end{equation*}
where the final equality holds by Lemma \ref{lemma:relate_lambda1_to_lambda1F}. We conclude that the inequality $M_F< \lambda_1$ guarantees the coercivity of $\Phi_{F}$. 
\end{proof}
%
\section{Inequalities of Sobolev Type}
\label{s:sobolev_inequalities}
In Section \ref{s:sufficient_condition_for_minimizer}, we will provide a sufficient condition for the existence of nontrivial solutions to problem \eqref{eq:main_problem}. This condition will be expressed as a smallness threshold for the infimum of the energy functional $\Phi_{F}$ constrained to $\mc M$ and this threshold is quantified in terms of a sharp Sobolev type constant. The purpose of this section is to introduce and compute the value of this sharp constant.

We denote the sharp constant in the Sobolev inequality by 
\begin{equation}
\label{eq:sharp_sobolev_constant}
	\mc S^{-1} 
	= \inf\left\{\|u\|_{D^{1, 2}(\bb R^n)}^2: u\in D^{1, 2}(\bb R^n)\text{ and }\|u\|_{L^{2^*}(\bb R^n)} = 1\right\},  
\end{equation}
where $D^{1, 2}(\bb R^n)$ is the completion of $C_c^\infty(\bb R^n)$ relative to the norm $\|u\|_{D^{1, 2}(\bb R^n)} = \|\Grad u\|_{L^2(\bb R^n)}$. Moreover, when $p = \frac{2n}{2n - \mu}$ and $q = \frac{2n}\mu$ in Theorem \ref{theorem:hls_inequality} we denote the corresponding optimal constant $\mc H(n, \mu, \frac{2n}{2n - \mu})$ by $\mc H$. That is, 
\begin{equation}
\label{eq:sharp_HLS_constant}
	\mc H
	= \sup\left\{\|I_\mu [f]\|_{L^{\frac{2n}\mu}(\bb R^n)}: f\in L^{\frac{2n}{2n - \mu}}(\bb R^n)\text{ and }\|f\|_{L^{\frac{2n}{2n - \mu}}(\bb R^n)} = 1\right\}. 
\end{equation}
It is well-known that $\mc S$ depends only on $n$ and that $\mc H$ depends only on $n$ and $\mu$. It is also well-known that for any $(x_0, \epsilon)\in \bb R^n\times (0, \infty)$, the infimum in \eqref{eq:sharp_sobolev_constant} is attained by 
\begin{equation}
\label{eq:normalized_bubbles}
	u_{x_0, \epsilon}(x)
	= c_n\left(\frac{\epsilon}{\epsilon^2 + |x- x_0|^2}\right)^{\frac{n -2}2}, 
\end{equation}
where $c_n>0$ is a constant for which $\|u_{x_0, \epsilon}\|_{L^{2^*}(\bb R^n)} = 1$ for all $(x_0, \epsilon)\in \bb R^n\times (0, \infty)$. From Lemma 1.2 of \cite{GaoYang2018} the quantity $\mc S_{HL}$ defined by 
\begin{equation}
\label{eq:sharp_choquard_constant}
	\mc S_{HL}^{-1} 
	= \inf\left\{\frac{\|u\|_{D^{1, 2}(\bb R^n)}^2}{\left(\int_{\bb R^n}I_\mu[|u|^{2^*_\mu}]|u|^{2^*_\mu} \right)^{1/2^*_\mu}}: u\in D^{1, 2}(\bb R^n)\setminus\{0\}\right\} 
\end{equation}
is well-defined and given by 
\begin{equation}
\label{eq:SHL_expression}
	\mc S_{HL} = \mc S\mc H^{1/2^*_\mu}. 
\end{equation}
Moreover, $u\in D^{1, 2}(\bb R^n)$ attains the infimum in the definition of $\mc S_{HL}^{-1}$ if and only if there is $(x_0, \epsilon)\in \bb R^n\times (0, \infty)$ such that $u$ is a non-zero constant multiple of $u_{x_0, \epsilon}$. In what follows, we formulate an analog of the sharp constant in \eqref{eq:sharp_choquard_constant} in the setting of vector-valued functions and for $2^*_\mu$ positively homogeneous nonlinearities $H\in C^0(\bb R^d)$. For any $U\in D^{1, 2}(\bb R^n; \bb R^d)$ recalling the definition of $\Psi$ in \eqref{eq:intro_Psi} then applying H\"older's inequality, the HLS inequality and Lemma \ref{lemma:homogeneous_integrability} gives
\begin{equation}
\label{eq:Psi(U)_upper_bound}
\begin{split}
	\Psi(U)
	& \ifdetails{\color{details}
	\; = \int_{\bb R^n}I_\mu[H(U)]H(U)
	}
	\\
	& \fi
	\leq \|I_\mu[H(U)]\|_{L^{2n/\mu}(\bb R^n)}\|H(U)\|_{L^{2n/(2n - \mu)}(\bb R^n)}\\
	& \leq \mc H \|H(U)\|_{L^{2n/(2n - \mu)}(\bb R^n)}^2\\
	& \leq \mc H M_H^2\|U\|_{L^{2^*}(\bb R^n; \bb R^d)}^{2\cdot 2^*_\mu}. 
\end{split}
\end{equation}
Moreover, from Minkowski's inequality and the sharp Sobolev inequality we have 
\begin{equation}
\label{eq:from_minkowski_sobolev}
\begin{split}
	\|U\|_{L^{2^*}(\bb R^n; \bb R^d)}^2
	& = \left(\int_{\bb R^n}\left(\sum_{j = 1}^d|u_j|^2\right)^{2^*/2}\right)^{2/2^*}\\
	& \leq \sum_{j = 1}^d\left(\int_{\bb R^n}|u_j|^{2^*}\right)^{2/2^*}\\
	& \leq \mc S \sum_{j= 1}^d\int_{\bb R^n}|\Grad u_j|^2\\
	& = \mc S\|U\|_{D^{1, 2}(\bb R^n; \bb R^d)}^2.
\end{split}
\end{equation}
Combining \eqref{eq:Psi(U)_upper_bound} and \eqref{eq:from_minkowski_sobolev} gives
\begin{equation}
\label{eq:Psi_embedding}
	\Psi(U)
	\leq \mc HM_H^2\mc S^{2^*_\mu}\|U\|_{D^{1, 2}(\bb R^n; \bb R^d)}^{2\cdot2^*_\mu}. 
\end{equation}
This computation 
\ifdetails{\color{details}
(together with the fact that $\Psi(U)>0$ whenever $U\in L^{2^*}(\bb R^n)\setminus\{0\}$)
}\fi
shows that the quantity $\mc N(H) = \mc N(H, n, \mu)$ defined by 
\begin{equation}
\label{eq:energy_threshold}
	\mc N(H)^{-1} 
	= \inf\left\{\frac{\|U\|_{D^{1, 2}(\bb R^n; \bb R^d)}^2}{\Psi(U)^{1/2^*_\mu}}: U\in D^{1, 2}(\bb R^n; \bb R^d)\setminus\{0\}\right\}	
\end{equation}
is well-defined and satisfies 
\begin{equation}
\label{eq:NHinv_lower_bound}
	\mc N(H)^{-1}
	\geq (\mc HM_H^2\mc S^{2^*_\mu})^{-1/2^*_\mu}
	= M_H^{-2/2^*_\mu}\mc S_{HL}^{-1},  
\end{equation}
where the equality holds in view of \eqref{eq:SHL_expression}. 
The following lemma both guarantees that equality holds in this inequality and classifies the extremal functions in the definition of $\mc N(H)$.  
\begin{lemma}
\label{lemma:H_sharp_constant}
If $\mu\in (0, n)$ and $H\in C^0(\bb R^d)$ is positively homogeneous of degree $2^*_\mu$ then
\begin{equation}
\label{eq:NHinv_equality}
	\mc N(H)
	\ifdetails{\color{details}
	\; = (\mc HM_H^2\mc S^{2^*_\mu})^{1/2^*_\mu}
	}\fi
	= M_H^{2/2^*_\mu}\mc S_{HL}. 
\end{equation}
Moreover, $U\in D^{1, 2}(\bb R^n; \bb R^d)$ attains the infimum in the definition of $\mc N(H)^{-1}$ if and only if $U = cu_{x_0, \epsilon}s$ for some $c>0$, some $(x_0, \epsilon)\in \bb R^n\times (0, \infty)$ and some $s \in \bb S^{d -1}$ for which $H(s)= M_H$. 
\end{lemma}
\begin{proof}
\ifdetails{\color{details}
See hand-written notes ``2024-04-05 toward sufficient condition for minimizer'' for a proof of the expression for $\mc N$ using functions that are supported on $\Omega$ in place of the standard bubbles.
}\fi
Choose $s\in \bb S^{d - 1}$ for which $H(s) = M_H$, set $u(x) = c_n^{-1}u_{0, 1}(x) = (1 + |x|^2)^{-(n - 2)/2}$ as in \eqref{eq:normalized_bubbles} and define $U\in D^{1, 2}(\bb R^n; \bb R^d)$ by $U = (us_1, \ldots, us_d)$. This function satisfies both $\|U\|_{D^{1, 2}(\bb R^n; \bb R^d)}= \|u\|_{D^{1, 2}(\bb R^n)}$ and
\begin{equation*}
\begin{split}
	\Psi(U)
	& \ifdetails{\color{details}
	\; = \int_{\bb R^n}I_\mu[H(U)]H(U)\
	}
	\\
	& {\color{details}
	 \; = \int_{\bb R^n}I_\mu[u^{2^*_\mu}H(s)]u^{2^*_\mu}H(s)
	 }
	 \\
	& \fi 
	= M_H^2\int_{\bb R^n}I_\mu[u^{2^*_\mu}]u^{2^*_\mu}. 
\end{split}
\end{equation*}
Therefore, since $u$ attains $\mc S_{HL}^{-1}$, 
\begin{equation*}
	\frac{\|U\|_{D^{1, 2}(\bb R^n; \bb R^d)}^2}{\Psi(U)^{1/2^*_\mu}}
	\ifdetails{\color{details}
	\; = \frac{\|u\|_{D^{1, 2}(\bb R^n)}^2}{M_H^{\frac{2}{2^*_\mu}}\left(\int_{\bb R^n}I_\mu[u^{2^*_\mu}]u^{2^*_\mu}\right)^{1/2^*_\mu}}
	}\fi
	= (M_H^{2/2^*_\mu}\mc S_{HL})^{-1}. 
\end{equation*}
When combined with \eqref{eq:NHinv_lower_bound}, this equality establishes \eqref{eq:NHinv_equality}. Next suppose $U= (u_1, \ldots, u_d)\in D^{1, 2}(\bb R^n; \bb R^d)$ attains the infimum in the definition of $\mc N^{-1}(H)$. Evidently $U\not\equiv 0$ and estimating as in \eqref{eq:Psi(U)_upper_bound} and \eqref{eq:from_minkowski_sobolev} and using \eqref{eq:NHinv_equality} we obtain 
\begin{equation*}
\begin{split}
	\mc N\|U\|_{D^{1, 2}(\bb R^n; \bb R^d)}^2
	& = \Psi(U)^{1/2^*_\mu}\\
	& \leq(\mc HM_H^2)^{1/2^*_\mu}\mc S\|U\|_{D^{1, 2}(\bb R^n; \bb R^d)}^2\\
	& = \mc N\|U\|_{D^{1, 2}(\bb R^n; \bb R^d)}^2. 
\end{split}
\end{equation*}
Thus, for this $U$ equality holds throughout both of \eqref{eq:Psi(U)_upper_bound} and \eqref{eq:from_minkowski_sobolev}. Let $A = \{i\in \{1, \ldots, d\}: u_i\not\equiv 0\}$, let $B = \{i\in \{1, \ldots, d\}: u_i\equiv 0\}$, and note that $A\neq\emptyset$. We assume without losing generality that $A = \{1, \ldots, m\}$ for some $m\in \{1, \ldots, d\}$. Since equality holds in the application of Minkowski's inequality in \eqref{eq:from_minkowski_sobolev} 
\ifdetails{\color{details}
i.e., 
\begin{equation*}
\begin{split}
    \|\sum_{i = 1}^mu_i^2\|_{2^*/2}
    = \|\sum_{i = 1}^du_i^2\|_{2^*/2}
    = \sum_{i = 1}^d\|u_i^2\|_{2^*/2}
    = \sum_{i = 1}^m\|u_i^2\|_{2^*/2}
\end{split}
\end{equation*}
}\fi
we deduce the existence of $u\in D^{1, 2}(\bb R^n)\setminus\{0\}$ and, for each $i\in A$, the existence of a positive constant $a_i$ such that $u_i = a_i u$. Combining this fact with the fact that equality holds in the second inequality of \eqref{eq:from_minkowski_sobolev} we deduce that $u$ is an extremal function for the sharp Sobolev inequality. 
\ifdetails{\color{details}
Indeed, we have 
\begin{equation*}
\begin{split}
    \left(\sum_{i= 1}^ma_i^2\right)\|u\|_{L^{2^*}(\bb R^n)}^2
    & = \sum_{i = 1}^m\|u_i\|_{L^{2^*}(\bb R^n)}^2\\
    & = \mc S\sum_{i = 1}^m\|\Grad u_i\|_{L^2(\bb R^n)}^2\\
    & = \mc S\left(\sum_{i = 1}^ma_i^2\right)\|\Grad u\|_{L^2(\bb R^n)}^2, 
\end{split}
\end{equation*}	 
from which we 
}\fi
In particular there is $b>0$ and $(x_0, \epsilon)\in \bb R^n\times(0, \infty)$ such that $u(x)= bu_{x_0, \epsilon}$, where $u_{x_0, \epsilon}$ is as in \eqref{eq:normalized_bubbles}. Therefore, for each $i\in A$, upon relabeling $ba_i$ as simply $a_i$ we have
\begin{equation}
\label{eq:the_ui_cone}
	u_i(x)
	\ifdetails{\color{details}
	\; = a_ic_n\left(\frac{\epsilon}{\epsilon^2 + |x - x_0|^2}\right)^{\frac{n- 2}2}
	}\fi 
	= a_iu_{x_0, \epsilon}.  
\end{equation}
Set $a = (a_1, \ldots, a_m, 0, \ldots, 0)\in \bb R^d\setminus\{0\}$ and note that \eqref{eq:the_ui_cone} implies $\frac{U}{|U|} \equiv \frac a{|a|}$ for all $x\in \bb R^n$. Since equality holds in the last inequality of \eqref{eq:Psi(U)_upper_bound} and by the $2^*_\mu$-homogenity of $H$ we have
\begin{equation*}
\begin{split}
    0
    & = \left(M_H^2\|U\|_{2^*}^{2\cdot 2^*_\mu}\right)^{\frac{n}{2n - \mu}} - \int_{\bb R^n}H(U)^{\frac{2n}{2n - \mu}}\\
    & = \int_{\bb R^n}\left(M_H^{\frac{2n}{2n - \mu}} - H\left(\frac{U}{|U|}\right)^{\frac{2n}{2n - \mu}}\right)|U|^{2^*}\\
    & = \int_{\bb R^n}\left(M_H^{\frac{2n}{2n - \mu}} - H\left(\frac{a}{|a|}\right)^{\frac{2n}{2n - \mu}}\right)|U|^{2^*}, 
\end{split}
\end{equation*}
from which we deduce that $M_H = H\left(\frac{a}{|a|}\right)$. In particular $U = c u_{x_0, \epsilon}s$ with $c = |a|> 0$ and with $s = \frac{a}{|a|}\in \bb S^{d - 1}$ satisfying $H(s) = M_H$. 
\end{proof}
\section{Sufficient Condition for Existence of a Minimizer}
\label{s:sufficient_condition_for_minimizer}
In this section we establish a sufficient condition for the existence of a minimizer of the functional $\Phi_{F}$ defined in \eqref{eq:intro_Phi_AF} subject to the constraint $\mc M$ defined in \eqref{eq:the_constraint}. We define 
\begin{equation}
\label{eq:energy_quotient}
	Q(U) = Q_{F, H, \mu}(U)
	= \frac{\Phi_{F}(U)}{\Psi(U)^{1/2_\mu^*}}
	\qquad \text{ for }U\in H_0^1(\Omega; \bb R^d)\setminus \{0\} 
\end{equation}
and we observe that when $F$ and $H$ satisfy \eqref{assumptions:FH}, Lemma \ref{lemma:Phi_AF_coercive} and inequality \eqref{eq:Psi_embedding} combine to guarantee that $Q$ is bounded below by a positive constant. For such $F$ and $H$, the quantity
\begin{equation*}
\begin{split}
	K(F, H)^{-1} 
	& = \inf\{\Phi_{F}(U): U\in \mc M\}\\
	& = \inf\{Q(U): U\in H_0^1(\Omega; \bb R^d)\setminus\{0\}\}, 
\end{split}
\end{equation*}
is well-defined and positive. For ease of notation, in what follows we do not indicate the dependence on $F$ or $H$ in the notations for $\Phi$, $Q$, $\mc N$ or $K$. For example, the notation $\Phi$ is understood to stand for $\Phi_{F}$. The following proposition is the main result of this section. 
\begin{prop}[Sufficient condition for existence of a minimizer]
\label{prop:sufficient_condition_for_minimizer}
Let $n\geq 3$, let $\Omega\subset \bb R^n$ be a bounded open set, and let $\mu\in (0, n)$. Suppose $F$ and $H$ satisfy assumptions \eqref{assumptions:FH}. If 
\begin{equation}
\label{eq:energy_threshhold}
	\mc N(H)< K(F,H)
\end{equation} 
then there is $U\in \mc M$ for which $\Phi(U) = K(F, H)^{-1}$. 
\end{prop}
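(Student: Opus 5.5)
We follow the Brezis--Nirenberg concentration-compactness scheme. Take a minimizing sequence $(U^k)\subset\mc M$ with $\Phi(U^k)\to K^{-1}$, where $K=K(F,H)$. By Lemma \ref{lemma:Phi_AF_coercive} (since $M_F<\lambda_1$) the sequence is bounded in $H_0^1(\Omega;\bb R^d)$. After passing to a subsequence we may assume:
\begin{itemize}
\item $U^k\weakconv U$ weakly in $H_0^1$;
\item $U^k\to U$ strongly in $L^2$, by Rellich;
\item $U^k\to U$ a.e.\ in $\Omega$.
\end{itemize}
By the generalized dominated convergence argument already used for $\lambda_{1,F}$ (with $|F(U^k)|\le M_{|F|}|U^k|^2$), we get $\int_\Omega F(U^k)\to\int_\Omega F(U)$. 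Set $V^k=U^k-U$, so $V^k\weakconv 0$ in $H_0^1$, and $\|U^k\|^2_{H_0^1}=\|U\|^2_{H_0^1}+\|V^k\|^2_{H_0^1}+o(1)$. Hence
\[
K^{-1}+o(1)=\Phi(U^k)=\Phi(U)+\|V^k\|^2_{H_0^1}+o(1).
\]

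The key step, and the main obstacle, is a Brezis--Lieb splitting for the nonlocal term:
\[
\Psi(U^k)=\Psi(U)+\Psi(V^k)+o(1).
\]
We would prove it in two stages. First, $H(U^k)-H(V^k)\to H(U)$ in $L^{2n/(2n-\mu)}(\Omega)$. This is the vector-valued Brezis--Lieb lemma. It follows because $H$ is $C^1$ and $2^*_\mu$-homogeneous, so $|H(a+b)-H(a)|\le \epsilon|a|^{2^*_\mu}+C_\epsilon|b|^{2^*_\mu}$, using uniform continuity of $H$ on the sphere together with homogeneity. Second, expand
\[
\Psi(U^k)=\int I_\mu[H(U^k)]H(U^k),
\]
writing $H(U^k)=H(U)+H(V^k)+o_{L^{2n/(2n-\mu)}}(1)$. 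The cross term $\int I_\mu[H(U)]H(V^k)$ tends to $0$: $H(V^k)\to0$ a.e.\ and is bounded in $L^{2n/(2n-\mu)}$, so it converges weakly to $0$ there, while $I_\mu[H(U)]\in L^{2n/\mu}$ by HLS. The remaining error terms vanish by HLS and H\"older, since the sequences are bounded in $L^{2^*}$.

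Next use the definition of $\mc N=\mc N(H)$, extending $V^k$ by zero to obtain an element of $D^{1,2}(\bb R^n;\bb R^d)$. This gives $\Psi(V^k)^{1/2^*_\mu}\le\mc N\|V^k\|^2_{H_0^1}$. Pass to a subsequence with $\Psi(V^k)\to\beta\in[0,1]$ and set $\alpha=\Psi(U)=1-\beta$. Also $\Phi(U)\ge K^{-1}\alpha^{1/2^*_\mu}$; this holds trivially when $U=0$. Therefore
\[
K^{-1}\ge K^{-1}\alpha^{1/2^*_\mu}+\mc N^{-1}\beta^{1/2^*_\mu}>K^{-1}\bigl(\alpha^{1/2^*_\mu}+\beta^{1/2^*_\mu}\bigr)\quad\text{if }\beta>0,
\]
where the strict inequality uses the hypothesis $\mc N<K$. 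Since $1/2^*_\mu<1$, concavity gives $\alpha^{1/2^*_\mu}+\beta^{1/2^*_\mu}\ge(\alpha+\beta)^{1/2^*_\mu}=1$, which is a contradiction. Hence $\beta=0$ and $\Psi(U)=1$, so $U\in\mc M$. Weak lower semicontinuity of the $H_0^1$ norm and the convergence of $\int F(U^k)$ then give $\Phi(U)\le\liminf\Phi(U^k)=K^{-1}$, and equality follows from the definition of $K^{-1}$.
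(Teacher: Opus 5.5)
Your proof is correct and follows essentially the same route as the paper: a bounded minimizing sequence, the Brezis--Lieb-type splitting $\Psi(U^k)=\Psi(U)+\Psi(U^k-U)+o(1)$ (proved exactly as in Lemma~\ref{lemma:BL_type}), the bound $\Psi(U^k-U)^{1/2^*_\mu}\le \mc N\|U^k-U\|_{H_0^1}^2$, and the subadditivity of $t\mapsto t^{1/2^*_\mu}$ combined with $\mc N<K$ to force $\Psi(U)=1$. The only difference is cosmetic: the paper first derives $\Psi(U)\ge 1$, deduces strong convergence $U^k\to U$ in $H_0^1$, and then reads off $\Psi(U)=1$, whereas you obtain $\Psi(U)=1$ directly by contradiction and finish with weak lower semicontinuity, so you do not need strong convergence.
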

Up to a nonzero constant multiple, every constrained minimizer of $\Phi$ is a nontrivial solution to problem \eqref{eq:main_problem}, so we obtain the following corollary to Proposition \ref{prop:sufficient_condition_for_minimizer}. 
\begin{coro}
\label{coro:existence_of_solution}
Under the hypotheses of Proposition \ref{prop:sufficient_condition_for_minimizer}, if $f$ and $h$ are as in \eqref{eq:the_gradients} then problem \eqref{eq:main_problem} admits a nontrivial weak solution. 
\end{coro}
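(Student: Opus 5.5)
The corollary follows from Proposition \ref{prop:sufficient_condition_for_minimizer} once we check that a constrained minimizer, after rescaling by a positive constant, is a weak solution. So we take $U\in \mc M$ with $\Phi(U) = K^{-1}$ and show that $U$ is a critical point of $Q$ on $H_0^1(\Omega;\bb R^d)\setminus\{0\}$. This works because $U$ minimizes $Q$ and $Q$ is invariant under multiplication by positive constants ($\Phi$ is $2$-homogeneous and $\Psi^{1/2^*_\mu}$ is $2$-homogeneous).

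\textbf{Step 1: differentiability.} We verify that $\Phi$ and $\Psi$ are $C^1$ on $H_0^1(\Omega;\bb R^d)$, with
\begin{equation*}
	\Phi'(U)V = 2\int_\Omega \lb \Grad U,\Grad V\rb - 2\int_\Omega \lb f(U), V\rb,
	\qquad
	\Psi'(U)V = 2\cdot 2^*_\mu\int_\Omega I_\mu[H(U)]\lb h(U), V\rb.
\end{equation*}
For $\Phi$, the $F$-term is handled by Lemma \ref{lemma:properties_homogeneous_functions}\ref{item:derivatives_homogeneous}: $|\Grad F(s)|\le C|s|$, so Nemytskii operator arguments in $L^2$ apply.

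For $\Psi$, the factor $2$ comes from the symmetry of the Riesz kernel, $\int I_\mu[g]k = \int I_\mu[k]g$. We also use $|\Grad H(s)|\le C|s|^{2^*_\mu-1}$. Then H\"older's inequality with exponents $\tfrac{2n}{\mu}$, $\tfrac{2^*}{2^*_\mu - 1}$ and $2^*$, together with the HLS inequality (Theorem \ref{theorem:hls_inequality}) and the embedding $H_0^1\subset L^{2^*}$, gives the bound and the continuity of the derivative. This uses that $(2^*_\mu-1)\tfrac{2n}{2n-\mu}$ and $2^*_\mu\tfrac{2n}{2n-\mu}$ equal the right multiples of $2^*$. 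This step is the main technical point, though it is standard.

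\textbf{Step 2: Euler--Lagrange equation.} Since $\Psi(U)=1$, the condition $\frac{d}{dt}Q(U+tV)|_{t=0}=0$ becomes
\begin{equation*}
	\Phi'(U)V - \tfrac{1}{2^*_\mu}\Phi(U)\Psi'(U)V = 0.
\end{equation*}
Writing $\kappa = K^{-1} > 0$, this reads
\begin{equation*}
	\int_\Omega\lb\Grad U,\Grad V\rb - \int_\Omega\lb f(U),V\rb = \kappa\int_\Omega I_\mu[H(U)]\lb h(U),V\rb
	\quad\text{for all } V\in H_0^1(\Omega;\bb R^d).
\end{equation*}

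\textbf{Step 3: rescaling.} Set $W = tU$ with $t>0$. Then $f(W) = t f(U)$ by Lemma \ref{lemma:properties_homogeneous_functions}\ref{item:derivatives_homogeneous}. For the nonlocal term, $I_\mu[H(W)]h(W) = t^{2\cdot 2^*_\mu - 1} I_\mu[H(U)]h(U)$. Hence $W$ is a weak solution of \eqref{eq:main_problem} provided $t^{2\cdot2^*_\mu-2} = \kappa$, and such a $t>0$ exists since $\kappa>0$ and $2^*_\mu>1$. Finally, $W\ne 0$ because $\Psi(U)=1$.
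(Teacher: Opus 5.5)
Your proof is correct and follows essentially the same route as the paper. Both derive the Euler--Lagrange equation $-\lap U = f(U) + K^{-1} I_\mu[H(U)]h(U)$ for the minimizer supplied by Proposition \ref{prop:sufficient_condition_for_minimizer}, then rescale by $t = K^{-1/(2\cdot 2^*_\mu - 2)}$ using the homogeneity of $f$ and $h$. The only difference is minor: the paper uses a Lagrange multiplier on $\mc M$ and identifies it by testing with $V = U$, whereas you use the scale invariance of $Q$ to treat $U$ as an unconstrained critical point, so the multiplier $\Phi(U)/2^*_\mu$ appears directly and you never need to check $\Psi'(U)\neq 0$.
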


The remainder of this section will be devoted to the proof of Proposition \ref{prop:sufficient_condition_for_minimizer}. This will be accomplished with the aid of a series of lemmata. The first such lemma is a result in the spirit of the classical Brezis-Lieb Lemma \cite{BrezisLieb1983}. 
\begin{lemma}
\label{lemma:BL_type}
Let $n\geq 3$, let $\mu\in (0, n)$, and let $H\in C^1(\bb R^d)$ be positively homogeneous of degree $2^*_\mu$. If $(U^k)_{k = 1}^\infty\subset L^{2^*}(\bb R^n; \bb R^d)$ is bounded in $L^{2^*}(\bb R^n; \bb R^d)$ and if $U^k(x)\to U(x)$ for a.e.\ $x\in \bb R^n$ then $U\in L^{2^*}(\bb R^n; \bb R^d)$ and $\lim_k (\Psi(U^k) - \Psi(U^k - U))=\Psi(U)$. 
\end{lemma}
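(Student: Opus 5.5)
First, $U\in L^{2^*}$ follows from Fatou's lemma: $\int|U|^{2^*}\le\liminf_k\int|U^k|^{2^*}<\infty$. Set $V^k=U^k-U$, so $V^k\to 0$ a.e.\ and $(V^k)$ is bounded in $L^{2^*}$. The plan reduces the nonlocal statement to a local Brezis--Lieb statement for $H$, and then uses the bilinearity of $(g,h)\mapsto\int I_\mu[g]h$ together with the HLS inequality. Write $p=\frac{2n}{2n-\mu}$, so that $2^*_\mu p=2^*$.

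\emph{Step 1 (local Brezis--Lieb for $H$).} I will show
\begin{equation*}
	H(U^k)-H(V^k)\to H(U)\quad\text{in }L^{p}(\bb R^n).
\end{equation*}
The key pointwise estimate comes from the mean value theorem together with Lemma \ref{lemma:properties_homogeneous_functions}\ref{item:derivatives_homogeneous}. Since $|\Grad H(s)|\le C|s|^{2^*_\mu-1}$, for every $\epsilon>0$ there is $C_\epsilon$ with
\begin{equation*}
	|H(a+b)-H(a)|\le \epsilon|a|^{2^*_\mu}+C_\epsilon|b|^{2^*_\mu}\quad\text{for all }a,b\in\bb R^d.
\end{equation*}
When $2^*_\mu\ge 1$ this is immediate from Young's inequality. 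If $2^*_\mu<1$, which can happen for $\mu$ near $n$, I instead use continuity and homogeneity: either $|b|\le\delta|a|$, in which case uniform continuity of $H$ on the sphere gives the bound $\epsilon|a|^{2^*_\mu}$, or $|b|>\delta|a|$, in which case both terms are at most $C_\delta|b|^{2^*_\mu}$. Apply this with $a=V^k$, $b=U$, and set
\begin{equation*}
	W_\epsilon^k=\bigl(|H(U^k)-H(V^k)-H(U)|-\epsilon|V^k|^{2^*_\mu}\bigr)^+ .
\end{equation*}
Then $W_\epsilon^k\le C'_\epsilon|U|^{2^*_\mu}\in L^p$ and $W^k_\epsilon\to0$ a.e., so dominated convergence gives $\|W_\epsilon^k\|_p\to0$. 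Hence
\begin{equation*}
	\limsup_k\|H(U^k)-H(V^k)-H(U)\|_p\le\epsilon\sup_k\|V^k\|_{2^*}^{2^*_\mu},
\end{equation*}
and letting $\epsilon\to0$ completes Step 1. This is the classical Brezis--Lieb argument run in $L^p$-norm rather than for integrals; to handle $p$th powers one can equally apply the same scheme to $|\cdot|^p$ of the difference.

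\emph{Step 2 (nonlocal splitting).} Write $A_k=H(U^k)$, $B_k=H(V^k)$, $C=H(U)$, and $E_k=A_k-B_k-C\to0$ in $L^p$. All of these are bounded in $L^p$ by Lemma \ref{lemma:homogeneous_integrability}. Setting $D(g,h)=\int I_\mu[g]h$, which is symmetric, bilinear, and satisfies $|D(g,h)|\le\mc H\|g\|_p\|h\|_p$ by H\"older's inequality and Theorem \ref{theorem:hls_inequality}, we get
\begin{equation*}
	\Psi(U^k)-\Psi(V^k)=D(A_k,A_k)-D(B_k,B_k)=D(C,C)+2D(B_k,C)+o(1),
\end{equation*}
where the $o(1)$ terms are those containing $E_k$, controlled by HLS. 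It therefore remains to show $D(B_k,C)=\int H(V^k)\,I_\mu[H(U)]\to0$.

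\emph{Step 3 (the main obstacle).} The sequence $H(V^k)$ is bounded in $L^p$ and converges to $0$ a.e., so it converges weakly to $0$ in $L^p$; the standard argument, which uses reflexivity since $1<p<\infty$ and identifies the limit via Egorov's theorem, applies. Meanwhile $I_\mu[H(U)]\in L^{2n/\mu}=(L^p)'$. Hence the cross term tends to $0$, which gives $\lim_k(\Psi(U^k)-\Psi(U^k-U))=\Psi(U)$. The main technical care lies in the pointwise inequality of Step 1 in the case $2^*_\mu<1$, and in this weak-convergence identification.
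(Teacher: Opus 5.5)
Your proof is correct and follows essentially the same route as the paper: Fatou's lemma for $U\in L^{2^*}$, the Brezis--Lieb-type convergence $H(U^k)-H(U^k-U)\to H(U)$ in $L^{2n/(2n-\mu)}$ obtained from the $\epsilon$-inequality plus dominated convergence, and a bilinear HLS splitting in which the cross term vanishes because $H(U^k-U)\rightharpoonup 0$ weakly. One small correction: the case $2^*_\mu<1$ never occurs, since $2^*_\mu=\frac{2n-\mu}{n-2}>\frac{n}{n-2}>1$ for every $\mu\in(0,n)$, so your separate continuity argument for that case is unnecessary.
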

\begin{proof}
The containment $U\in L^{2^*}(\bb R^n; \bb R^d)$ is a consequence of Fatou's lemma. 
\ifdetails{\color{details} 
Indeed, 
\begin{equation*}
\begin{split}
	\int_{\bb R^n}|U|^{2^*}
	& = \int_{\bb R^n}\liminf_k|U^k|^{2^*}\\
	& \leq \liminf_k\int_{\bb R^n}|U^k|^{2^*}\\
	& \leq \sup_k\int_{\bb R^n}|U^k|^{2^*}\\
	& < \infty. 
\end{split}
\end{equation*}
}\fi 
Lemma \ref{lemma:homogeneous_integrability} guarantees that for every $k$ there holds
\begin{equation*}
	\|H(U^k)\|_{\frac{2n}{2n - \mu}} 
	\leq M_{H}\|U^k\|_{L^{2^*}(\bb R^n; \bb R^d)}^{2^*_\mu}. 
\end{equation*}
In particular, $(H(U^k))_{k = 1}^\infty$ is bounded in $L^{\frac{2n}{2n - \mu}}(\bb R^n)$. By the continuity of $H$ and the a.e.\ convergence $U^k\to U$, we have $H(U^k)\to H(U)$ a.e.\ in $\bb R^n$. Since $(H(U^k))_{k = 1}^\infty$ is bounded in $L^{2n/(2n - \mu)}(\bb R^n)$ and $H(U^k)\to H(U)$ a.e.\ in $\bb R^n$ we have that $H(U^k)\weakconv H(U)$ weakly in $L^{2n/(2n - \mu)}(\bb R^n)$. 
\ifdetails{\color{details}
(See exercise 4.16 on p. 123 of Brezis' book or Proposition 5.4.7 of Willem's Functional Analysis, Fundamentals and Applications, Cornerstones vol. XIV.) 
}\fi 
By a similar argument, $(H(U^k - U))_{k = 1}^\infty$ is bounded in $L^{2n/(2n- \mu)}(\bb R^n)$ and $H(U^k- H)\to 0$ a.e.\ in $\bb R^n$, so $H(U^k - U)\weakconv H(0) = 0$ weakly in $L^{2n/(2n -\mu)}(\bb R^n)$. In particular, since $I_\mu[H(U)]\in L^{2n/\mu}(\bb R^n)$, we have 
\begin{equation}
\label{eq:the_weak_convergence1}
	\int_{\bb R^n}I_\mu[H(U)]H(U^k)  \to \int_{\bb R^n}I_\mu[H(U)]H(U) 
\end{equation}
and, using the symmetry of $I_\mu$, we have
\begin{equation}
\label{eq:the_weak_convergence2}
	\int_{\bb R^n}I_\mu[H(U^k - U)]H(U)
	= \int_{\bb R^n}I_\mu[H(U)]H(U^k - U) \to 0.
\end{equation}
Now, from \eqref{eq:the_weak_convergence1}, \eqref{eq:the_weak_convergence2} and the symmetry of $I_\mu$ we have
\begin{equation}
\label{eq:brezis_lieb_reduction}
\begin{split}
	\Psi(U^k)&  - \Psi(U^k - U)\\
	= &\; \int_{\bb R^n}I_\mu[H(U^k) - H(U^k - U) - H(U)]H(U^k)\\
	& + \int_{\bb R^n}I_\mu[H(U^k - U)](H(U^k) - H(U^k - U) - H(U))\\
	& + \int_{\bb R^n}I_\mu[H(U)]H(U^k)+ \int_{\bb R^n}I_\mu[H(U^k - U)]H(U)\\
	= &\; \int_{\bb R^n}I_\mu[H(U^k) + H(U^k - U)](H(U^k) - H(U^k - U) - H(U))\\
	& + \Psi(U)+ \circ(1). 
\end{split}
\end{equation}
The fact that both of $(H(U^k))_{k = 1}^\infty$ and $(H(U^k - U))_{k = 1}^\infty$ are bounded in $L^{2n/(2n -\mu)}(\bb R^n)$ together with the HLS inequality guarantees that $(I_\mu[H(U^k) + H(U^k - U)])_{k = 1}^\infty$ is bounded in $L^{2n/\mu}(\bb R^n)$. Therefore, in view of \eqref{eq:brezis_lieb_reduction}, and H\"older's inequality, to complete the proof the Lemma it suffices to show that 
\begin{equation}
\label{eq:brezis_lieb_last_step}
	H(U^k) - H(U^k - U) \to  H(U) \qquad \text{ in }L^{2n/(2n - \mu)}(\bb R^n). 
\end{equation}
To do so we first observe that for any $\epsilon>0$ there is $C(\epsilon) = C(\epsilon, n, \mu, \max_{\bb S^{d - 1}}|\Grad H|)> 0$ such that for any $a, b\in \bb R^d$,
\begin{equation}
\label{eq:initial_H_difference}
	|H(a + b) - H(a)|
	\leq \epsilon|a|^{2^*_\mu} + C(\epsilon)|b|^{2^*_\mu}. 
\end{equation}
\ifdetails{\color{details}
Indeed, for any such $\epsilon$, $a$ and $b$, using the $(2^*_\mu -1)$-homogenity of $\Grad H$ (see Lemma \ref{lemma:properties_homogeneous_functions}\ref{item:derivatives_homogeneous}), Young's inequality and the elementary inequality $|a + tb|^{2^*_\mu} \leq 2^{2^*_\mu - 1}(|a|^{2^*_\mu} + |b|^{2^*_\mu})$ for $t\in [0, 1]$ we have
\begin{equation*}
\begin{split}
	|H(a + b) - H(a)|
	& = \abs{\int_0^1\frac{\d}{\d t}H(a + tb)\; \d t}\\
	& = \abs{\int_0^1\Grad H(a + tb)\cdot b\; \d t}\\
	& \leq M\int_0^1|a + tb|^{2^*_\mu - 1}|b|\; \d t\\
	& \leq M\int_0^1\left(\epsilon|a + tb|^{2^*_\mu} + \frac{1}{\epsilon^{2^*_\mu- 1}}|b|^{2^*_\mu}\right)\; \d t\\
	& \leq M\left(2^{2^*_\mu - 1}\epsilon|a|^{2^*_\mu} + (2^{2^*_\mu - 1}\epsilon + \frac1{\epsilon^{2^*_\mu- 1}})|b|^{2^*_\mu}\right). 
\end{split}
\end{equation*}
Upon relabeling $\epsilon$ as $(2^{2^*_\mu - 1}M)^{-1}\epsilon$ we obtain estimate \eqref{eq:initial_H_difference}. 
}\fi
Fixing $\epsilon>0$ and applying \eqref{eq:initial_H_difference} with $a = U^k -U$ and $b = U$ gives
\begin{equation*}
	|H(U^k) - H(U^k - U)|
	\leq \epsilon|U^k - U|^{2^*_\mu} + C(\epsilon)|U|^{2^*_\mu}. 
\end{equation*}
Using the notation $t^+ = \max\{t, 0\}$ for $t\in \bb R$ and setting 
\begin{equation*}
	f_k^\epsilon
	= \left(|H(U^k) - H(U^k - U) - H(U)| - \epsilon|U^k -U|^{2^*_\mu}\right)^+
\end{equation*}
we have $|f_k^\epsilon|\leq (C(\epsilon) + M_{H})|U|^{2^*_\mu}$ and therefore, 
\begin{equation*}
	|f_k^\epsilon|^{\frac{2n}{2n - \mu}}
	\leq (C(\epsilon) + M_{H})^{\frac{2n}{2n -\mu}}|U|^{2^*}
	\in L^1(\bb R^n). 
\end{equation*}
\ifdetails{\color{details}
To see that the a.e. upper bound for $|f_k^\epsilon|$ holds, we have
\begin{equation*}
\begin{split}
	|f_k^\epsilon|
	& \leq \left(\epsilon|U^k - U|^{2^*_\mu} + C(\epsilon)|U|^{2^*_\mu} +  |H(U)| - \epsilon|U^k -U|^{2^*_\mu}\right)^+\\
	& = C(\epsilon)|U|^{2^*_\mu} +  |H(U)|\\
	& \leq (C(\epsilon) + M_{H})|U|^{2^*_\mu}. 
\end{split}
\end{equation*}
}\fi
Since in addition, $f_k^\epsilon\to 0$ a.e.\ in $\bb R^n$, the Dominated Convergence Theorem guarantees that for all $\epsilon>0$, $\|f_k^\epsilon\|_{L^{2n/(2n - \mu)}(\bb R^n)}\to 0$ as $k\to \infty$. Finally, since
\begin{equation*}
	|H(U^k)  - H(U^k - U) - H(U)|
	\leq f_k^\epsilon + \epsilon|U^k - U|^{2^*_\mu}, 
\end{equation*}
Minkowski's inequality gives
\begin{equation*}
\begin{split}
	\||H(U^k) &  - H(U^k - U) - H(U)\|_{L^{\frac{2n}{2n - \mu}}(\bb R^n)}\\
	& \ifdetails{\color{details}
	\; \leq \|f_k^\epsilon + \epsilon|U^k - U|^{2^*_\mu}\|_{L^{\frac{2n}{2n - \mu}}(\bb R^n)}
	}
	\\
	& \fi
	\leq \|f_k^\epsilon\|_{L^{\frac{2n}{2n - \mu}}(\bb R^n)} + \epsilon\||U^k - U|^{2^*_\mu}\|_{L^{\frac{2n}{2n - \mu}}(\bb R^n)}\\
	& = \circ(1) + \epsilon\sup_k\|U^k - U\|_{L^{2^*}(\bb R^n)}^{2^*_\mu}. 
\end{split}
\end{equation*}
\ifdetails{\color{details}
Here is a more detailed version of the previous a.e.\ estimate: 
\begin{equation*}
\begin{split}
	|H(U^k) & - H(U^k - U) - H(U)|\\
	& = |H(U^k) - H(U^k - U) - H(U)| - \epsilon|U^k - U|^{2^*_\mu} + \epsilon|U^k - U|^{2^*_\mu}\\
	& \leq f_k^\epsilon + \epsilon|U^k - U|^{2^*_\mu}, 
\end{split}
\end{equation*}
}\fi
Since $\epsilon> 0$ is arbitrary, and $(U_k - U)_{k = 1}^\infty$ is bounded in $L^{2^*}(\bb R^n)$, the convergence in \eqref{eq:brezis_lieb_last_step} is established. 
\end{proof}
\begin{proof}[Proof of Proposition \ref{prop:sufficient_condition_for_minimizer}]
Let $(U^k)_{k = 1}^\infty\subset \mc M$ be a sequence for which $\Phi(U^k)\to K^{-1}$. Lemma \ref{lemma:Phi_AF_coercive} ensures that $\Phi$ is coercive, so $(U^k)_{k = 1}^\infty$ is bounded in $H_0^1(\Omega; \bb R^d)$. The reflexivity of $H_0^1(\Omega; \bb R^d)$ and the compactness of the embedding $H_0^1(\Omega; \bb R^d)\hookrightarrow L^2(\Omega; \bb R^d)$ guarantees the existence of $U\in H_0^1(\Omega; \bb R^d)$ and a subsequence of $(U^k)_{k = 1}^\infty$ along which all of the following hold:
\begin{equation*}
\begin{split}
	U^k\weakconv U& \text{ weakly in }H_0^1(\Omega; \bb R^d)\\
	U^k\to U & \text{ in }L^2(\Omega; \bb R^d)\\
	U^k\to U & \text{ a.e.\ in }\Omega. 
\end{split}
\end{equation*}
The continuity of $F$ and the a.e.\ convergence $U^k\to U$ ensures that $F(U^k)\to F(U)$ a.e.\ in $\Omega$. Moreover, Lemma \ref{lemma:properties_homogeneous_functions} \ref{item:homogeneous_upper_bound} gives $|F(U^k)|\leq M_{|F|} |U^k|^2$, so from the convergence $U^k\to U$ in $L^2(\Omega; \bb R^d)$ and the Generalized Dominated Convergence Theorem we obtain $\int_\Omega F(U^k)\to \int_\Omega F(U)$. Combining this convergence with the fact that the weak convergence $U^k\weakconv U$ in $H_0^1(\Omega; \bb R^d)$ guarantees
\begin{equation*}
	\|U^k\|_{H_0^1(\Omega; \bb R^d)}^2 - \|U\|_{H_0^1(\Omega; \bb R^d)}^2
	= \|U^k- U\|_{H_0^1(\Omega; \bb R^d)}^2 + \circ(1),
\end{equation*}
we obtain 
\begin{equation}
\label{eq:minimizing_sequence_consequence1}
\begin{split}
	K^{-1} + \circ(1)
	& = \|U^k\|_{H_0^1(\Omega; \bb R^d)}^2 - \int_\Omega F(U^k)\\
	& = \Phi(U) + \|U^k\|_{H_0^1(\Omega; \bb R^d)}^2 - \|U\|_{H_0^1(\Omega; \bb R^d)}^2 + \circ(1)\\
	& = \Phi(U) + \|U^k- U\|_{H_0^1(\Omega; \bb R^d)}^2 + \circ(1)\\
	& \geq K^{-1}\Psi(U)^{1/2^*_\mu} + \|U^k- U\|_{H_0^1(\Omega; \bb R^d)}^2 + \circ(1).\\
\end{split}
\end{equation}
Now from the definition of $\mc N$, and Lemma \ref{lemma:BL_type} 
\ifdetails{\color{details}
(and from the elementary inequality $(a - b)^{1/p} \geq a^{1/p} - b^{1/p}$ which holds whenever $p\geq 1$ and $0\leq b\leq a$)
}\fi
 we have
\begin{equation*}
\begin{split}
	\mc N\|U^k- U\|_{H_0^1(\Omega; \bb R^d)}^2
	& \geq \Psi(U^k - U)^{1/2^*_\mu}\\
	& = \left(\Psi(U^k) -\Psi(U)\right)^{1/2^*_\mu} + \circ(1)\\
	& \geq \left(\Psi(U^k)^{1/2^*_\mu} -\Psi( U)^{1/2^*_\mu}\right)+ \circ(1)\\
	& = \left(1 -\Psi( U)^{1/2^*_\mu}\right)+ \circ(1), 
\end{split}
\end{equation*}
where the assumption $U^k\in \mc M$ was used in the final equality. Bringing this back into estimate \eqref{eq:minimizing_sequence_consequence1} then letting $k\to\infty$ gives
\begin{equation*}
	0 
	\leq(K^{-1}- \mc N^{-1})\left(1 - \Psi(U)^{1/2^*_\mu}\right).  
\end{equation*}
Combining this estimate with the assumption $K> \mc N$ yields $\Psi(U)\geq 1$. Bringing this inequality back into estimate \eqref{eq:minimizing_sequence_consequence1} shows that $U^k\to U$ in $H_0^1(\Omega ;\bb R^d)$, so the continuity of $\Phi$ on $H_0^1(\Omega; \bb R^d)$ gives 
\begin{equation*}
	K^{-1} = \lim_k\Phi(U^k) = \Phi(U).
\end{equation*} 
To complete the proof of the proposition, it remains to show that $U\in \mc M$. Since $U^k\to U$ in $H_0^1(\Omega; \bb R^d)$, inequality \eqref{eq:Psi_embedding} guarantees that $\Psi(U^k- U)\to 0$. Therefore, Lemma \ref{lemma:BL_type} gives
\begin{equation*}
	\Psi(U)
	= \lim_k\left(\Psi(U^k) - \Psi(U^k - U)\right) 
	= 1. 
\end{equation*}
\ifdetails{\color{details}
Note that an alternative technique for showing $U\in \mc M$ based on Fatou's Lemma is possible and we present it here. Since we've already established $\Psi(U)\geq 1$, it only remains to show that $\Psi(U)\leq 1$. Since $H$ is nonnegative and continuous, for each $x\in \bb R^n$ an application of Fatou's Lemma gives
\begin{equation}
\label{eq:first_Fatou}
\begin{split}
	\liminf_kI_\mu[H(U^k)](x)
	& = \liminf_k\int_{\Omega}H(U^k(y))|x - y|^{-\mu}\; \d y\\
	& \geq \int_{\Omega}\liminf_kH(U^k(y))|x - y|^{-\mu}\; \d y\\
	& =\int_{\Omega}H(U(y))|x - y|^{-\mu}\; \d y\\
	& = I_\mu[H(U)](x). 
\end{split}
\end{equation}
Using the assumption $(U^k)_{k = 1}^\infty \subset \mc M$, using another application of Fatou's Lemma and using the elementary fact that $\liminf_k(a_kb_k) = b\liminf_ka_k$ whenever $(a_k)_{k = 1}^\infty$ and $(b_k)_{k = 1}^\infty$ are sequences of nonnegative real numbers for which $b_k\to b$, we have
\begin{equation*}
\begin{split}
	1
	& = \liminf_k\Psi(U^k)\\
	& \geq \int_{\Omega}\liminf_kI_\mu(H(U^k))H(U^k)\\
	& = \int_{\Omega}\liminf_k I_\mu(H(U^k))H(U)\\
	& \geq \int_{\Omega}I_\mu[H(U)]H(U)\\
	& = \Psi(U), 
\end{split}
\end{equation*}
where the penultimate line holds by \eqref{eq:first_Fatou}. 
}\fi 
\end{proof}
%
\section{Existence of Solutions}
\label{s:existence}
This section is devoted to the proof of Theorem \ref{theorem:existence}. The strategy of the proof is to verify that the energy smallness condition in \eqref{eq:energy_threshhold} is satisfied. This is accomplished by constructing $U\in H_0^1(\Omega; \bb R^d)\setminus\{0\}$ for which $Q(U)< \mc N^{-1}$, where $Q$ is as in \eqref{eq:energy_quotient} and $\mc N$ is as in \eqref{eq:energy_threshold}.  
\begin{proof}[Proof of Theorem \ref{theorem:existence}]
We assume with no loss of generality that $0\in \Omega$ and we choose $\delta>0$ for which $B_{4\delta}\subset\Omega$. Let $\eta\in C_c^\infty(\bb R^n; [0, 1])$ satisfy both $\eta\equiv 1$ on $B_\delta$ and $\eta\equiv 0$ on $\bb R^n\setminus B_{2\delta}$. For $\epsilon>0$ define 
\begin{equation*}
	w_\epsilon = \eta u_\epsilon, 
\end{equation*}
where $u_\epsilon = u_{0, \epsilon}$ and $u_{0, \epsilon}$ is as in equation \eqref{eq:normalized_bubbles}. In particular, by the choice of $c_n$ in \eqref{eq:normalized_bubbles}, for every $\epsilon>0$ we have $\|u_\epsilon\|_{L^{2^*}(\bb R^n)} = 1$. The functions $w_\epsilon$ satisfy the estimates
\begin{equation}
\label{eq:cutoff_bubble_grad_norm}
	\|\Grad w_\epsilon\|_2^2 
	= \mc S^{-1} + O(\epsilon^{n - 2})\\
\end{equation}
and
\begin{equation}
\label{eq:cutoff_bubble_2norm}
	\|w_\epsilon\|_2^2 
	= \begin{cases}
	b_{n}\epsilon^2 + O(\epsilon^{n -2})& \text{ if }n>4\\
	b_{4}\epsilon^2|\log\epsilon| + O(\epsilon^2) & \text{ if }n =4, 
	\end{cases}
\end{equation}
where 
\begin{equation}
\label{eq:bnp}
	b_{n}
	= \begin{cases}
	c_{n}^2|\bb S^{n - 1}|\frac{\Gamma\left(\frac{n - 4}2\right)\Gamma\left(\frac{n}2\right)}{2\Gamma(n -2)} & \text{ if }n > 4\\
	c_{4}^2|\bb S^{n - 1}| & \text{ if }n = 4, 
	\end{cases}
\end{equation}
and $\mc S  = \mc S(n)$ is the sharp Sobolev constant defined in equation \eqref{eq:sharp_sobolev_constant}, see for example \cite{GarciaPeral1987}. By hypothesis there is $\theta\in \bb S^{d - 1}$ for which both $H(\theta) = M_H$ and $F(\theta)>0$. Fix any such $\theta$ and define $U^\epsilon \in H_0^1(\Omega; \bb R^d)$ by
\begin{equation*}
	U^\epsilon 
	= (w_\epsilon\theta_1, \ldots,w_\epsilon\theta_d). 
\end{equation*}
We proceed to estimate $Q(U^\epsilon)$ as $\epsilon\to 0$, where  $Q$ is the quotient defined in \eqref{eq:energy_quotient}. First, we have
\begin{equation}
\label{eq:Phi_U_epsilon}
\begin{split}
	\Phi(U^\epsilon)
	& \ifdetails{\color{details}
	\; = \sum_{j = 1}^d\int_{\bb R^n}|\Grad(w_\epsilon\theta)|^2 - \int_{\bb R^n}F(w_\epsilon\theta)
	}
	\\
	& {\color{details}
	\; = \left(\sum_{j = 1}^d\theta_j^2\right)\int_{\bb R^n}|\Grad w_\epsilon|^2- F(\theta)\int_{\bb R^n}w_\epsilon^2
	}
	\\
	& \fi
	= \|\Grad w_\epsilon\|_{L^2(\bb R^n)}^2 - F(\theta)\|w_\epsilon\|_{L^2(\bb R^n)}^2. 
\end{split}
\end{equation}
Next we estimate $\Psi(U^\epsilon)$ as $\epsilon\to 0$. By the degree-$2^*_\mu$ homogeneity of $H$ and the choice of $\theta$ we have
\begin{equation}
\label{eq:epsilon_denom_initial}
\begin{split}
	\Psi(U^\epsilon)
	& \ifdetails{\color{details}
	\; = \int_{\bb R^n}I_\mu[H(w_\epsilon\theta)]H(w_\epsilon\theta)
	}
	\\
	& {\color{details}
	\; = \int_{\bb R^n}I_\mu[w_\epsilon^{2^*_\mu}M_H] w_\epsilon^{2^*_\mu}M_H
	}
	\\
	& \fi
	= M_H^2\int_{\bb R^n}I_\mu[w_\epsilon^{2^*_\mu}]w_\epsilon^{2^*_\mu}. 
\end{split}
\end{equation}
Define $I^\epsilon$ and $J^\epsilon$ by 
\begin{equation*}
\begin{split}
	I^\epsilon & = \int_{\bb R^n\setminus B_\delta}\int_{B_\delta}\frac{u_\epsilon^{2^*_\mu}(y)u_\epsilon^{2^*_\mu}(x)}{|x- y|^\mu}\; \d y\; \d x\\
	J^\epsilon& = \int_{\bb R^n\setminus B_\delta}\int_{\bb R^n\setminus B_\delta}\frac{u_\epsilon^{2^*_\mu}(y)u_\epsilon^{2^*_\mu}(x)}{|x- y|^\mu}\; \d y\; \d x. 
\end{split}
\end{equation*}
Using \eqref{eq:epsilon_denom_initial}, the fact that $\supp w_\epsilon \subset \bar B_{2\delta}$, and the fact that $w_\epsilon \equiv u_\epsilon$ on $B_\delta$ we have
\begin{equation}
\label{eq:decompose_double_integral}
\begin{split}
	M_H^{-2}\Psi(U^\epsilon)
	& = \int_{B_{2\delta}}\int_{B_{2\delta}}\frac{w_\epsilon^{2^*_\mu}(y)w_\epsilon^{2^*_\mu}(x)}{|x- y|^\mu}\; \d y\; \d x\\
	& \geq \int_{B_\delta}\int_{B_\delta}\frac{u_\epsilon^{2^*_\mu}(y)u_\epsilon^{2^*_\mu}(x)}{|x- y|^\mu}\; \d y\; \d x\\
	& = \int_{\bb R^n}\int_{\bb R^n}\frac{u_\epsilon^{2^*_\mu}(y)u_\epsilon^{2^*_\mu}(x)}{|x- y|^\mu}\; \d y\; \d x - 2I^\epsilon - J^\epsilon\\
	& = \left(\mc S_{HL}\|u_\epsilon\|_{D^{1, 2}(\bb R^n)}^2\right)^{2^*_\mu} - 2I^\epsilon - J^\epsilon\\
	& = \left(\frac{\mc S_{HL}}{\mc S}\right)^{2^*_\mu} - 2I^\epsilon - J^\epsilon, 
\end{split}
\end{equation}
where the penultimate line holds as $u_\epsilon$ attains the infimum in \eqref{eq:sharp_choquard_constant} and the final line holds since $u_\epsilon$ attains the infimum in \eqref{eq:sharp_sobolev_constant}. We proceed to estimate $I^\epsilon$ and $J^\epsilon$ from above. To estimate $I^\epsilon$, we first apply the HLS inequality to obtain 
\begin{equation}
\label{eq:I_estimate_apply_HLS}
\begin{split}
	I^\epsilon
	& = \int_{\bb R^n}\int_{\bb R^n}\frac{u_\epsilon^{2^*_\mu}(x)\chi_{\bb R^n\setminus B_\delta}(x)\cdot u_\epsilon^{2^*_\mu}(y)\chi_{B_\delta}(y)}{|x- y|^\mu}\; \d y\; \d x\\
	& \leq \mc H\|u_\epsilon\|_{L^{2^*}(\bb R^n\setminus B_\delta)}^{2^*_\mu}\|u_\epsilon\|_{L^{2^*}(B_\delta)}^{2^*_\mu}. 
\end{split}
\end{equation}
\ifdetails{\color{details}
Here is the same estimate with more detail: 
\begin{equation*}
\begin{split}
	I^\epsilon
	& = \int_{\bb R^n}\int_{\bb R^n}\frac{u_\epsilon^{2^*_\mu}(x)\chi_{\bb R^n\setminus B_\delta}(x)\cdot u_\epsilon^{2^*_\mu}(y)\chi_{B_\delta}(y)}{|x- y|^\mu}\; \d y\; \d x\\
	& \leq \mc H\|u_\epsilon^{2^*_\mu}\|_{L^{\frac{2n}{2n - \mu}}(\bb R^n\setminus B_\delta)} \|u_\epsilon^{2^*_\mu}\|_{L^{\frac{2n}{2n - \mu}}(B_\delta)}\\
	& = \mc H\|u_\epsilon\|_{L^{2^*}(\bb R^n\setminus B_\delta)}^{2^*_\mu}\|u_\epsilon\|_{L^{2^*}(B_\delta)}^{2^*_\mu}. 
\end{split}
\end{equation*}
}\fi 
Moreover, we have both 
\begin{equation}
\label{eq:2*_norm_outside_ball}
\begin{split}
	\|u_\epsilon\|_{L^{2^*}(\bb R^n\setminus B_\delta)}^{2^*}
	& = c_n^{2^*}\int_{\bb R^n\setminus B_\delta}\left(\frac{\epsilon}{\epsilon^2 + |x|^2}\right)^n\; \d x\\
	& \leq c_n^{2^*}\epsilon^n\int_{\bb R^n\setminus B_\delta}|x|^{-2n}\; \d x\\
	& \leq C(n)\left(\frac\epsilon \delta\right)^n
\end{split}
\end{equation}
and, using the change of variable $z = x/\epsilon$, 
\begin{equation}
\label{eq:2*_norm_inside_ball}
\begin{split}
	\|u_\epsilon\|_{L^{2^*}(B_\delta)}^{2^*}
	& \leq \|u_\epsilon\|_{L^{2^*}(\bb R^n)}^{2^*}\\
	& = c_n^{2^*}\int_{\bb R^n}(1 + |z|^2)^{-n}\; \d z\\
	& \leq C(n). 
\end{split}
\end{equation}
Bringing estimates \eqref{eq:2*_norm_outside_ball} and \eqref{eq:2*_norm_inside_ball} back into estimate \eqref{eq:I_estimate_apply_HLS} gives
\begin{equation}
\label{eq:final_I_estimate}
	I^\epsilon
	\leq C(n, \mu)\left(\frac\epsilon\delta\right)^{\frac{2n - \mu}2}. 
\end{equation}
The estimate of $J^\epsilon$ is performed in a similar spirit to the estimate of $I^\epsilon$. In particular, using the HLS inequality and estimate \eqref{eq:2*_norm_outside_ball} one obtains
\begin{equation}
\label{eq:final_J_estimate}
\begin{split}
	J^\epsilon
	& = \int_{\bb R^n}\int_{\bb R^n}\frac{u_\epsilon^{2^*_\mu}(x)\chi_{\bb R^n\setminus B_\delta}(x)\cdot u_\epsilon^{2^*_\mu}(y)\chi_{\bb R^n\setminus B_\delta}(y)}{|x- y|^\mu}\; \d y\; \d x\\
	& \leq \mc H\|u_\epsilon\|_{L^{2^*}(\bb R^n\setminus B_\delta)}^{2\cdot 2^*_\mu}\\
	& \leq C(n, \mu)\left(\frac \epsilon \delta\right)^{2n - \mu}. 
\end{split}
\end{equation}
Bringing estimates \eqref{eq:final_I_estimate} and \eqref{eq:final_J_estimate} back into \eqref{eq:decompose_double_integral} and using Lemma \ref{lemma:H_sharp_constant}, if $\epsilon>0$ is sufficiently small we obtain
\begin{equation}
\label{eq:final_Psi_estimate}
\begin{split}
	\Psi(U^\epsilon)^{-1/2^*_\mu}
	& \ifdetails{\color{details}
	\; \leq \frac{\mc S}{\mc N}\left(1 - C(n, \mu, \delta, M_H)\epsilon^{\frac{2n - \mu}2}\right)^{-1/2^*_\mu}
	}
	\\
	& \fi
	\leq \frac{\mc S}{\mc N}\left(1 + C(n, \mu, \delta, M_H)\epsilon^{\frac{2n-\mu}2}\right).
\end{split}
\end{equation}
\ifdetails{\color{details}
Here is a more detailed version of estimate \eqref{eq:final_Psi_estimate}: First, bringing estimates \eqref{eq:final_I_estimate} and \eqref{eq:final_J_estimate} back into \eqref{eq:decompose_double_integral} gives
\begin{equation*}
\begin{split}
	\Psi(U^\epsilon)
	& \geq \frac{M_H^2\mc S_{HL}^{2^*_\mu}}{\mc S^{2^*_\mu}}- C(n, \mu)\left(\frac\epsilon \delta\right)^{\frac{2n - \mu}{2}}\\
	& \geq \frac{M_H^2\mc S_{HL}^{2^*_\mu}}{\mc S^{2^*_\mu}}\left(1 - C(n, \mu)M_H^{-2}\left(\frac \epsilon\delta\right)^{\frac{2n - \mu}2}\right)\\
	& = \frac{\mc N^{2^*_\mu}}{\mc S^{2^*_\mu}}\left(1 - C(n, \mu)M_H^{-2}\left(\frac \epsilon\delta\right)^{\frac{2n - \mu}2}\right), 
\end{split}
\end{equation*}
where the final equality holds by Lemma \ref{lemma:H_sharp_constant}. 
Then combine this inequality with the elementary inequality $(1 - a)^{-1/p}\leq 1+ \frac{2^{(p + 1)/p}}p a$ which holds whenever $p>1$ and $a\in [0, 1/2]$ (apply the elementary inequality with $p= 2^*_\mu$). 
}\fi 
Therefore, using \eqref{eq:Phi_U_epsilon} and \eqref{eq:final_Psi_estimate} we obtain 
\begin{equation}
\label{eq:bring_estimates_to_Q}
\begin{split}
	Q(U^\epsilon)
	& \ifdetails{\color{details}
	\; = \frac{\Phi(U^\epsilon)}{\Psi(U^\epsilon)^{1/2^*_\mu}}
	}
	\\
	& \fi
	\leq \frac{\mc S}{\mc N}\left(\|\Grad w_\epsilon\|_{L^2(\bb R^n)}^2 - F(\theta)\|w_\epsilon\|_{L^2(\bb R^n)}^2\right)\left(1 + O(\epsilon^{\frac{2n - \mu}2})\right). 
\end{split}
\end{equation}
If $n> 4$ then using \eqref{eq:cutoff_bubble_grad_norm} and \eqref{eq:cutoff_bubble_2norm} in \eqref{eq:bring_estimates_to_Q} we obtain 
\begin{equation}
\label{eq:n_large_Q_estimate}
\begin{split}
	Q(U^\epsilon)
	& \ifdetails{\color{details}
	\; \leq \frac{\mc S}{\mc N}\left(\mc S^{-1} - b_nF(\theta)\epsilon^2 + O(\epsilon^{n - 2})\right)\left(1 + O(\epsilon^{\frac{2n - \mu}2})\right)
	}
	\\
	& \fi 
	\leq \mc N^{-1}\left(1 - b_n\mc S F(\theta)\epsilon^2 + \circ(\epsilon^2)\right).  
\end{split}
\end{equation}
\ifdetails{\color{details}
In estimate \eqref{eq:n_large_Q_estimate} we used the fact that $O(\epsilon^{n - 2}) + O(\epsilon^{\frac{2n - \mu}2}) =\circ(\epsilon^2)$ whenever $n> 4$. 
}\fi
Similarly, if $n = 4$ then using \eqref{eq:cutoff_bubble_grad_norm} and \eqref{eq:cutoff_bubble_2norm} in \eqref{eq:bring_estimates_to_Q} gives
\begin{equation}
\label{eq:n_small_Q_estimate}
\begin{split}
	Q(U^\epsilon)
	& \ifdetails{\color{details}
	\; \leq \frac{\mc S}{\mc N}\left(\mc S^{-1} - b_4F(\theta)\epsilon^2|\log\epsilon| + O(\epsilon^2)\right)\left(1 + O(\epsilon^{\frac{8 - \mu}2})\right)
	}
	\\
	& \fi
	\leq \mc N^{-1}\left(1 - b_4\mc S F(\theta)\epsilon^2|\log\epsilon| + O(\epsilon^2)\right). 
\end{split}
\end{equation}
In view of the positivity of $F(\theta)$, estimates \eqref{eq:n_large_Q_estimate} and \eqref{eq:n_small_Q_estimate} guarantee that if $n\geq 4$ then the inequality $Q(U^\epsilon)< \mc N^{-1}$ holds whenever $\epsilon$ is sufficiently small. Corollary \ref{coro:existence_of_solution} now guarantees the existence of a nontrivial weak solution to problem \eqref{eq:main_problem}. 
\end{proof}
%
\section{Nonexistence of nontrivial solutions}
\label{s:non_existence}
This section is dedicated to the proof of Theorem \ref{theorem:nonexistence}. The proof is based on the following Poho\v zaev identity for solutions to \eqref{eq:main_problem}. 
\begin{lemma}[Poho\v zaev Identity]
\label{lemma:pohozaev}
Let $n\geq 3$, let $\Omega\subset \bb R^n$ be a bounded open set with $\bdy\Omega \in C^{1, 1}$ and let $\mu\in (0, n)$. Suppose $F$ and $H$ satisfy \ref{item:F_homogeneous} and \ref{item:H_positively_homogeneous} respectively and that $f$ and $h$ are as in \eqref{eq:the_gradients}. If $U\in H_0^1(\Omega; \bb R^d)$ is a weak solution to \eqref{eq:main_problem} then 
\begin{equation*}
    \int_\Omega F(U)\; \d x
    = \frac 12 \sum_{i = 1}^d\int_{\bdy\Omega}\left(\frac{\partial u_i}{\partial \nu}\right)^2 \lb x, \nu\rb \; \d S. 
\end{equation*}
\end{lemma}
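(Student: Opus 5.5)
By Proposition \ref{prop:C1alpha_regularity}, the weak solution $U$ lies in $C^{1,\alpha}(\bar\Omega;\bb R^d)$, so all boundary terms below make sense. The plan is to test the $i$-th equation against $x\cdot\Grad u_i$, sum over $i$, and compare the result with the identity obtained by testing the $i$-th equation against $u_i$. Since the equation is only known in the weak sense and $u_i$ need not be $C^2$, I will justify the multiplier computation in one of two ways. One option is to approximate $\Omega$ from inside or to mollify and pass to the limit. The cleaner option, which I will use, is to invoke the Pucci--Serrin/Degiovanni--Musesti--Squassina Pohozaev identity for $C^1(\bar\Omega)$ weak solutions of $-\lap U = g(x)$ with $g\in L^\infty$. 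Here $g = f(U) + I_\mu[H(U)]h(U)$ is bounded because $U$ is bounded.

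The local part is standard. Since $u_i = 0$ on $\bdy\Omega$, we have $\Grad u_i = \frac{\partial u_i}{\partial\nu}\nu$ there. Then
\[
\int_\Omega -\lap u_i\,(x\cdot\Grad u_i)= \frac{2-n}{2}\int_\Omega|\Grad u_i|^2 - \frac12\int_{\bdy\Omega}\Big(\frac{\partial u_i}{\partial\nu}\Big)^2\lb x,\nu\rb\,\d S .
\]
For the linear term, $\sum_i f_i(U)\,x\cdot\Grad u_i = \frac12 x\cdot\Grad(F(U))$, so integrating by parts gives $-\frac n2\int_\Omega F(U)$, with no boundary term because $F(U)=0$ on $\bdy\Omega$.

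The nonlocal term is the main computation. We have $\sum_i h_i(U)\,x\cdot\Grad u_i = \frac1{2^*_\mu}x\cdot\Grad(H(U))$. Write $\Psi(U) = \iint H(U(x))H(U(y))|x-y|^{-\mu}$. I will compute
\[
\int I_\mu[H(U)]\,x\cdot\Grad H(U)
\]
by symmetrizing in $x$ and $y$ and using $(x-y)\cdot\Grad_x|x-y|^{-\mu} = -\mu|x-y|^{-\mu}$. The outcome is
\[
\int I_\mu[H(U)]\,x\cdot\Grad H(U) = -\frac{2n-\mu}{2}\Psi(U),
\]
so the nonlocal contribution equals $-\frac{2n-\mu}{2\cdot 2^*_\mu}\Psi(U) = -\frac{n-2}{2}\Psi(U)$. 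The equivalent scaling argument is cleaner: $\frac{\d}{\d t}\big|_{t=1}\Psi(U(t\,\cdot))$ can be computed by change of variables, which avoids boundary issues since $H(U)$ extends by zero and lies in $W^{1,1}(\bb R^n)$. The rigorous justification of this differentiation, with $\Grad H(U)$ only in $L^\infty$ and compactly supported, is the step I expect to need care. I will handle it by approximating $H(U)$ with smooth compactly supported functions $\varphi_k\to H(U)$ in $W^{1,1}$ and $L^{2n/(2n-\mu)}$, and using HLS to pass to the limit.

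Collecting terms gives
\[
\frac{n-2}{2}\|\Grad U\|_2^2 + \frac12\sum_{i=1}^d\int_{\bdy\Omega}\Big(\frac{\partial u_i}{\partial\nu}\Big)^2\lb x,\nu\rb\,\d S = \frac n2\int_\Omega F(U) + \frac{n-2}{2}\Psi(U).
\]
Testing the equation with $U$ and using Lemma \ref{lemma:properties_homogeneous_functions}\ref{item:homogeneous_grad_dot} gives
\[
\|\Grad U\|_2^2 = \int_\Omega F(U) + \Psi(U).
\]
Multiplying this by $\frac{n-2}{2}$ and subtracting cancels $\Psi(U)$ and leaves $\int_\Omega F(U) = \frac12\sum_i\int_{\bdy\Omega}(\partial_\nu u_i)^2\lb x,\nu\rb\,\d S$, which is the identity claimed in the lemma.
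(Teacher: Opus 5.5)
Your proposal is correct and follows essentially the same route as the paper. That route is $C^{1,\alpha}$ regularity, then the Degiovanni--Musesti--Squassina form of the Pucci--Serrin identity applied to each $u_i$, then the symmetrization/scaling computation $\int I_\mu[H(U)]\,x\cdot\nabla H(U)=-\frac{2n-\mu}{2}\Psi(U)$, and finally testing with $U$ to eliminate $\Psi(U)$. The approximation step you flag is not needed: since $\nabla H(0)=0$ and $U=0$ on $\partial\Omega$, the zero extension $(H\circ U)\chi_\Omega$ is already in $C^1_c(\mathbb{R}^n)$, which is how the paper justifies differentiating under the integral (and $g$ is in fact continuous on $\bar\Omega$, not merely bounded, as the cited identity requires).
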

The following lemma will be used in the proof of Lemma \ref{lemma:pohozaev}. It is a special case of Theorem 2 of \cite{Degiovanni2003} where the $C^2(\Omega)\cap C^1(\bar\Omega)$-smoothness assumptions of the general variational identity of Pucci and Serrin \cite{PucciSerrin1986} were relaxed. 
\begin{lemma}
\label{lemma:poissons_pohozaev}
Let $\Omega\subset\bb R^n$ be a bounded open set with $\bdy\Omega\in C^1$ and let $g\in C^0(\bar\Omega)$. If $u\in C^1(\bar\Omega)$ satisfies
\begin{equation*}
\begin{cases}
    -\lap u= g & \text{ in }\Omega\\
    u = 0 & \text{ on }\bdy \Omega
\end{cases}
\end{equation*}
in the weak sense then 
\begin{equation*}
	\frac{n - 2}{2}\int_\Omega |\Grad u|^2 + \int_\Omega g\lb x, \Grad u\rb
	= -\frac 12\int_{\bdy\Omega}\left(\frac{\partial u}{\partial \nu}\right)^2\lb x, \nu\rb\; \d S. 
\end{equation*}
\end{lemma}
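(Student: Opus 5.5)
The plan is to run the classical Pohožaev computation on the vector field
\begin{equation*}
	V = \lb x, \Grad u\rb \Grad u - \tfrac12 |\Grad u|^2 x ,
\end{equation*}
and to handle the limited regularity ($u\in C^1(\bar\Omega)$ only, $\bdy\Omega\in C^1$ only) by a divergence theorem for fields that are continuous up to the boundary and have integrable divergence.

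\textbf{Step 1: interior regularity and the pointwise identity.} Since $-\lap u = g$ weakly with $g\in C^0(\bar\Omega)\subset L^\infty(\Omega)$, interior Calder\'on--Zygmund estimates give $u\in W^{2,p}_{\mathrm{loc}}(\Omega)$ for every $p<\infty$. Combined with $\Grad u\in L^\infty$, this gives $V\in W^{1,1}_{\mathrm{loc}}(\Omega)\cap C^0(\bar\Omega;\bb R^n)$. The product rule then holds a.e. A direct computation using $\partial_j(x_i\partial_i u) = \partial_j u + x_i\partial_{ij}u$ and the symmetry of $D^2u$ gives
\begin{equation*}
	\operatorname{div} V = \lb x, \Grad u\rb\lap u + \Big(1 - \frac n2\Big)|\Grad u|^2 = -g\lb x, \Grad u\rb - \frac{n-2}{2}|\Grad u|^2 \quad\text{a.e. in }\Omega .
\end{equation*}
In particular $\operatorname{div}V\in L^\infty(\Omega)$, and this is also the distributional divergence of $V$ in $\Omega$.

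\textbf{Step 2: the boundary term.} Because $u\in C^1(\bar\Omega)$ and $u=0$ on $\bdy\Omega$, the tangential derivatives of $u$ vanish there. Hence $\Grad u = \frac{\partial u}{\partial\nu}\nu$ on $\bdy\Omega$, and
\begin{equation*}
	V\cdot\nu = \Big(\frac{\partial u}{\partial\nu}\Big)^2\lb x,\nu\rb - \frac12\Big(\frac{\partial u}{\partial\nu}\Big)^2\lb x,\nu\rb = \frac12\Big(\frac{\partial u}{\partial\nu}\Big)^2\lb x,\nu\rb .
\end{equation*}
So the lemma reduces to the identity $\int_\Omega\operatorname{div}V = \int_{\bdy\Omega}V\cdot\nu\,\d S$. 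Combining it with Step 1 gives exactly the claimed formula after multiplying through by $-1$.

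\textbf{Step 3 (main obstacle): divergence theorem for $V\in C^0(\bar\Omega)$ with $\operatorname{div}V\in L^1(\Omega)$.} For $C^1$ (indeed Lipschitz) boundaries, I would first choose smooth domains $\Omega_j\Subset\Omega$ exhausting $\Omega$. One can build them locally by shifting the boundary graphs inward and mollifying. These domains should come with homeomorphisms $\Lambda_j:\bdy\Omega\to\bdy\Omega_j$ such that:
\begin{itemize}
	\item $\Lambda_j\to\mathrm{id}$ uniformly;
	\item the normals satisfy $\nu_j\circ\Lambda_j\to\nu$ pointwise a.e. (uniformly in the $C^1$ case);
	\item the surface Jacobians are uniformly bounded and converge to $1$.
\end{itemize}
On each $\Omega_j$ the field $V$ lies in $W^{1,1}(\Omega_j)\cap C^0(\bar\Omega_j)$, so the classical divergence theorem applies there, via mollification of $V$ on the smooth domain $\Omega_j$. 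Passing to the limit $j\to\infty$:
\begin{itemize}
	\item $\int_{\Omega_j}\operatorname{div}V\to\int_\Omega\operatorname{div}V$ by dominated convergence, using $\operatorname{div}V\in L^\infty$;
	\item $\int_{\bdy\Omega_j}V\cdot\nu_j\,\d S\to\int_{\bdy\Omega}V\cdot\nu\,\d S$ by the uniform continuity of $V$ on $\bar\Omega$ and the stated convergence of $\Lambda_j$, $\nu_j$ and the Jacobians.
\end{itemize}

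Constructing the approximating domains with these properties is the only technical point. It is where the $C^1$ hypothesis on $\bdy\Omega$ is used, and it is the step I would check most carefully, or else cite from Ne\v cas or Verchota. The rest of the argument is the routine Pohožaev computation.
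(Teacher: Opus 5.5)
Your argument is correct, but it takes a different route from the paper. The paper gives no proof of this lemma; it invokes Theorem 2 of \cite{Degiovanni2003}. That result is a Pucci--Serrin-type identity for $C^1(\bar\Omega)$ solutions of Euler--Lagrange equations of general Lagrangians $L(x,u,\nabla u)$. In that generality no interior second-derivative regularity is available, which is why a more delicate argument is needed there.

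You instead use the linearity of $-\lap$. Calder\'on--Zygmund gives $u\in W^{2,p}_{\mathrm{loc}}(\Omega)$, so the Poho\v zaev field $V$ is continuous on $\bar\Omega$ with bounded distributional divergence. The lemma then reduces to a Gauss--Green formula up to a $C^1$ boundary. Your divergence computation in Step 1, the boundary reduction in Step 2 (tangential derivatives vanish because $u\in C^1(\bar\Omega)$ and $u=0$ on $\bdy\Omega$), and the final sign bookkeeping all check out. The citation buys generality and brevity; your route buys a self-contained proof for the Laplacian.

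On Step 3: citing Verchota's approximation theorem does supply exactly the properties you list, so the argument closes. For a $C^1$ boundary, however, it is heavier than necessary, and you can avoid building global smooth approximating domains.
\begin{itemize}
\item Take a partition of unity $\{\zeta_k\}$ subordinate to one set $W_0\Subset\Omega$ and finitely many cylinders in which $\Omega$ is the supergraph $\{x_n>\psi_k(x')\}$ of a $C^1$ function.
\item The interior piece $\zeta_0V$ is a compactly supported $W^{1,1}$ field, so its divergence integrates to zero.
\item For each boundary piece, apply Gauss--Green on the translated supergraphs $\{x_n>\psi_k(x')+t\}$, $t>0$. Near these sets $\zeta_kV$ is $W^{1,p}$ and continuous, so mollification suffices.
\item As functions of $x'$, the translated boundary has the same normal $(\nabla\psi_k,-1)/\sqrt{1+|\nabla\psi_k|^2}$ and the same area element $\sqrt{1+|\nabla\psi_k|^2}\,\d x'$.
\item Letting $t\to0^+$ therefore uses only uniform continuity of $V$ on $\bar\Omega$ for the boundary term and dominated convergence for the volume term.
\end{itemize}
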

\begin{proof}[Proof of Lemma \ref{lemma:pohozaev}]
For each $i\in \{1, \ldots, d\}$, $u_i\in H_0^1(\Omega)$ satisfies
\begin{equation}
\label{eq:ui_PDE}
    -\lap u_i = f_i(U) + I_\mu[H(U)]h_i(U)
\end{equation}
in the weak sense and Proposition \ref{prop:C1alpha_regularity} guarantees that $u_i\in C^1(\bar\Omega)$. Applying Lemma \ref{lemma:poissons_pohozaev} to $u_i$ with $g = g_i = f_i(U) + I_\mu[H(U)]h_i(U)$ then summing the resulting identities over $i$ and using both \eqref{eq:the_gradients} and Lemma \ref{lemma:properties_homogeneous_functions}\ref{item:homogeneous_grad_dot}, we have
\begin{equation}
\label{eq:apply_poissons_pohozaev}
\begin{split}
    -\frac{n - 2}2\sum_{i = 1}^d& \int_\Omega |\Grad u_i|^2 - \frac 12\sum_{i = 1}^d\int_{\bdy\Omega}\left(\frac{\partial u_i}{\partial \nu}\right)^2\lb x, \nu\rb\; \d S\\
    & = \sum_{i = 1}^d\int_\Omega \lb x, \Grad u_i\rb (f_i(U) + I_\mu[H(U)]h_i(U))\\
    & = \frac 12\int_\Omega \lb x, \Grad(F\circ U)\rb + \frac 1{2^*_\mu}\int_\Omega I_\mu[H(U)]\lb x, \Grad(H\circ U)\rb\\
    & = \frac{A}{2^*_\mu} - \frac n2\int_\Omega F(U),  
\end{split}
\end{equation}
where
\begin{equation*}
    A = \int_{\bb R^n}\int_{\bb R^n}\frac{v(y)\lb x, \Grad v(x)\rb}{|x - y|^\mu}\; \d y\;\d x
\end{equation*}
and $v\in C^1(\bb R^n)$ is defined by $v = (H\circ U)\chi_\Omega$. 
\ifdetails{\color{details}
To see that $v\in C^1(\bb R^n)$, note that in $\bb R^n\setminus \overline \Omega$ we have both $v\equiv 0$ and $\Grad v\equiv 0$. Since $U\in C^1(\bar\Omega; \bb R^d)$ and $H\in C^1(\bb R^d)$ we have $H\circ U\in C^1(\Omega)$ as the composition of $C^1$ functions. For $x\in \bdy\Omega$, and any direction $\tau\in \bb S^{n - 1}$ tangent to $\bdy \Omega$ we have $\partial_\tau(H\circ U)(x) = 0$. Consider the normal derivative of $H\circ U$ at $x$. For $s>0$ we have $(H\circ U)(x + s\nu) = 0 = H\circ U(x)$ and hence 
\begin{equation*}
    \lim_{s\to 0^+}\frac{(H\circ U)(x + s\nu) - (H\circ U)(x)}{s} = 0, 
\end{equation*}
where $\nu = \nu(x)$ is the outward unit normal to $\bdy \Omega$ at $x$. Next we consider the same limit but with $s\to 0^-$. Since $U\in C^1(\overline\Omega, \bb R^d)$, for all $i\in \{1, \ldots, d\}$, 
\begin{equation*}
    \lim_{s\to 0^-}\frac{u_i(x + s\nu) - u_i(x)}{s}
    = \frac{\partial u_i}{\partial \nu}(x)
\end{equation*}
exists and is finite (here $x + s\nu$ approaches $x$ from within $\Omega$). Let $y = x + s\nu$ and define $\xi^0, \ldots, \xi^{d}\in \bb R^n$ by 
\begin{equation*}
\begin{split}
    \xi^0	& = (u_1(y), \ldots, u_d(y))= U(x+ s\nu)\\
    \xi^{j} & = (u_1(x), \ldots, u_j(x), u_{j+ 1}(y), \ldots u_d(y))\quad \text{ for }j = 1, \ldots, d- 1\\
    \xi^{d} & = (u_1(x), \ldots, u_d(x))= U(x). 
\end{split}
\end{equation*}
(the index on $\xi^j$ indicates the last coordinate in which $u_j(x)$ appears before the $u_j(y)$'s start). Note that continuity of $U$ at $x$ guarantees that $\xi^j\to \xi^d$ as $s\to 0^-$ for all $j = 0, \ldots, d$. For any $s< 0$, by the Mean Value Theorem we have
\begin{equation*}
\begin{split}
    \lefteqn{\frac{(H\circ U)(x + s\nu) - (H\circ U)(x))}{s}}\\
    & = \frac{H(\xi^0) - H(\xi^1)}{s} + \frac{H(\xi^1) - H(\xi^2)}{s}+ \ldots + \frac{H(\xi^{d- 1})- H(\xi^d)}{s}\\
    & = \frac{H(\xi^0) - H(\xi^1)}{u_1(y) - u_1(x)}\cdot\frac{u_1(y) - u_1(x)}{s} + \frac{H(\xi^1) - H(\xi^2)}{u_2(y) - u_2(x)}\cdot \frac{u_2(y) - u_2(x)}{s}+\ldots\\
    &  \quad \ldots + \frac{H(\xi^{d- 1})- H(\xi^d)}{u_d(y) - u_d(x)}\cdot \frac{u_d(y) - u_d(x)}{s}\\
    &= \sum_{i = 1}^d(\partial_iH)(\zeta^i)\frac{\partial u_i}{\partial \nu}(z^i)
\end{split}
\end{equation*}
for some $\zeta^i\in \bb R^d$ between $\xi^i$ and $\xi^{i -1}$ and some $z^i\in\Omega$ between $x$ and $y$ (again, $\nu=\nu(x)$ is normal to $\bdy\Omega$ at $x\in \bdy \Omega$ even though evaluation of $\frac{\partial u_i}{\partial \nu}$ occurs at $z^i\in \Omega$). Since $H\in C^1(\bb R^d)$ and $U\in C^1(\bar\Omega; \bb R^d)$, and since $U(x) = 0$, letting $s\to 0^-$ in this string of equalities gives
\begin{equation*}
    \lim_{s\to 0^-}\frac{(H\circ U)(x + s\nu) - (H\circ U)(x)}{s} 
    = \lb\Grad H(0),\frac{\partial U}{\partial \nu}(x)\rb 
\end{equation*}
Since $H$ is homogeneous of degree $2^*_\mu> 1$, Lemma \ref{lemma:properties_homogeneous_functions}\ref{item:derivatives_homogeneous} guarantees that $\Grad H$ is homogeneous of degree $2^*_\mu - 1> 0$. Consequently, Lemma \ref{lemma:properties_homogeneous_functions}\ref{item:homogeneous_zero_zero} guarantees that $\Grad H(0) = 0$. We conclude that 
\begin{equation*}
    \lim_{s\to 0}\frac{(H\circ U)(x + s\nu) - (H\circ U)(x)}{s} = 0. 
\end{equation*}
Since all directional derivatives of $H\circ U$ are zero at $x$ we conclude that $H\circ U$ is $C^1$ at $x$.  
}\fi
Using Fubini's theorem it is routinely verified that
\begin{equation}
\label{eq:two_A}
    2A = \int_{\bb R^n}\int_{\bb R^n}\frac{v(y)\lb x, \Grad v(x)\rb + v(x)\lb y, \Grad v(y)\rb}{|x - y|^\mu}\; \d y\;\d x. 
\end{equation}
\ifdetails{\color{details}
Indeed, Fubini's Theorem gives
\begin{equation*}
\begin{split}
    A 
    & = \int_{\bb R^n}\int_{\bb R^n}\frac{v(y)\lb x, \Grad v(x)\rb}{|x - y|^\mu}\; \d y\;\d x\\
    & = \int_{\bb R^n}\int_{\bb R^n}\frac{v(y)\lb x, \Grad v(x)\rb}{|x - y|^\mu}\; \d x\;\d y\\
    & = \int_{\bb R^n}\int_{\bb R^n}\frac{v(x)\lb y, \Grad v(y)\rb}{|x - y|^\mu}\; \d y\;\d x,
\end{split}
\end{equation*}
where the final equality follows by relabeling $x\leftrightarrow y$. Summing the two expressions for $A$ (the one at the beginning of this string of equalities and the one at the end of this string of equalities) gives the asserted expression for $2A$. 
}\fi
Moreover, for any $t\in (\frac 12, 2)$, using the change of variable $x\mapsto tx$ and the change of variable $y\mapsto ty$ gives
\begin{equation*}
	\int_{\bb R^n}\int_{\bb R^n}\frac{v(x)v(y)}{|x - y|^\mu}\; \d y\; \d x
	= t^{2n - \mu}\int_{\bb R^n}\int_{\bb R^n}\frac{v(tx)v(ty)}{|x - y|^\mu}\; \d y\; \d x. 
\end{equation*}
Differenting this identity relative to $t$, evaluating the resulting equality at $t = 1$, and using \eqref{eq:two_A} gives
\begin{equation}
\label{eq:A_computed}
    A = -\frac{2n - \mu}{2}\int_{\bb R^n}\int_{\bb R^n}\frac{v(x)v(y)}{|x - y|^\mu}\; \d y\; \d x. 
\end{equation}
\ifdetails{\color{details}
Indeed, 
\begin{equation*}
\begin{split}
    0
    = & \; \left.\frac{\d}{\d t}\right|_{t = 1}\int_{\bb R^n}\int_{\bb R^n}\frac{v(x)v(y)}{|x - y|^\mu}\; \d y\; \d x\\
    = & \; (2n - \mu)\int_{\bb R^n}\int_{\bb R^n}\frac{v(x)v(y)}{|x - y|^\mu}\; \d y\; \d x\\
    & + \int_{\bb R^n}\int_{\bb R^n}\frac{v(y)\lb x, \Grad v(x)\rb + v(x)\lb y, \Grad v(y)\rb}{|x - y|^\mu}\; \d y\;\d x\\
    = & \;  (2n - \mu)\int_{\bb R^n}\int_{\bb R^n}\frac{v(x)v(y)}{|x - y|^\mu}\; \d y\; \d x + 2A 
\end{split}
\end{equation*}
where the differentiation under the integral is justified since $v\in C_c^1(\bb R^n)$. 
}\fi
Independently, for each $i\in \{1, \ldots, d\}$, testing equation \eqref{eq:ui_PDE} against $u_i$, summing the resulting identities over $i$, and using both \eqref{eq:the_gradients} and Lemma \ref{lemma:properties_homogeneous_functions}\ref{item:homogeneous_grad_dot} gives
\begin{equation}
\label{eq:test_against_ui}
\begin{split}
    \sum_{i = 1}^d\int_\Omega |\Grad u_i|^2
    & \ifdetails{\color{details}
    \; = \sum_{i = 1}^d\int_\Omega u_i\left(f_i(U) + I_\mu[H(U)]h_i(U)\right)
    }
    \\
    & {\color{details}
    \; = \sum_{i = 1}^d\int_\Omega u_i\left(\frac12(\partial_i F)(U) + \frac{1}{2^*_\mu}I_\mu[H(U)](\partial_i H)(U)\right)
    }
    \\
    & \fi
    = \int_\Omega(F(U) + I_\mu[H(U)]H(U)). 
\end{split}
\end{equation}
Bringing \eqref{eq:A_computed} and \eqref{eq:test_against_ui} back to \eqref{eq:apply_poissons_pohozaev} yields the asserted equality. 
\end{proof}
\begin{proof}[Proof of Theorem \ref{theorem:nonexistence}]
In view of the translation invariance of problem \eqref{eq:main_problem}, we may assume with no loss of generality that $\Omega$ is star-shaped with respect to the origin and thus $\lb x, \nu\rb \geq 0$ for all $x\in \bdy \Omega$. For any weak solution $U\in H_0^1(\Omega; \bb R^d)$ to problem \eqref{eq:main_problem}, Lemma \ref{lemma:pohozaev} gives
\begin{equation*}
\begin{split}
    0
    & \leq \sum_{i = 1}^d\int_{\bdy \Omega}\left(\frac{\partial u_i}{\partial\nu}\right)^2\lb x, \nu\rb\; \d S\\
    & = 2\int_\Omega F(U)\\
    & \leq 2M_F\int_\Omega |U|^2\\
    & \leq 0,  
\end{split}
\end{equation*}
and hence equality holds throughout this string of inequalities. Since $M_F< 0$, we conclude that $\|U\|_{L^2(\Omega; \bb R^d)} = 0$. 
\end{proof}
\section{Appendix}
\label{s:appendix}
\begin{lemma}
\label{lemma:simple_construction}
Let $\Omega \subset \bb R^n$ be open. If $F\in C^0(\bb R^d)$ is homogeneous of degree $1$ and if $M_F> 0$ then there is $U\in H_0^1(\Omega; \bb R^d)$ such that $\int_\Omega F(U)> 0$. 
\end{lemma}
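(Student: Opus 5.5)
The plan is to take a direction on which $F$ is positive and multiply it by a nonnegative scalar bump function. Since $M_F>0$ and $\bb S^{d-1}$ is compact, Lemma \ref{lemma:properties_homogeneous_functions}\ref{item:homogeneous_upper_bound} provides $s\in \bb S^{d-1}$ with $F(s) = M_F>0$. (That lemma is stated for degree $q>0$, so it applies whatever the degree of homogeneity; here it is $1$, and in the main text it was used with degree $2$. Nothing below depends on the degree.)

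Since $\Omega$ is open and nonempty, I choose a ball $B\subset\Omega$ and a function $\varphi\in C_c^\infty(B)$ with $\varphi\geq 0$ and $\varphi\not\equiv 0$. I then set $U = (s_1\varphi,\ldots,s_d\varphi) = \varphi s$. Each coordinate lies in $C_c^\infty(\Omega)\subset H_0^1(\Omega)$, so $U\in H_0^1(\Omega;\bb R^d)$.

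Next I compute $F(U)$ pointwise.
\begin{itemize}
\item Where $\varphi(x)>0$, homogeneity of degree $q$ (here $q=1$) gives $F(U(x)) = F(\varphi(x)s) = \varphi(x)^q F(s) = M_F\varphi(x)^q$.
\item Where $\varphi(x)=0$, we have $F(U(x)) = F(0) = 0$ by Lemma \ref{lemma:properties_homogeneous_functions}\ref{item:homogeneous_zero_zero}.
\end{itemize}
Hence $\int_\Omega F(U) = M_F\int_\Omega \varphi^q$. This is finite because $\varphi$ is bounded with compact support, and it is strictly positive because $\varphi$ is continuous, nonnegative and not identically zero, so $\varphi^q>0$ on a nonempty open set.

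There is no real obstacle in this argument. The only points needing care are that nonnegativity of $\varphi$ lets us use homogeneity, which is defined only for positive scalars $\varrho$, and that $F(0)=0$ takes care of the region where $\varphi$ vanishes.
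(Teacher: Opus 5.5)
Your proof is correct and takes essentially the paper's approach: choose $\sigma\in\mathbb S^{d-1}$ with $F(\sigma)=M_F$ and multiply it by a nonnegative scalar cutoff supported in a ball contained in $\Omega$. The paper additionally uses continuity of $F$ near $\sigma$ to get $F(U)>M_F/2$ on a small ball. Your exact identity $F(\varphi s)=M_F\varphi^q$ gives positivity directly and works for any degree $q>0$; this matters because the paper's own computation $(1-\eta)^2M_F$ is really the degree-$2$ case, which is how the lemma is actually used.
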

\begin{proof}[Proof of Lemma \ref{lemma:simple_construction}]
By the continuity of $F$ and the assumption $M_F>0$ there is $\sigma\in \bb S^{d - 1}$ for which $F(\sigma) = M_F$ and there is $\epsilon>0$ such that $F(s)> M_F/2$ for all $s\in B(\sigma, \epsilon)$. Let $\eta\in C_c^\infty(\bb R)$ satisfy
\begin{equation*}
	\eta(t) = \begin{cases}
	0 & \text{ if } t\leq 0\\
	1 & \text{ if }t\geq 1. 
	\end{cases}
\end{equation*}
For $B(x_0, r)\subset \Omega$, define $U\in C_c^\infty(\Omega; \bb R^d)$ by 
\begin{equation*}
	U(x) 
	= \left( 1- \eta\left(\frac{|x - x_0|^2}{r^2}\right)\right)\sigma. 
\end{equation*}
By the choice of $\sigma$ and the homogeneity of $F$, for any $x\in B(x_0, r)$ we have
\begin{equation*}
	F(U(x))
	= \left( 1- \eta\left(\frac{|x - x_0|^2}{r^2}\right)\right)^2M_F
	\geq 0. 
\end{equation*}
Moreover, there is $\delta\in(0, r)$ such that $U(x)\in B(\sigma, \epsilon)$ whenever $x\in B(x_0, \delta)$. For any such $\delta$, 
\begin{equation*}
	\int_\Omega F(U)\; \d x
	\geq \int_{B(x_0, \delta)}F(U)\; \d x
	\geq \frac{M_F}2|B(x_0, \delta)|
	> 0. 
\end{equation*}
\end{proof}
%

\subsection{Regularity}
\label{ss:regularity}
This subsection is dedicated to the proof of Proposition \ref{prop:C1alpha_regularity}, which will be accomplished with the aid of a series of lemmata. For $v\in \bb R$ and $m> 0$ let us denote the truncation at level $m$ of $v$ by 
\begin{equation}
\label{eq:level_m_truncation}
    \tau_m(v) = \begin{cases}
    -m & \text{ if }v< -m\\
    v & \text{ if }|v|\leq m\\
    m & \text{ if }v> m. 
    \end{cases}
\end{equation}
With this notation we have
\begin{equation*}
    |\tau_m(v)| = \tau_m(|v|) = \begin{cases}
    v& \text{ if }|v|\leq m\\
    m& \text{ if }|v|> m, 
    \end{cases}
\end{equation*}
and if $V = (v^1, \ldots, v^d)\in \bb R^d$ then
\begin{equation*}
   \tau_m(|V|) = \begin{cases}
        m & \text{ if }|V|> m\\
        |V|& \text{ if }|V|\leq m.
    \end{cases}
\end{equation*}
Later we will consider these truncation notations pointwise for functions $v:\Omega \to \bb R$ and $V:\Omega \to \bb R^d$. The following lemma relates powers of the level-$m$ turncation of the length of $V\in \bb R^d$ to the sum of powers of the level-$m$ truncations of the coordinates of $V$. Since the proof is elementary, we omit the details.  
\begin{lemma}
\label{lemma:vector_truncation}
For every $t\in [1, \infty)$ there is a constant $C_1 = C_1(d, t)>0$ such that for every $V\in \bb R^d$ and every $m>0$ there holds
\begin{equation}
\label{eq:vector_truncation_estimate}
    C_1^{-1}\tau_m(|V|)^t
    \leq \sum_{i = 1}^d|\tau_m(v_i)|^t
    \leq C_1\tau_m(|V|)^t. 
\end{equation}
\end{lemma}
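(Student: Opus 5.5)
Write $a_i=|\tau_m(v_i)|=\min\{|v_i|,m\}$ and $b=\tau_m(|V|)=\min\{|V|,m\}$. The plan is to compare the vector $(a_1,\ldots,a_d)$ with $b$ coordinatewise, and then compare the $\ell^t$ norm of that vector with its maximum entry. All constants come from the equivalence of norms on $\bb R^d$.

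\textbf{Step 1: coordinatewise comparison.} For each $i$ we have $|v_i|\leq |V|$, and $\min\{\cdot,m\}$ is nondecreasing. Hence $a_i\leq b$ for every $i$. Conversely, choose $i_0$ with $|v_{i_0}|=\max_i|v_i|$, so that $|v_{i_0}|\geq d^{-1/2}|V|$. Then
\begin{equation*}
	a_{i_0}
	= \min\{|v_{i_0}|,m\}
	\geq \min\{d^{-1/2}|V|, m\}
	\geq d^{-1/2}\min\{|V|,m\}
	= d^{-1/2}b,
\end{equation*}
where the last inequality uses $d^{-1/2}\leq 1$.

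\textbf{Step 2: raise to the power $t$ and sum.} Since $t\geq 1>0$, the map $s\mapsto s^t$ is increasing on $[0,\infty)$. From $a_i\leq b$ for all $i$ we get the upper bound
\begin{equation*}
	\sum_{i=1}^d a_i^t\leq d\, b^t .
\end{equation*}
From $a_{i_0}\geq d^{-1/2}b$ and the nonnegativity of the remaining terms we get the lower bound
\begin{equation*}
	\sum_{i=1}^d a_i^t\geq a_{i_0}^t\geq d^{-t/2}b^t .
\end{equation*}
Taking $C_1=\max\{d,\,d^{t/2}\}$ gives both inequalities in \eqref{eq:vector_truncation_estimate}. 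This $C_1$ depends only on $d$ and $t$, and the bounds hold uniformly in $m>0$ and $V\in\bb R^d$.

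\textbf{Anticipated difficulty.} The only point needing care is that truncation does not commute with the Euclidean norm. The lower bound handles this through the inequality $\min\{cx,m\}\geq c\min\{x,m\}$ for $c\in(0,1]$. This inequality holds because if $cx\geq m$ then $m\geq cm\geq c\min\{x,m\}$, and otherwise $cx\geq c\min\{x,m\}$ directly. The case $V=0$ is trivial, since both sides vanish.
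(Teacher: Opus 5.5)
Your proof is correct and rests on the same two comparisons as the paper's: $|v_i|\le|V|$ for the upper bound, and $\max_i|v_i|\ge d^{-1/2}|V|$ for the lower bound. The paper reaches these through a case split on whether $|V|\le m$ and whether some $|v_i|>m$, using a power-mean inequality in the case $|V|\le m$. Your use of the monotonicity of $s\mapsto\min\{s,m\}$ together with $\min\{cs,m\}\ge c\min\{s,m\}$ for $c\in(0,1]$ handles all cases at once and gives the explicit constant $C_1=\max\{d,d^{t/2}\}$.
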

\ifdetails{\color{details}
\begin{proof}
Let $m>0$ and define the subsets $A_m, B_m\subset\{1, \ldots, d\}$ by 
\begin{equation*}
\begin{split}
    A_m & = \{j\in \{1, \ldots, d\}: |v_j|> m\}\\
    B_m & = \{j\in \{1, \ldots, d\}: |v_j|\leq m\}. 
\end{split}
\end{equation*}
First we verify the upper bound asserted in \eqref{eq:vector_truncation_estimate} by separately considering the case $A_m \neq\emptyset$ and the case $A_m = \emptyset$. 
\begin{enumerate}[label = {\bf Case \arabic*.}, wide = 0pt]
    \item $A_m\neq\emptyset$.\\
    In this case there is $i\in \{1, \ldots, d\}$ such that $|V|\geq |v_i|> m$ so $\tau_m(|V|) = m$. Therefore, 
    \begin{equation*}
    \begin{split}
        \sum_{j= 1}^d|\tau_m(v_j)|^t
        & = \sum_{j\in A_m}^d|\tau_m(v_j)|^t + \sum_{j\in B_m}^d|\tau_m(v_j)|^t\\
        & = |A_m|m^t + \sum_{j\in B_m}^d|\tau_m(v_j)|^t\\
        & \leq |A_m|m^t + |B_m|m^t\\
        & = d\cdot m^t\\
        & = d\cdot \tau_m(|V|)^t. 
    \end{split}
    \end{equation*}
    \item $A_m = \emptyset$.\\
    In this case $|v_j|\leq m$ for all $j\in \{1, \ldots, d\}$ so 
    \begin{equation*}
        \sum_{j = 1}^d|\tau_m(v_j)|^t
        \leq d\cdot m^t. 
    \end{equation*}
    In the subcase that both $A_m = \emptyset$ and $|V|> m$ we have $\tau_m(|V|) = m$ so 
    \begin{equation*}
        \sum_{j= 1}^d|\tau_m(v_j)|^t
        \leq d\cdot m^t
        = d\tau_m(|V|)^t. 
    \end{equation*}
    In the subcase that both $A_m = \emptyset$ and $|V|\leq m$ we have  $|\tau_m(v_j)| = |v_j|$ for all $j$ so 
    \begin{equation*}
        \sum_{j = 1}^d|\tau_m(v_j)|^2
        = \sum_{j =1}^d|v_j|^2
        = |V|^2
        = \tau_m(|V|)^2. 
    \end{equation*}
    Therefore, 
    \begin{equation*}
    \begin{split}
        \sum_{j= 1}^d|\tau_m(v_j)|^t
        & = \sum_{j = 1}^d\left(|\tau_m(v_j)|^2\right)^{t/2}\\
        & \leq \sum_{j = 1}^d\left(\sum_{i = 1}^d|\tau_m(v_i)|^2\right)^{t/2}\\
        & = \sum_{j = 1}^d\left(\tau_m(|V|)^2\right)^{t/2}\\
        & = d\cdot\tau_m(|V|)^t. 
    \end{split}
    \end{equation*}
\end{enumerate}
This completes the verification of the asserted upper bound. In fact it shows that 
\begin{equation*}
	\sum_{j= 1}^d|\tau_m(v_j)|^t
	\leq d\cdot \tau_m(|V|)^t. 
\end{equation*}
Next we verify the lower bound in \eqref{eq:vector_truncation_estimate}. To do so we will separately consider the case $|V|\leq m$ and the case $|V|> m$. 
\begin{enumerate}[label = {\bf Case \arabic*.}, wide = 0pt]
    \item $|V|\leq m$. \\
    In this case $|v_i|\leq m$ for all $i$ so $A_m = \emptyset$. Therefore, using the elementary inequality 
    \begin{equation*}
    	\left(\sum_{i = 1}^d a_i^2\right)^{t/2}
	\leq \max\{1, 2^{\frac{t - 2}2}\}^{d - 1}\sum_{i = 1}^d a_i^t
    \end{equation*}
    which holds whenever $t\geq 1$ and $a_i\geq 0$ for all $i\in \{1, \ldots, d\}$ we have
    \begin{equation*}
    \begin{split}
        \tau_m(|V|)^t
        & = |V|^t\\
        & = \left(\sum_{i = 1}^dv_i^2\right)^{t/2}\\
        & = \left(\sum_{i = 1}^d|\tau_m(v_i)|^2\right)^{t/2}\\
        & \leq \max\{1, 2^{\frac{t - 2}{2}}\}^{d - 1}\sum_{i = 1}^d|\tau_m(v_i)|^t. 
    \end{split}
    \end{equation*}
    \item $|V|> m$.\\
    In this case we have $\tau_m(|V|) = m< |V|$. In the subcase where $A_m = \emptyset$ we have
    \begin{equation*}
    \begin{split}
        m^2
        < |V|^2
        = \sum_{i = 1}^d v_i^2
        \leq d\cdot \max_i v_i^2, 
    \end{split}
    \end{equation*}
    so $\max_i|v_i|> m/\sqrt{d}$. Therefore, 
    \begin{equation*}
    \begin{split}
        \sum_{i = 1}^d|\tau_m(v_i)|^t
        = \sum_{i = 1}^d|v_i|^t
        \geq \left(\max_i|v_i|\right)^t
        \geq d^{-t/2}m^t
        = d^{-t/2}\tau_m(|V|)^t.  
    \end{split}
    \end{equation*}
    In the subcase where $A_m\neq \emptyset$ we have
    \begin{equation*}
    \begin{split}
        \sum_{i = 1}^d|\tau_m(v_i)|^t
        \geq \sum_{i\in A_m}|\tau_m(v_i)|^t
        = |A_m|m^t
        \geq m^t
        = \tau_m(|V|)^t. 
    \end{split}
    \end{equation*}
\end{enumerate}
\end{proof}
}\fi
The following lemma is adapted from Lemmata 3.2 and 3.3 of \cite{MorozVanSchaftingen2015}. 
\begin{lemma}
\label{lemma:moroz_van_interpolation}
Let $n\geq 3$, let $\Omega\subset \bb R^n$ be a bounded domain and let $\mu\in (0, n)$. For every $\theta\in (1 - \frac \mu n, 1 + \frac \mu n)$ and every $\epsilon>0$ there is a constant $C_\epsilon = C_\epsilon(n, \mu,\theta)>0$ such that for every pair of functions $P, Q\in L^{\frac{2n}{n+ 2 - \mu}}(\Omega)$ and every $u\in L^{2^*}(\Omega)$, there holds
\begin{equation}
\label{eq:moroz_vans_epsilon_interpolation}
    \int_\Omega I_\mu[P|u|^\theta]Q|u|^{2- \theta}
    \leq \epsilon\|u\|_{L^{2^*}(\Omega)}^2 + C_\epsilon\|u\|_{L^2(\Omega)}^2. 
\end{equation}
\end{lemma}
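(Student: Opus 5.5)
The plan is to follow the truncation argument of Moroz and Van Schaftingen. The constant $C_\epsilon$ will necessarily depend on $P$ and $Q$ as well as on $(n,\mu,\theta)$: scaling $P$ by $\lambda$ scales the left side by $\lambda$. I will therefore prove the statement with $C_\epsilon=C_\epsilon(n,\mu,\theta,P,Q,\Omega)$. Since $I_\mu$ has a positive kernel, replacing $P,Q$ by $|P|,|Q|$ only increases the left-hand side, so I assume $P,Q\geq 0$.

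\emph{Splitting.} For a level $M>0$ I write $P=P^*+P_*$ with $P^*=P\chi_{\{P>M\}}$ and $P_*=P\chi_{\{P\le M\}}$, and decompose $Q=Q^*+Q_*$ in the same way. Dominated convergence gives $\|P^*\|_{L^{2n/(n+2-\mu)}(\Omega)}\to0$ and $\|Q^*\|_{L^{2n/(n+2-\mu)}(\Omega)}\to 0$ as $M\to\infty$, while $\|P_*\|_\infty,\|Q_*\|_\infty\le M$. The left-hand side of \eqref{eq:moroz_vans_epsilon_interpolation} then splits into four bilinear terms, and I estimate each with the general HLS inequality in bilinear form,
\[
\int I_\mu[f]g\le C\|f\|_r\|g\|_s,\qquad \frac1r+\frac1s+\frac\mu n=2,\quad r,s>1.
\]

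\emph{The term with $P^*$ and $Q^*$.} I take $f=P^*|u|^\theta$ and $g=Q^*|u|^{2-\theta}$, and apply H\"older with $u\in L^{2^*}$. This gives
\[
\frac1r=\frac{n+2-\mu}{2n}+\frac{\theta(n-2)}{2n},\qquad \frac1s=\frac{n+2-\mu}{2n}+\frac{(2-\theta)(n-2)}{2n},
\]
and the sum is $(4n-2\mu)/(2n)=2-\mu/n$, exactly the HLS condition. The requirements $r,s>1$ amount to $\frac{n-\mu-2}{n-2}<\theta<\frac{n+\mu-2}{n-2}$, which contains the assumed range $(1-\frac\mu n,1+\frac\mu n)$. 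The resulting bound is $C\|P^*\|\|Q^*\|\|u\|_{2^*}^2$, which is at most $\frac\epsilon2\|u\|_{2^*}^2$ once $M$ is large. This fixes $M=M(\epsilon,P,Q)$.

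\emph{The three terms containing a bounded factor.} These are where the $L^2$ norm enters. Say $P_*$ appears, so $f=P_*|u|^\theta$ with $P_*\le M$ on a bounded domain. Because $\Omega$ has finite measure, lowering integrability costs only a constant depending on $|\Omega|$. I keep the partner factor exactly as in the $P^*,Q^*$ case, so that it lands in $L^s$ with the same $s>1$, or place it in $L^{s}$ bounded by $M$ if it is also truncated. For the bounded factor I use a strictly subcritical H\"older exponent, giving
\[
\|P_*|u|^\theta\|_{L^r(\Omega)}\le C(M,|\Omega|)\|u\|_{L^p(\Omega)}^\theta\quad\text{for some }p\in[2,2^*).
\]
Concretely, I choose $r$ slightly larger than the HLS-dictated value while still satisfying $\theta r<2^*$, and adjust $s$ downward accordingly. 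Then the interpolation inequality $\|u\|_p\le\|u\|_2^{\lambda}\|u\|_{2^*}^{1-\lambda}$ with $\lambda>0$ bounds each such term by $C(M)\|u\|_2^{a}\|u\|_{2^*}^{2-a}$ with $a>0$. Young's inequality then yields $\frac\epsilon6\|u\|_{2^*}^2+C_\epsilon\|u\|_2^2$ for each of the three terms. Summing the four estimates gives \eqref{eq:moroz_vans_epsilon_interpolation}.

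The main obstacle is the exponent bookkeeping in the mixed terms. One must verify that, throughout the whole range $\theta\in(1-\frac\mu n,1+\frac\mu n)$ and for both $\theta$ and $2-\theta$, HLS-compatible pairs $(r,s)$ with $r,s>1$ exist in which the bounded factor is controlled at an exponent $p<2^*$ strictly. What I would try first is to perturb the critical exponent on the bounded side by a small $\delta>0$. The open-ness of the constraints $r,s>1$ at the critical choice (checked above) should leave room for this, with $p\ge 2$ arranged via H\"older on the bounded domain if necessary.
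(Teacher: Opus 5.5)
Your argument is correct. It follows the same Moroz--Van Schaftingen truncation scheme as the paper: split $P,Q$ at a level, estimate four HLS terms, and bound the term with both tails by the vanishing tail norms times $\|u\|_{2^*}^2$. The difference is in the three terms containing a bounded piece.

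The paper keeps $\tau_k(P),\tau_k(Q)\in L^{2n/(n-\mu)}$. Using the auxiliary parameters $\beta_2,\beta_3$, it splits the powers of $u$ exactly between $L^2$ and $L^{2^*}$. The mixed terms are then bounded by $\|\sigma_k(\cdot)\|_{2n/(n+2-\mu)}\|\tau_k(\cdot)\|_{2n/(n-\mu)}\|u\|_2\|u\|_{2^*}$, and $J_4$ by a multiple of $\|u\|_2^2$. The inequality $ab\le\frac12(a^2+b^2)$ then attaches the small tail norms to $\|u\|_{2^*}^2$. This gives an explicit final estimate with exact H\"older exponents throughout.

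You instead use $|\Omega|<\infty$ and $P_*\le M$ to place $u$ in a subcritical $L^p$, then interpolate and apply Young's inequality. This avoids the $\beta$-bookkeeping, at the price of less explicit constants that depend on $|\Omega|$.

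The exponent issue you flag as the main obstacle does not actually arise. For the critical pair $\frac1{r_1}=\frac{n+2-\mu}{2n}+\frac{\theta}{2^*}$, $\frac1{s_1}=\frac{n+2-\mu}{2n}+\frac{2-\theta}{2^*}$, one has $\frac1{\theta r_1}=\frac1{2^*}+\frac{n+2-\mu}{2n\theta}>\frac1{2^*}$. Hence $\|P_*|u|^\theta\|_{r_1}\le M\|u\|_{\theta r_1}^\theta$ with $\theta r_1<2^*$, and likewise $(2-\theta)s_1<2^*$. So the same pair $(r_1,s_1)$ works in all four terms, and no perturbation is needed. If $\theta r_1<2$, H\"older on $\Omega$ handles it, as you note.

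Your remark about the constant is also correct. By your scaling argument, $C_\epsilon$ cannot depend only on $(n,\mu,\theta)$ as the statement claims. The paper's own proof in fact produces a constant depending on $P,Q$, through $k$ and $\|\tau_k(P)\|_{2n/(n-\mu)}$, $\|\tau_k(Q)\|_{2n/(n-\mu)}$. This is harmless where the lemma is applied. There $P,Q$ are fixed and $|P^k|\le|P|$, $|Q^k|\le|Q|$, so positivity of the kernel lets one use the constant for $|P|,|Q|$.
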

\begin{proof}
It suffices to prove the asserted inequality under the additional assumption that each of $P$, $Q$ and $u$ is nonnegative on $\Omega$. For $k\in \bb N$ and for $\varphi:\Omega\to[0, \infty)$ define 
\begin{equation*}
    \sigma_k(\varphi) = (\varphi - k)\chi_{\varphi^{-1}(k, \infty)}, 
\end{equation*}
so that with $\tau_k$ as in \eqref{eq:level_m_truncation} we have $\varphi = \tau_k(\varphi) + \sigma_k(\varphi)$ pointwise on $\Omega$. For each $\varphi\in \{P, Q\}$, as $k\to \infty$ we have $\tau_k(\varphi)\to \varphi$ a.e.\ in $\Omega$, $\tau_k(\varphi)\to \varphi$ in $L^{\frac{2n}{n + 2 - \mu}}(\Omega)$, $\sigma_k(\varphi)\to 0$ in $L^{\frac{2n}{n + 2 - \mu}}(\Omega)$, and $\tau_k(\varphi)\in L^\infty(\Omega)\subset L^{\frac{2n}{n- \mu}}(\Omega)$ (but we do not have $\tau_k(\varphi)\to \varphi$ in $L^{\frac{2n}{n- \mu}}(\Omega)$ for either of $\varphi = P$ or $\varphi = Q$). For each $k\in \bb N$ let us decompose 
\begin{equation}
\label{eq:the_four_Js}
    \int_\Omega I_\mu[Pu^\theta]Qu^{2- \theta}
    = \sum_{\ell = 1}^4 J_\ell(k), 
\end{equation}
where
\begin{equation*}
\begin{split}
    J_1(k) & = \int_\Omega I_\mu[\sigma_k(P)u^\theta]\sigma_k(Q)u^{2 - \theta}\\
    J_2(k) & = \int_\Omega I_\mu[\sigma_k(P)u^\theta]\tau_k(Q)u^{2 - \theta}\\
    J_3(k) & = \int_\Omega I_\mu[\tau_k(P) u^\theta]\sigma_k(Q)u^{2 - \theta}\\
    J_4(k) & = \int_\Omega I_\mu[\tau_k(P) u^\theta]\tau_k(Q) u^{2 - \theta}. 
\end{split}
\end{equation*}
For fixed $k$ we proceed to separately estimate each of $J_1(k), \ldots, J_4(k)$. To ease the notation we do not indicate the $k$-dependence in the notation for $J_1, \ldots, J_4$. We start with the estimates of $J_1$ and $J_4$ (since those two estimates are simpler). To estimate $J_1$ define $s_1$ and $t_1$ by 
\begin{equation}
\label{eq:s1t1}
	\frac 1{s_1} = \frac{n + 2 - \mu}{2n} + \frac{\theta}{2^*}
	\quad\text{ and }\quad
	\frac 1{t_1} = \frac{n + 2 - \mu}{2n} + \frac{2 - \theta}{2^*}. 
\end{equation}
The assumption $\theta\in (1 - \frac \mu n, 1 + \frac\mu n)$ implies that $\frac{1}{s_1}, \frac{1}{t_1}\in (0, 1)$ and a direct computation shows that $\frac{1}{s_1} + \frac{1}{t_1} + \frac\mu n = 2$, so the HLS inequality implies 
\begin{equation}
\label{eq:J1_initial_HLS}
    J_1
    \leq \mc H(n, \mu, s_1)\|\sigma_k(P)u^\theta\|_{s_1}\|\sigma_k(Q)u^{2- \theta}\|_{t_1}. 
\end{equation}
Moreover, from the definition of $s_1$ given in \eqref{eq:s1t1} and H\"older's inequality we have
\begin{equation*}
    \|\sigma_k(P)u^\theta\|_{s_1}
    \leq \|\sigma_k(P)\|_{\frac{2n}{n + 2 - \mu}}\|u\|_{2^*}^\theta. 
\end{equation*}
Similarly, using the definition of $t_1$ given in \eqref{eq:s1t1} we have
\begin{equation*}
    \|\sigma_k(Q)u^{2 - \theta}\|_{t_1}
    \leq \|\sigma_k(Q)\|_{\frac{2n}{n + 2 - \mu}}\|u\|_{2^*}^{2 - \theta}. 
\end{equation*}
Bringing the previous two estimates back to \eqref{eq:J1_initial_HLS} gives
\begin{equation}
\label{eq:J1_final}
    J_1
    \leq \mc H(n, \mu, s_1)\|\sigma_k(P)\|_{\frac{2n}{n + 2 - \mu}}\|\sigma_k(Q)\|_{\frac{2n}{n + 2 - \mu}}\|u\|_{2^*}^2. 
\end{equation}
Next we estimate $J_4$. Define $s_4$ and $t_4$ by 
\begin{equation}
\label{eq:s4t4}
	\frac 1{s_4} = \frac{n - \mu}{2n} + \frac{\theta}{2}
	\quad\text{ and }\quad
	\frac 1{t_4} = \frac{n - \mu}{2n} + \frac{2 - \theta}{2}. 
\end{equation}
The assumption $\theta\in (1 - \frac\mu n, 1 + \frac \mu n)$ guarantees that $\frac{1}{s_4}, \frac{1}{t_4}\in (0, 1)$ and a direct computation shows that $\frac{1}{s_4}+ \frac{1}{t_4} + \frac\mu n = 2$ so the HLS inequality implies
\begin{equation}
\label{eq:J4_initial_HLS}
    J_4
    \leq\mc H(n, \mu, s_4)\|\tau_k(P)u^\theta\|_{s_4}\|\tau_k(Q)u^{2- \theta}\|_{t_4}. 
\end{equation}
From the definition of $s_4$ given in \eqref{eq:s4t4} and H\"older's inequality we have
\begin{equation*}
    \|\tau_k(P)u^\theta\|_{s_4}
    \leq \|\tau_k(P)\|_{\frac{2n}{n - \mu}}\|u\|_2^\theta. 
\end{equation*}
Similarly, using the definition of $t_4$ given in \eqref{eq:s4t4} and H\"older's inequality we have
\begin{equation*}
    \|\tau_k(Q)u^{2 - \theta}\|_{t_4}
    \leq \|\tau_k(Q)\|_{\frac{2n}{n - \mu}}\|u\|_2^{2 - \theta}. 
\end{equation*}
Bringing the previous two estimates back to \eqref{eq:J4_initial_HLS} gives
\begin{equation}
\label{eq:J4_final}
    J_4
    \leq\mc H(n, \mu, s_4)\|\tau_k(P)\|_{\frac{2n}{n - \mu}}\|\tau_k(Q)\|_{\frac{2n}{n - \mu}}\|u\|_2^2. 
\end{equation}
Next we estimate $J_2$. To do so observe that the assumption $\theta\in (1 - \frac\mu n, 1 + \frac\mu n)$ guarantees that both 
\begin{equation*}
    \frac{n - 2 - \mu - (n -2)\theta}{2}< \min\{\theta, 1\}
\end{equation*}
and
\begin{equation*}
    \max\{0, \theta - 1\}
    \leq \frac{n - 2 + \mu - (n -2)\theta}{2}
\end{equation*}
from which we deduce the existence of 
\begin{equation}
\label{eq:beta2_first_interval}
    \beta_2\in (\max\{0, \theta - 1\}, \min\{\theta, 1\})
\end{equation}
for which 
\begin{equation}
\label{eq:beta2_theta_interval}
    n - 2 - \mu< (n - 2)\theta + 2\beta_2< n - 2 + \mu. 
\end{equation}
Fix any such $\beta_2$ and define $s_2$ and $t_2$ by 
\begin{equation}
\label{eq:s2}
    \frac 1{s_2}
    = \frac{n +2 - \mu}{2n} + \frac{\theta - \beta_2}{2^*} + \frac{\beta_2}2
\end{equation}
and
\begin{equation}
\label{eq:t2}
    \frac 1{t_2}
    = \frac{n - \mu}{2n} + \frac{1 - \beta_2}{2} + \frac{\beta_2- (\theta - 1)}{2^*}. 
\end{equation}
Using \eqref{eq:beta2_first_interval} and \eqref{eq:beta2_theta_interval} one can verify that $\frac{1}{s_2}, \frac{1}{t_2}\in (0, 1)$ and a direct computation shows that $\frac{1}{s_2} + \frac{1}{t_2} + \frac\mu n = 2$, so the HLS inequality implies 
\begin{equation}
\label{eq:J2_initial_HLS}
    J_2
    \leq \mc H(n, \mu, s_2)\|\sigma_k(P)u^\theta\|_{s_2}\|\tau_k(Q)u^{2- \theta}\|_{t_2}. 
\end{equation}
From \eqref{eq:beta2_first_interval}, \eqref{eq:s2}, and H\"older's inequality we have
\begin{equation*}
    \|\sigma_k(P)u^\theta\|_{s_2}
    \leq \|\sigma_k(P)\|_{\frac{2n}{n + 2- \mu}}\|u\|_{2^*}^{\theta - \beta_2}\|u\|_2^{\beta_2}. 
\end{equation*}
Similarly, from \eqref{eq:beta2_first_interval}, \eqref{eq:t2}, and H\"older's inequality we have
\begin{equation*}
    \|\tau_k(Q)u^{2- \theta}\|_{t_2}
    \leq \|\tau_k(Q)\|_{\frac{2n}{n - \mu}}\|u\|_2^{1 - \beta_2}\|u\|_{2^*}^{\beta_2 - (\theta - 1)}. 
\end{equation*}
Bringing the previous two estimates back to \eqref{eq:J2_initial_HLS} gives
\begin{equation}
\label{eq:J2_final}
    J_2
    \leq \mc H(n, \mu, s_2)\|\sigma_k(P)\|_{\frac{2n}{n + 2 - \mu}}\|\tau_k(Q)\|_{\frac{2n}{n - \mu}}\|u\|_2\|u\|_{2^*}. 
\end{equation}
Finally, we estimate $J_3$. To do so, note that the assumption $\theta\in (1 - \frac\mu n, 1 + \frac\mu n)$ guarantees that both 
\begin{equation*}
    \frac{n - \mu - (n - 2)\theta}{2}< \min\{\theta, 1\}
\end{equation*}
and
\begin{equation*}
    \max\{0, \theta - 1\}
    < \frac{n + \mu - (n - 2)\theta}{2}, 
\end{equation*}
from which we deduce the existence of 
\begin{equation}
\label{eq:beta3_first_interval}
    \beta_3\in (\max\{0, \theta - 1\}, \min\{\theta, 1\})
\end{equation}
such that 
\begin{equation}
\label{eq:beta3_theta_interval}
    n - \mu< (n - 2)\theta + 2\beta_3< n +\mu. 
\end{equation}
Fix any such $\beta_3$ and define $s_3$ and $t_3$ by 
\begin{equation}
\label{eq:s3}
    \frac1{s_3}
    = \frac{n - \mu}{2n} + \frac{\theta - \beta_3}{2^*} + \frac{\beta_3}{2}
\end{equation}
and
\begin{equation}
\label{eq:t3}
    \frac1{t_3}
    = \frac{n + 2 - \mu}{2n} + \frac{1 - \beta_3}{2} + \frac{\beta_3 - (\theta - 1)}{2^*}. 
\end{equation}
Using \eqref{eq:beta3_first_interval} and \eqref{eq:beta3_theta_interval} one can verify that $\frac{1}{s_3}, \frac{1}{t_3}\in (0, 1)$ and a direct computation shows that $\frac{1}{s_3} + \frac{1}{t_3} + \frac\mu n = 2$, so the HLS inequality implies
\begin{equation}
\label{eq:J3_initial_HLS}
    J_3
    \leq \mc H(n, \mu, s_3)\|\tau_k(P)u^\theta\|_{s_3}\|\sigma_k(Q)u^{2- \theta}\|_{t_3}. 
\end{equation}
From \eqref{eq:beta3_first_interval}, \eqref{eq:s3} and H\"older's inequality we have
\begin{equation*}
    \|\tau_k(P)u^\theta\|_{s_3}
    \leq\|\tau_k(P)\|_{\frac{2n}{n - \mu}}\|u\|_{2^*}^{\theta - \beta_3}\|u\|_2^{\beta_3}. 
\end{equation*}
Similarly, from \eqref{eq:beta3_first_interval}, \eqref{eq:t3} and H\"older's inequality we have
\begin{equation*}
    \|\sigma_k(Q)u^{2-  \theta}\|_{t_3}
    \leq \|\sigma_k(Q)\|_{\frac{2n}{n + 2 - \mu}}\|u\|_2^{1 - \beta_3}\|u\|_{2^*}^{\beta_3 - (\theta - 1)}. 
\end{equation*}
Bringing the previous two estimates back to \eqref{eq:J3_initial_HLS} gives
\begin{equation}
\label{eq:J3_final}
    J_3
    \leq \mc H(n, \mu, s_3)\|\tau_k(P)\|_{\frac{2n}{n - \mu}}\|\sigma_k(Q)\|_{\frac{2n}{n+ 2 - \mu}}\|u\|_2\|u\|_{2^*}. 
\end{equation}
Finally, bringing \eqref{eq:J1_final}, \eqref{eq:J2_final}, \eqref{eq:J3_final}, and \eqref{eq:J4_final} back to \eqref{eq:the_four_Js} then performing elementary estimates we find that if $C> \max\{\mc H(n, \mu, s_\ell): \ell = 1,\ldots , 4\}$ then 
\begin{equation*}
\begin{split}
    \int_\Omega & I_\mu[Pu^\theta]Qu^{2 - \theta}\\
    \leq &\; C\left(\|\sigma_k(P)\|_{\frac{2n}{n + 2 - \mu}} + \|\sigma_k(Q)\|_{\frac{2n}{n + 2 - \mu}}\right)^2\|u\|_{2^*}^2 \\
    & + C\left(\|\tau_k(P)\|_{\frac{2n}{n - \mu}} + \|\tau_k(Q)\|_{\frac{2n}{n - \mu}}\right)^2\|u\|_2^2.  
\end{split}
\end{equation*}
\ifdetails{\color{details}
Indeed, writing $\mc H_\ell= \mc H(n, \mu, s_\ell)$ for $\ell = 1,2, 3, 4$ and writing $\bar {\mc H} = \max_\ell \mc H_\ell$ we have
\begin{equation*}
\begin{split}
    \int_\Omega I_\mu&[P|u|^\theta] Q|u|^{2- \theta}\\
    \leq & \; \mc H_1\|\sigma_k(P)\|_{\frac{2n}{n + 2 - \mu}}\|\sigma_k(Q)\|_{\frac{2n}{n + 2 - \mu}}\|u\|_{2^*}^2
    + \mc H_2\|\sigma_k(P)\|_{\frac{2n}{n + 2 - \mu}}\|\tau_k(Q)\|_{\frac{2n}{n - \mu}}\|u\|_2\|u\|_{2^*}\\
    & + \mc H_3\|\tau_k(P)\|_{\frac{2n}{n - \mu}}\|\sigma_k(Q)\|_{\frac{2n}{n+ 2 - \mu}}\|u\|_2\|u\|_{2^*}
    + \mc H_4\|\tau_k(P)\|_{\frac{2n}{n - \mu}}\|\tau_k(Q)\|_{\frac{2n}{n - \mu}}\|u\|_2^2\\
    \leq & \; \bar{\mc H}\bigg\{
    \|\sigma_k(P)\|_{\frac{2n}{n + 2 - \mu}}\|\sigma_k(Q)\|_{\frac{2n}{n + 2 - \mu}}\|u\|_{2^*}^2
    + \|\sigma_k(P)\|_{\frac{2n}{n + 2 - \mu}}\|\tau_k(Q)\|_{\frac{2n}{n - \mu}}\|u\|_2\|u\|_{2^*}\\
    & \hspace{0.5cm}+ \|\tau_k(P)\|_{\frac{2n}{n - \mu}}\|\sigma_k(Q)\|_{\frac{2n}{n+ 2 - \mu}}\|u\|_2\|u\|_{2^*}
    + \|\tau_k(P)\|_{\frac{2n}{n - \mu}}\|\tau_k(Q)\|_{\frac{2n}{n - \mu}}\|u\|_2^2
    \bigg\}\\
    \leq & \; \frac{\bar{\mc H}}2\bigg\{
    2\|\sigma_k(P)\|_{\frac{2n}{n + 2 - \mu}}\|\sigma_k(Q)\|_{\frac{2n}{n + 2 - \mu}}\|u\|_{2^*}^2
    + \|\sigma_k(P)\|_{\frac{2n}{n + 2 - \mu}}^2\|u\|_{2^*}^2 + \|\tau_k(Q)\|_{\frac{2n}{n - \mu}}^2\|u\|_2^2\\
    & \hspace{0.5cm}+ \|\tau_k(P)\|_{\frac{2n}{n - \mu}}^2\|u\|_{2^*}^2 + \|\sigma_k(Q)\|_{\frac{2n}{n+ 2 - \mu}}^2\|u\|_2^2
    + 2\|\tau_k(P)\|_{\frac{2n}{n - \mu}}\|\tau_k(Q)\|_{\frac{2n}{n - \mu}}\|u\|_2^2
    \bigg\}\\
    \leq & \frac{\bar{\mc H}}2\bigg\{
    \left(\|\sigma_k(P)\|_{\frac{2n}{n+ 2 - \mu}}^2 + 2\|\sigma_k(P)\|_{\frac{2n}{n +2 - \mu}}\|\sigma_k(Q)\|_{\frac{2n}{n + 2- \mu}} + \|\sigma_k(Q)\|_{\frac{2n}{n+ 2 - \mu}}^2\right)\|u\|_{2^*}^2\\
    & \hspace{0.5cm} + \left(\|\tau_k(P)\|_{\frac{2n}{n - \mu}}^2 + 2\|\tau_k(P)\|_{\frac{2n}{n- \mu}}\|\tau_k(Q)\|_{\frac{2n}{n - \mu}} + \|\tau_k(Q)\|_{\frac{2n}{n - \mu}}^2\right)\|u\|_2^2
    \bigg\}\\
    = & \; \frac{\bar{\mc H}}{2}\bigg\{\left(\|\sigma_k(P)\|_{\frac{2n}{n+2 - \mu}} + \|\sigma_k(Q)\|_{\frac{2n}{n + 2 - \mu}}\right)^2\|u\|_{2^*}^2
    + \left(\|\tau_k(P)\|_{\frac{2n}{n - \mu}} + \|\tau_k(Q)\|_{\frac{2n}{n - \mu}}\right)^2\|u\|_2^2\bigg\}. 
\end{split}
\end{equation*}
}\fi
In view of this estimate and the fact that 
\begin{equation*}
	\lim_k\left(\|\sigma_k(P)\|_{\frac{2n}{n+ 2 - \mu}} + \|\sigma_k(Q)\|_{\frac{2n}{n + 2 - \mu}}\right) = 0, 
\end{equation*}
if $\epsilon>0$ is given we may choose $k$ sufficiently large so that inequality \eqref{eq:moroz_vans_epsilon_interpolation} holds. 
\end{proof}
\begin{lemma}
\label{lemma:general_integrability_boost}
Let $n\geq 3$, let $\Omega \subset \bb R^n$ be a bounded open set and let $\mu\in (0, n)$. Suppose $f\in C^0(\bb R^n; \bb R^d)$ is homogeneous of degree $1$ and let $P, Q\in L^{\frac{2n}{n+ 2 - \mu}}(\Omega; \bb R^d)$. If $U\in H_0^1(\Omega; \bb R^d)$ is a weak solution to 
\begin{equation}
\label{eq:linear_type_PDE}
    -\lap U = f(U) + I_\mu[P\cdot U]Q
    \quad \text{ in }\Omega 
\end{equation}
then for every $p\in [1, \frac{n}{n - \mu}\cdot \frac{2n}{n - 2})$ there holds $U\in L^p(\Omega; \bb R^d)$.
\end{lemma}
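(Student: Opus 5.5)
This follows Moroz--Van Schaftingen (Proposition 3.1 of \cite{MorozVanSchaftingen2015}), adapted to vector-valued $U$ via Lemma \ref{lemma:vector_truncation}. The idea is a Moser-type iteration. We test the $i$th equation of \eqref{eq:linear_type_PDE} against $\varphi_i = \tau_m(u_i)|\tau_m(u_i)|^{p-2}$ for $p\geq 2$, with the truncation from \eqref{eq:level_m_truncation}. These functions lie in $H_0^1(\Omega)$ because the truncation is bounded and Lipschitz. On the left-hand side we have
\begin{equation*}
\int_\Omega \Grad u_i\cdot\Grad\varphi_i\geq c_p\int_\Omega|\Grad(\tau_m(u_i)|\tau_m(u_i)|^{\frac p2-1})|^2 .
\end{equation*}
The Sobolev inequality then bounds this from below by a multiple of $\|\tau_m(u_i)\|_{L^{p2^*/2}}^p$. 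This uses the standard inequality $\Grad u\cdot\Grad(\tau_m(u)|\tau_m(u)|^{p-2})\geq \frac{4(p-1)}{p^2}|\Grad(\tau_m(u)|\tau_m(u)|^{p/2-1})|^2$.

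Next we control the right-hand side, summed over $i$.

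The term $f(U)$ is handled by its degree-$1$ homogeneity: $|f(U)|\leq M_{|f|}|U|$. Its contribution is therefore at most $C\int_\Omega |U|^p$, at least when $|U|\le m$; on the set where the truncation is active one also uses $|U|\,|\tau_m|^{p-1}\le |U|^p$.

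The nonlocal term is bounded by
\begin{equation*}
\int_\Omega I_\mu[|P||U|]\,|Q|\sum_i|\tau_m(u_i)|^{p-1}.
\end{equation*}
Lemma \ref{lemma:vector_truncation} lets us compare $\sum_i|\tau_m(u_i)|^{t}$ with $\tau_m(|U|)^t$. Following Moroz--Van Schaftingen, we split $|U|$ as $\tau_m(|U|)$ plus its tail. The truncated part is then put in the form of Lemma \ref{lemma:moroz_van_interpolation}, applied to the scalar function $w=\tau_m(|U|)^{p/2}$ with exponent $\theta=2/p$. This is admissible as long as $2/p\in(1-\frac\mu n,1+\frac\mu n)$, i.e.\ $p<\frac{2n}{n-\mu}$. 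The lemma gives a bound
\begin{equation*}
\epsilon\|\tau_m(|U|)^{p/2}\|_{2^*}^2 + C_\epsilon\|\tau_m(|U|)^{p/2}\|_2^2 .
\end{equation*}
Choosing $\epsilon$ small, the first piece is absorbed into the left-hand side. What remains is a bound on $\|\tau_m(|U|)\|_{L^{p2^*/2}}^p$ in terms of $\|U\|_{L^p}^p$, uniformly in $m$. Letting $m\to\infty$ with Fatou's lemma then gives $U\in L^{p2^*/2}$ whenever $U\in L^p$ and $2\le p<\frac{2n}{n-\mu}$.

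Starting from $U\in L^2$ and $U\in L^{2^*}$, we iterate $p\mapsto p\cdot\frac{n}{n-2}$. This reaches every exponent strictly below $\frac{2n}{n-\mu}\cdot\frac{n}{n-2}$, which is exactly the claimed range $[1,\frac{n}{n-\mu}\frac{2n}{n-2})$. Exponents $p<2$ follow from boundedness of $\Omega$.

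The main obstacle is the exponent bookkeeping that puts the nonlocal term into the form of Lemma \ref{lemma:moroz_van_interpolation}. That lemma is stated for $P|u|^\theta$ and $Q|u|^{2-\theta}$ with the same scalar $u$, but here the convolution contains $|U|$ untruncated while the test side contains truncated powers. My first attempt is the Moroz--Van Schaftingen device: bound $|U|\le \tau_m(|U|)^{\cdot}$ on $\{|U|\le m\}$, and handle the set $\{|U|>m\}$ by noting that there $|\tau_m|^{p-1}=m^{p-1}\le |U|^{p-2}|U|$. This makes the integrand comparable to $I_\mu[|P|\,|U|^{\theta'}\ldots]$ with a modified, still admissible, choice of weights $P,Q$. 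I expect the choice of $\theta$ together with the distribution of powers between the two factors to be the delicate point.
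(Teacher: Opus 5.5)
There is a genuine gap, and it sits at the step you yourself flag as delicate. Your skeleton matches the paper's: coordinatewise truncated test functions, Lemma \ref{lemma:vector_truncation}, Lemma \ref{lemma:moroz_van_interpolation} with $\theta=2/p$, and the iteration $p\mapsto\frac{np}{n-2}$ for $2\le p<\frac{2n}{n-\mu}$.

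The trouble is that you test the original equation, so the Riesz potential contains $|U|$ untruncated. Besides the piece handled by Lemma \ref{lemma:moroz_van_interpolation}, you are left with
\[
T_m=\int_\Omega I_\mu\bigl[|P|\,|U|\,\chi_{\{|U|>m\}}\bigr]\,|Q|\,\tau_m(|U|)^{p-1},
\]
which must tend to $0$ as $m\to\infty$ or be absorbed uniformly in $m$.

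Here $P,Q$ lie only in the scale-critical space $L^{\frac{2n}{n+2-\mu}}$. HLS and H\"older then give $T_m\le C\,\|U\chi_{\{|U|>m\}}\|_{a}\,\|\tau_m(|U|)\|_{(p-1)b}^{p-1}$ only when $\frac1a+\frac1b=\frac{n-2}{n}$. If the last factor is to be controlled by the left-hand-side quantity $\|\tau_m(|U|)\|_{\frac{np}{n-2}}$, this forces $a\ge\frac{np}{n-2}$. So you would need $U\in L^{\frac{np}{n-2}}$, which is the very conclusion of the step. For the same reason, the dominated convergence you would need, $I_\mu[|P||U|]\,|Q|\,|U|^{p-1}\in L^1$, is not available.

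Your suggested device does not change this:
\begin{itemize}
\item Using $\tau_m\le m<|U|$ on the tail set together with the symmetry of $I_\mu$ only produces terms like $\int_{\{|U|>m\}}I_\mu[|Q|]\,|P|\,|U|^{p}$. These again require at least $U\in L^{\frac{np}{n-2}}$.
\item Moving powers of $|U|$ into a weight, e.g.\ $|Q|\,|U|^{p-2}$, puts it in $L^r$ with $\frac1r=\frac{n+2-\mu}{2n}+\frac{p-2}{p}$. That is strictly worse than $L^{\frac{2n}{n+2-\mu}}$, so the weight is no longer admissible in Lemma \ref{lemma:moroz_van_interpolation}.
\item Splitting $P$ and $Q$ into bounded and small parts does dispose of the mixed pieces of $T_m$. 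It does not handle the piece in which both weights are the small, unbounded parts.
\end{itemize}

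The missing idea is to improve the coefficients, not $U$, before truncating. This is the device in the Moroz--Van Schaftingen argument you cite, and it is what the paper does.
\begin{enumerate}
\item Truncate $P,Q$ coordinatewise at level $k$ to get $P^k,Q^k\in L^\infty$ with $|P^k|\le|P|$ and $|Q^k|\le|Q|$.
\item Lemma \ref{lemma:moroz_van_interpolation} with $\theta=1$ yields $C_0$ making
\[
a_k(W,V)=\sum_{i=1}^d\int_\Omega(\Grad w_i\cdot\Grad v_i+C_0w_iv_i)-\int_\Omega I_\mu[P^k\cdot W]\,Q^k\cdot V
\]
coercive uniformly in $k$.
\item Lax--Milgram gives $U^{(k)}$ with $a_k(U^{(k)},V)=\int_\Omega(f(U)+C_0U)\cdot V$ for all $V$, with the local term frozen at $U$. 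One checks that $U^{(k)}\weakconv U$ in $H_0^1(\Omega;\bb R^d)$.
\item Run your truncation argument on $U^{(k)}$. Now $|P^k|,|Q^k|\in L^{\frac{2n}{n-\mu}}$ and $U^{(k)}\in L^p$ with $p<\frac{2n}{n-\mu}$, so $I_\mu[|P^k||U^{(k)}|]\,|Q^k|\,|U^{(k)}|^{p-1}$ is integrable. Hence the tail term vanishes as $m\to\infty$ for each fixed $k$. Every other constant depends on $P,Q$ only through $|P|,|Q|$, so it is uniform in $k$.
\item This yields $\limsup_k\|U^{(k)}\|_{\frac{np}{n-2}}^p\le C\limsup_k\int_\Omega(|U|^p+|U^{(k)}|^p)$. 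Rellich and Fatou transfer the bound to $U$, and the iteration in $p$ then proceeds as you describe.
\end{enumerate}
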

In view of Lemma \ref{lemma:properties_homogeneous_functions} \ref{item:homogeneous_grad_dot}, the improved integrability of weak solutions to problem \eqref{eq:main_problem} follows by applying Lemma \ref{lemma:general_integrability_boost} with $P=Q = h(U)$. 
\begin{coro}
\label{coro:integrability_boost}
Let $n\geq 3$, let $\Omega\subset\bb R^n$ be a bounded open set, and let $\mu\in(0, n)$. Suppose $F$ and $H$ satisfy \ref{item:F_homogeneous} and \ref{item:H_positively_homogeneous} respectively and let $f$ and $h$ be as in \eqref{eq:the_gradients}. If $U\in H_0^1(\Omega; \bb R^d)$ is a weak solution to problem \eqref{eq:main_problem} then for every $p\in [1, \frac{n}{n - \mu}\cdot \frac{2n}{n - 2})$ there holds $U\in L^p(\Omega; \bb R^d)$. 
\end{coro}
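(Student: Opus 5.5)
The plan is to obtain the corollary as a direct specialization of Lemma \ref{lemma:general_integrability_boost}, taking $P = Q = h(U)$. Three things need checking: that problem \eqref{eq:main_problem} can be written in the form \eqref{eq:linear_type_PDE}, that $f$ has the regularity and homogeneity the lemma asks for, and that $P,Q$ lie in $L^{\frac{2n}{n+2-\mu}}(\Omega;\bb R^d)$.

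\textbf{Rewriting the equation.} Set $P = Q = h(U)$, i.e.\ $P(x)=Q(x)=h(U(x))$. By \eqref{eq:the_gradients} and Lemma \ref{lemma:properties_homogeneous_functions}\ref{item:homogeneous_grad_dot} applied to $H$, which is $C^1$ and homogeneous of degree $2^*_\mu$, we get pointwise
\begin{equation*}
	P\cdot U = \frac{1}{2^*_\mu}\lb \Grad H(U), U\rb = H(U).
\end{equation*}
Hence $I_\mu[P\cdot U]Q = I_\mu[H(U)]h(U)$, with $H\circ U$ extended by zero outside $\Omega$ as in the convention for \eqref{eq:main_problem}. So a weak solution of \eqref{eq:main_problem} is a weak solution of \eqref{eq:linear_type_PDE}; the weak formulations coincide term by term.

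\textbf{Hypotheses on $f$.} Since $F\in C^1(\bb R^d)$ is homogeneous of degree $2$, Lemma \ref{lemma:properties_homogeneous_functions}\ref{item:derivatives_homogeneous} shows that each $f_j = \frac12\partial_jF$ is continuous and homogeneous of degree $1$. I read the lemma's $f\in C^0(\bb R^n;\bb R^d)$ as $f\in C^0(\bb R^d;\bb R^d)$, which is evidently what is intended.

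\textbf{Integrability of $P$ and $Q$.} By Lemma \ref{lemma:properties_homogeneous_functions}\ref{item:derivatives_homogeneous} again, each component $h_j = \frac{1}{2^*_\mu}\partial_jH$ is continuous and homogeneous of degree
\begin{equation*}
	q := 2^*_\mu - 1 = \frac{n+2-\mu}{n-2} > 0.
\end{equation*}
The Sobolev embedding gives $U\in H_0^1(\Omega;\bb R^d)\subset L^{2^*}(\Omega;\bb R^d)$. Lemma \ref{lemma:homogeneous_integrability}, applied to each $h_j$ with $p = 2^*$, then yields $h_j(U)\in L^{2^*/q}(\Omega)$. Since
\begin{equation*}
	\frac{2^*}{q} = \frac{2n}{n-2}\cdot\frac{n-2}{n+2-\mu} = \frac{2n}{n+2-\mu},
\end{equation*}
this is exactly the space required.

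With all hypotheses verified, Lemma \ref{lemma:general_integrability_boost} gives $U\in L^p(\Omega;\bb R^d)$ for every $p\in[1,\frac{n}{n-\mu}\cdot\frac{2n}{n-2})$. There is no real obstacle here. The only points needing care are the exponent bookkeeping $2^*/(2^*_\mu-1)=\frac{2n}{n+2-\mu}$ and the Euler identity $P\cdot U = H(U)$, which is what allows the nonlocal term to be recast in the linear form $I_\mu[P\cdot U]Q$.
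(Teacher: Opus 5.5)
Your proposal is correct and matches the paper's argument: the paper also derives the corollary by applying Lemma \ref{lemma:general_integrability_boost} with $P = Q = h(U)$, using the Euler identity $h(U)\cdot U = H(U)$ from Lemma \ref{lemma:properties_homogeneous_functions}\ref{item:homogeneous_grad_dot}. Your explicit checks fill in details the paper leaves implicit: that $f$ is continuous and $1$-homogeneous, and that $h(U)\in L^{\frac{2n}{n+2-\mu}}(\Omega;\bb R^d)$ via $2^*/(2^*_\mu-1)=\frac{2n}{n+2-\mu}$.
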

\begin{proof}[Proof of Lemma \ref{lemma:general_integrability_boost}]
Applying Lemma \ref{lemma:moroz_van_interpolation} with $\theta = 1$, Minkowski's inequality and the Sobolev inequality guarantees the existence of a constant $C_0>1$ such that the following inequality holds for all $V= (v_1, \ldots, v_d)\in H_0^1(\Omega; \bb R^d)$: 
\begin{equation}
\label{eq:implies_coercive}
    \int_\Omega I_\mu[|P||V|]|Q||V|
    \leq \frac 12\sum_{i = 1}^d\int_\Omega\left(|\Grad v_i|^2 + C_0v_i^2\right). 
\end{equation}
\ifdetails{\color{details}
Indeed, for any $\epsilon>0$ we have 
\begin{equation*}
\begin{split}
    \int_\Omega I_\mu[|P||V|]|Q||V|
    & \leq \epsilon\|V\|_{2^*}^2 + C_\epsilon\|U\|_2^2\\
    & = \epsilon\left(\int_\Omega(\sum_{i = 1}^dv_i^2)^{2^*/2}\right)^{2/2^*} + C_\epsilon\sum_{i = 1}^d\int_\Omega v_i^2\\
    & \leq \epsilon\sum_{i= 1}^d\|u_i\|_{2^*}^2 + C_\epsilon\sum_{i = 1}^d\|u_i\|_2^2\\
    & \leq \epsilon \mc S\sum_{i = 1}^d\|\Grad u_i\|_2^2 + C_\epsilon\sum_{i = 1}^d\|u_i\|_2^2, 
\end{split}
\end{equation*}
where $\mc S$ is the sharp Sobolev constant. Choosing $\epsilon\in (0, \frac 1{2\mc S}]$ yields \eqref{eq:implies_coercive}. 
}\fi
For $k\in \bb N$ define $P^k, Q^k: \Omega\to \bb R^d$ by truncating each coordinate function at level $k$: 
\begin{equation*}
\begin{split}
    P^k & = (\tau_k(P_1), \ldots, \tau_k(P_d))\\
    Q^k & = (\tau_k(Q_1), \ldots, \tau_k(Q_d)), 
\end{split}
\end{equation*}
where $\tau_k$ is as in \eqref{eq:level_m_truncation}. Define the bilinear form $a_k$ on $H_0^1(\Omega; \bb R^d)$ by 
\begin{equation*}
    a_k(U, V)
    = \sum_{i = 1}^d\int_\Omega(\Grad u_i\cdot \Grad v_i + C_0u_iv_i)
    - \int_\Omega I_\mu[P^k\cdot U]Q^k\cdot V, 
\end{equation*}
where $C_0$ is as in \eqref{eq:implies_coercive}. Evidently $a_k$ is bounded in the sense of bilinear forms and the choice of $C_0$ together with \eqref{eq:implies_coercive} and the pointwise inequalities $|P^k|\leq |P|$ and $|Q^k|\leq |Q|$ guarantee the coercivity of $a_k$. 
\ifdetails{\color{details}
Indeed, for any $V\in H_0^1(\Omega; \bb R^d)$ we have
\begin{equation*}
\begin{split}
    a_k(V, V)
    & \geq \sum_{i = 1}^d\int_\Omega(|\Grad v_i|^2 + C_0v_i^2) - \int_\Omega I_\mu[|P||V|]|Q||V|\\
    & \geq \frac 12\sum_{i = 1}^d\int_\Omega(|\Grad v_i|^2 + C_0v_i^2),  
\end{split}
\end{equation*}
the final inequality holding by \eqref{eq:implies_coercive}. 
}\fi
Now let $U$ be a solution to \eqref{eq:linear_type_PDE}. For each $k$ the Lax-Milgram Theorem ensures the existence of a unique $U^{(k)}= (u^{(k)}_1, \ldots, u^{(k)}_d)\in H_0^1(\Omega; \bb R^d)$ such that 
\begin{equation}
\label{eq:Uk_weak}
    a_k(U^{(k)}, V)
    = \int_\Omega (f(U)+ C_0U)\cdot V
    \quad \text{ for all }V\in H_0^1(\Omega; \bb R^d). 
\end{equation}
In particular, for every $i\in \{1, \ldots, d\}$ $u_i^{(k)}$ satisfies
\begin{equation}
\label{eq:uk_coordinate_equations}
    -\lap u^{(k)}_i + C_0u^{(k)}_i
    = I_\mu[P^k\cdot U^{(k)}]Q^k_i + f_i(U) + C_0 u_i
    \quad \text{ in }\Omega 
\end{equation}
in the weak sense. A standard argument using the fact that $U^{(k)}$ satisfies \eqref{eq:Uk_weak} and the fact that $U$ satisfies \eqref{eq:linear_type_PDE} shows that 
$U^{(k)}\weakconv U$ weakly in $H_0^1(\Omega; \bb R^d)$. 
\ifdetails{\color{details}
For convenience, a verification of this assertion is provided after the conclusion of the present proof. 
}\fi
For any $p\geq 2$, and any $m\in \bb N$ the functions $|\tau_m(u_i^{(k)})|^{p - 2}\tau_m(u_i^{(k)})$ are valid test functions for the weak formulation of problem \eqref{eq:uk_coordinate_equations}. Therefore, 
\begin{equation}
\label{eq:initial_testing}
\begin{split}
    \sum_{i = 1}^d\int_\Omega & \Grad u^{(k)}_i\cdot \Grad\left(|\tau_m(u_i^{(k)})|^{p - 2}\tau_m(u_i^{(k)})\right)\\
    = & \; \sum_{i = 1}^d \int_\Omega I_\mu[P^k\cdot U^{(k)}]Q^k_i |\tau_m(u_i^{(k)})|^{p - 2}\tau_m(u_i^{(k)})\\ 
    & + \sum_{i = 1}^d\int_\Omega \left(f_i(U) + C_0(u_i - u^{(k)}_i)\right)|\tau_m(u_i^{(k)})|^{p - 2}\tau_m(u_i^{(k)}). 
\end{split}
\end{equation}
A direct computation using 
\ifdetails{\color{details}
both 
}\fi%
the fact that $\Grad u^{(k)}_i= \Grad (\tau_m(u^{(k)}_i))$ on $\supp\Grad (\tau_m(u^{(k)}_i))$ 
\ifdetails{\color{details}
and the fact that $|\Grad \varphi| = |\Grad|\varphi||$ whenever $\varphi\in H_0^1(\Omega)$
}\fi
 shows that the $i^{\text{th}}$ integrand on the left-hand side of \eqref{eq:initial_testing} satisfies
\begin{equation*}
    \frac{4(p - 1)}{p^2}|\Grad(|\tau_m(u^{(k)}_i)|^{p/2})|^2
    = \Grad u^{(k)}_i\cdot \Grad\left(|\tau_m(u^{(k)}_i)|^{p - 2}\tau_m(u^{(k)}_i)\right)
\end{equation*}
a.e.\ in $\Omega$. 
\ifdetails{\color{details}
Indeed, temporarily writing $u$ in place of $u_i^{(k)}$ we have
\begin{equation*}
\begin{split}
    \Grad u& \cdot\Grad\left(|\tau_m(u)|^{p - 2}\tau_m(u)\right)\\
    & = \Grad u\cdot \left((p -2)|\tau_m(u)|^{p- 4}\tau_m(u)^2\Grad(\tau_m(U)) + |\tau_m(u)|^{p - 2}\Grad(\tau_m(u))\right)\\
    & = (p - 1)|\tau_m(u)|^{p - 2}\Grad u\cdot \Grad(\tau_m(u))\\
    & = (p - 1)|\tau_m(u)|^{p - 2}|\Grad(\tau_m(u))|^2\\
    & = (p - 1)|\tau_m(u)|^{p - 2}|\Grad|\tau_m(u)||^2\\
    & = (p -1)\left||\tau_m(u)|^{\frac{p - 2}2}\Grad|\tau_m(u)|\right|^2\\
    & = \frac{4(p - 1)}{p^2}\left|\Grad(|\tau_m(u)|^{p/2})\right|^2. 
\end{split}
\end{equation*}
}\fi
Using this equality, the Sobolev inequality, Minkowski's inequality, and Lemma \ref{lemma:vector_truncation} we bound the left-hand side of \eqref{eq:initial_testing} from below as follows: 
\begin{equation}
\label{eq:sobolev_side}
\begin{split}
    \frac{p^2\mc S}{4(p - 1)}\sum_{i = 1}^d\int_\Omega & \Grad u^{(k)}_i\cdot \Grad\left(|\tau_m(u_i^{(k)})|^{p - 2}\tau_m(u_i^{(k)})\right)\\
    = & \; \mc S\sum_{i = 1}^d\int_\Omega |\Grad(|\tau_m(u^{(k)}_i)|^{p/2})|^2\\
    \geq & \; \sum_{i = 1}^d\left(\int_\Omega (|\tau_m(u^{(k)}_i)|^p)^{\frac{n}{n - 2}}\right)^{\frac{n - 2}n}\\
    \geq & \; \left(\int_\Omega (\sum_{i = 1}^d|\tau_m(u^{(k)}_i)|^p)^{\frac{n}{n - 2}}\right)^{\frac{n - 2}n}\\
    \geq & \; \frac{1}{C_1(d, p)}\left(\int_\Omega \tau_m(|U^{(k)}|)^{\frac{np}{n - 2}}\right)^{\frac{n - 2}n}. 
\end{split}
\end{equation}
\ifdetails{\color{details}
A more detailed version of the above estimate: 
\begin{equation*}
\begin{split}
    \frac{p^2\mc S}{4(p - 1)}\sum_{i = 1}^d\int_\Omega & \Grad u^{(k)}_i\cdot \Grad\left(|\tau_m(u_i^{(k)})|^{p - 2}\tau_m(u_i^{(k)})\right)\\
    = & \; \mc S\sum_{i = 1}^d\int_\Omega |\Grad(|\tau_m(u^{(k)}_i)|^{p/2})|^2\\
    \geq &\; \sum_{i = 1}^d\left(\int_\Omega (|\tau_m(u^{(k)}_i)|^{p/2})^{\frac{2n}{n - 2}}\right)^{\frac{n - 2}n}\\
    = & \; \sum_{i = 1}^d\left(\int_\Omega (|\tau_m(u^{(k)}_i)|^p)^{\frac{n}{n - 2}}\right)^{\frac{n - 2}n}\\
    \geq & \; \left(\int_\Omega (\sum_{i = 1}^d|\tau_m(u^{(k)}_i)|^p)^{\frac{n}{n - 2}}\right)^{\frac{n - 2}n}\\
    \geq & \; \frac{1}{C_1(d, p)}\left(\int_\Omega (\tau_m(|U^{(k)}|)^p)^{\frac{n}{n - 2}}\right)^{\frac{n - 2}n}\\
    = & \; \frac{1}{C_1(d, p)}\left(\int_\Omega \tau_m(|U^{(k)}|)^{\frac{np}{n - 2}}\right)^{\frac{n - 2}n}. 
\end{split}
\end{equation*}
}\fi
Next we consider the second term on the right-hand side of \eqref{eq:initial_testing}. Writing $f = (f_1, \ldots, f_d)$, the homogeneity assumption of $f$ guarantees that each $f_i$ is homogeneous of degree one. Moreover, item \ref{item:homogeneous_upper_bound} of Lemma \ref{lemma:properties_homogeneous_functions} guarantees that for each $i\in\{1, \ldots, d\}$, there holds $|f_i(s)|\leq M|s|$ for all $s\in \bb R^d$, where $M = \max\{|f(s)|: s\in \bb S^{d - 1}\}$. The $i^{\text{th}}$ summand of the second term on the right-hand side of \eqref{eq:initial_testing} is bounded above using Young's inequality 
\ifdetails{\color{details}%
for products 
}\fi
as follows: 
\begin{equation*}
\begin{split}
    \int_\Omega & \left(f_i(U) + C_0(u_i - u_i^{(k)})\right)|\tau_m(u_i^{(k)})|^{p - 2}\tau_m(u_i^{(k)})\\
    & \leq \int_\Omega\left((M + C_0)|U| + C_0|U^{(k)}|\right)|\tau_m(u_i^{(k)})|^{p - 1}\\
    & \leq \int_\Omega \left(\frac{M + C_0}p|U|^p + \frac{C_0}p|U^{(k)}|^p + \frac{2}{p'}|\tau_m(u_i^{(k)})|^p\right).
\end{split}
\end{equation*}
Summing this estimate over $i$ and using Lemma \ref{lemma:vector_truncation} gives
\begin{equation}
\label{eq:RHS_tested_minor_terms}
\begin{split}
   \sum_{i = 1}^d \int_\Omega & \left(f_i(U) + C_0(u_i - u_i^{(k)})\right)|\tau_m(u_i^{(k)})|^{p - 2}\tau_m(u_i^{(k)})\\
   & \ifdetails{\color{details}
   \; \leq \int_\Omega \left(\frac{d(M + C_0)}p|U|^p + \frac{dC_0}p|U^{(k)}|^p + \frac{2}{p'}\sum_{i = 1}^d|\tau_m(u_i^{(k)})|^p\right)
   }
   \\
    & \fi
    \leq C(M, C_0, p, d)\int_\Omega \left(|U|^p + |U^{(k)}|^p + \tau_m(|U^{(k)}|)^p\right).
\end{split}
\end{equation}
Next we consider the first term on the right-hand side of \eqref{eq:initial_testing}. Using Lemma \ref{lemma:vector_truncation} and the symmetry of $I_\mu$ we have
\begin{equation}
\label{eq:RHS_tested_major_term}
\begin{split}
   \sum_{i = 1}^d \int_\Omega & I_\mu[P^k\cdot U^{(k)}]Q_i^k|\tau_m(u^{(k)}_i)|^{p - 2}\tau_m(u^{(k)}_i)\\
   \leq & \int_\Omega I_\mu[|P^k| |U^{(k)}|]|Q^k|\sum_{i = 1}^d |\tau_m(u^{(k)}_i)|^{p - 1}\\
   \leq &\;  C_1(d, p - 1)\int_\Omega I_\mu[|P^k| |U^{(k)}|]|Q^k|\tau_m(|U^{(k)}|)^{p - 1}\\
   \leq & \; C_1\int_\Omega I_\mu[|P^k|\tau_m(|U^{(k)}|)]|Q^k|\tau_m(|U^{(k)}|)^{p - 1}\\
   & + C_1\int_{\Omega\cap\{|U^{(k)}|> m\}}I_\mu[|Q^k| |U^{(k)}|^{p - 1}]|P^k||U^{(k)}|. 
\end{split}
\end{equation}
\ifdetails{\color{details}
A more detailed version of the previous estimate: 
\begin{equation*}
\begin{split}
   \sum_{i = 1}^d \int_\Omega & I_\mu[P^k\cdot U^{(k)}]Q_i^k|\tau_m(u^{(k)}_i)|^{p - 2}\tau_m(u^{(k)}_i)\\
   \leq & \int_\Omega I_\mu[|P^k| |U^{(k)}|]|Q^k|\sum_{i = 1}^d |\tau_m(u^{(k)}_i)|^{p - 1}\\
   \leq &\;  C_1(d, p - 1)\int_\Omega I_\mu[|P^k| |U^{(k)}|]|Q^k|\tau_m(|U^{(k)}|)^{p - 1}\\
   = & \; C_1\int_\Omega I_\mu[|P^k| |U^{(k)}|\left(\chi_{\{|U^{(k)}|\leq m\}} + \chi_{\{|U^{(k)}|> m\}}\right)]|Q^k|\tau_m(|U^{(k)}|)^{p - 1}\\
   \leq & \; C_1\int_\Omega I_\mu[|P^k| \tau_m(|U^{(k)}|)]|Q^k|\tau_m(|U^{(k)}|)^{p - 1}\\
   & + \; C_1\int_\Omega I_\mu[|P^k| |U^{(k)}|\chi_{\{|U^{(k)}|> m\}}]|Q^k|\tau_m(|U^{(k)}|)^{p - 1}\\
   \leq & \; C_1\int_\Omega I_\mu[|P^k|\tau_m(|U^{(k)}|)]|Q^k|\tau_m(|U^{(k)}|)^{p - 1}\\
   & + C_1\int_{\Omega\cap\{|U^{(k)}|> m\}}I_\mu[|Q^k| |U^{(k)}|^{p - 1}]|P^k||U^{(k)}|. 
\end{split}
\end{equation*}
}\fi
Suppose $p\in [2, \frac{2n}{n - \mu})$ is chosen so that $|U^{(k)}|\in L^p(\Omega)$. Since $|P^k|, |Q^k|\in L^{\frac{2n}{n - \mu}}(\Omega)$ we have $I_\mu[|P^k||U^{(k)}|]|Q^k||U^{(k)}|^{p - 1}\in L^1(\Omega)$ and thus, the last term on the right-most side of \eqref{eq:RHS_tested_major_term} satisfies
\begin{equation}
\label{eq:minor_m}
    \lim_{m\to\infty}\int_{\Omega\cap\{|U^{(k)}|> m\}}I_\mu[|Q^k| |U^{(k)}|^{p - 1}]|P^k||U^{(k)}|
    = 0.
\end{equation}
\ifdetails{\color{details}
The verification that $I_\mu[|P^k||U^{(k)}|]|Q^k||U^{(k)}|^{p - 1}\in L^1(\Omega)$ proceeds as follows: Defining $r$ and $s$ by 
\begin{equation*}
\begin{split}
    \frac 1r & = \frac{n - \mu}{2n} + \frac{p - 1}p\in (0, 1)\\
    \frac 1s & = \frac 1r - \frac{n - \mu}n\in (0, 1), 
\end{split}
\end{equation*}
the HLS inequality and H\"older's inequality gives
\begin{equation*}
\begin{split}
    \|I_\mu[|Q^k||U^{(k)}|^{p - 1}\|_s
    & \leq \mc H(n, \mu, r)\||Q^k||U^{(k)}|^{p- 1}\|_r\\
    & \leq \mc H(n, \mu, r)\|Q^k\|_{\frac{2n}{n - \mu}}\|U^{(k)}\|_p^{p - 1}. 
\end{split}
\end{equation*}
Since, in addition, 
\begin{equation*}
    \||P^k||U^{(k)}|\|_{s'}
    \leq \|P^k\|_{\frac{2n}{n - \mu}}\|U^{(k)}\|_p< \infty
\end{equation*}
we have 
\begin{equation*}
    \int_\Omega I_\mu[|P^k||U^{(k)}|]|Q^k||U^{(k)}|^{p - 1}
    \leq \mc H(n, \mu, r)\|Q^k\|_{\frac{2n}{n - \mu}}\|P^k\|_{\frac{2n}{n - \mu}}\|U^{(k)}\|_p^{2p - 1}< \infty. 
\end{equation*}
}\fi
Moreover, applying Lemma \ref{lemma:moroz_van_interpolation} with $\theta = \frac 2p$ 
\ifdetails{\color{details}
(note that we have $1- \frac\mu n<\theta\leq 1$ in this case) 
}\fi
we find that for any $\epsilon>0$ there is $C_\epsilon>0$ such that 
\begin{equation}
\label{eq:moroz_vans_epsilon_interpolation_applied}
\begin{split}
    \int_\Omega & I_\mu[|P^k|\tau_m(|U^{(k)}|)]|Q^k|\tau_m(|U^{(k)}|)^{p - 1}\\
    & \ifdetails{\color{details}
    \; = \int_\Omega I_\mu[|P^k|\left(\tau_m(|U^{(k)}|)^{p/2}\right)^{\frac 2p}]|Q^k|\left(\tau_m(|U^{(k)}|)^{p/2}\right)^{2 - \frac 2p}
    }
    \\
    & \fi
    \leq  \epsilon\left(\int_\Omega \tau_m(|U^{(k)}|)^{\frac{np}{n - 2}}\right)^{\frac{n- 2}n} + C_\epsilon\int_\Omega \tau_m(|U^{(k)}|)^p.
\end{split}
\end{equation}
For $\epsilon>0$ sufficiently small, bringing \eqref{eq:minor_m} and \eqref{eq:moroz_vans_epsilon_interpolation_applied} back to \eqref{eq:RHS_tested_major_term}, then bringing the resultant estimate together with \eqref{eq:sobolev_side} and \eqref{eq:RHS_tested_minor_terms} back to \eqref{eq:initial_testing} we have
\begin{equation*}
\begin{split}
    \left(\int_\Omega \tau_m(|U^{(k)}|)^{\frac{np}{n - 2}}\right)^{\frac{n - 2}n}
    \leq & \; C\int_\Omega\left(|U|^p + |U^{(k)}|^p + \tau_m(|U^{(k)}|)^p\right) +\circ(1), 
\end{split}
\end{equation*}
where $\circ(1)\to 0$ as $m\to \infty$ and $C$ is independent of both $m$ and $k$. Since $|U^{(k)}|\in L^p$ 
\ifdetails{\color{details}
and $\lim_m\tau_m(|U^{(k)}|) = |U^{(k)}|$ a.e.\ in $\Omega$,
}\fi
 Fatou's Lemma gives
\begin{equation*}
\begin{split}
    \left(\int_\Omega |U^{(k)}|^{\frac{np}{n - 2}}\right)^{\frac{n - 2}n}
    \leq & \; \liminf_m\left(\int_\Omega \tau_m(|U^{(k)}|)^{\frac{np}{n - 2}}\right)^{\frac{n - 2}n}\\
    \leq & \; C\int_\Omega\left(|U|^p + |U^{(k)}|^p\right). 
\end{split}
\end{equation*}
Now we allow $k$ to vary. The previous estimate implies 
\begin{equation}
\label{eq:limsup_k_estimate}
\begin{split}
    \limsup_k\left(\int_\Omega |U^{(k)}|^{\frac{np}{n - 2}}\right)^{\frac{n - 2}n}
    \leq & \; C\limsup_k\int_\Omega\left(|U|^p + |U^{(k)}|^p\right). 
\end{split}
\end{equation}
Starting with $p = p_0\in [2, \min\{2^*, \frac{2n}{n - \mu}\})$, since $U^{(k)}\weakconv U$ weakly in $H_0^1(\Omega; \bb R^d)$, $(U^{(k)})_{k = 1}^\infty$ is bounded in $H_0^1(\Omega; \bb R^d)$, so $(U^{(k)})_{k = 1}^\infty$ is also bounded in $L^{p_0}(\Omega; \bb R^d)$. Estimate \eqref{eq:limsup_k_estimate} implies that $(U^{(k)})_{k =1}^\infty$ is bounded in $L^{p_1}(\Omega; \bb R^d)$ where $p_1 = np_0/(n - 2)$. Compactness of the embedding $H_0^1(\Omega; \bb R^d)\hookrightarrow L^{p_0}(\Omega; \bb R^d)$ implies the existence of a subsequence of $(U^{(k)})_{k = 1}^\infty$ that converges to $U$ in $L^{p_0}(\Omega; \bb R^d)$. Passing to a further subsequence we may assume in addition that $U^{(k)}\to U$ a.e.\ in $\Omega$. Therefore, Fatou's Lemma gives
\begin{equation*}
\begin{split}
    \left(\int_\Omega|U|^{\frac{np_0}{n- 2}}\right)^{\frac{n - 2}n}
    \leq & \; \liminf_k\left(\int_\Omega|U^{(k)}|^{\frac{np_0}{n- 2}}\right)^{\frac{n - 2}n}\\
    \leq & \; \limsup_k\left(\int_\Omega|U^{(k)}|^{\frac{np_0}{n- 2}}\right)^{\frac{n - 2}n}\\
    \leq & \; C\int_\Omega |U|^{p_0}, 
\end{split}
\end{equation*}
and thus $U\in L^{p_1}(\Omega; \bb R^d)$. Using this estimate and by separately considering the case $\mu\in (0, 2]$ and the case $\mu\in (2, n)$, it is routine to verify that $U\in L^p(\Omega; \bb R^d)$ for all $p\in [1, \frac{2n}{n - 2}\cdot \frac{n}{n- \mu})$. 
\ifdetails{\color{details}
Indeed, in the case $\mu\in (0, 2]$ we have $\frac{2n}{n - \mu}< 2^*$ so given $p\in [\frac{2n}{n - 2}, \frac{2n}{n - 2}\cdot \frac{n}{n- \mu})$, choose $p_0 = \frac{(n - 2)p}n\in [2, \frac{2n}{n - \mu})$ and apply the above estimate to find that $U\in L^p(\Omega; \bb R^d)$. In the case $\mu\in (2, n)$, we have $2^*< \frac{2n}{n - \mu}$. Consider the sequence $(q_\ell)_{\ell = 1}^\infty$ defined by $q_\ell = \left(\frac{n}{n - 2}\right)^\ell \cdot 2^*$. Evidently $q_\ell\nearrow\infty$ as $\ell\to\infty$. Let $N\in \bb N$ satisfy $q_{N -1}< \frac{2n}{n - \mu}\leq q_N$. Iterating the above estimate $N$ times gives
\begin{equation*}
    \|U\|_{q_N}
    \leq C\|U\|_{q_{N -1}}
    \leq C^2\|U\|_{q_{N -2}}
    \leq \ldots
    \leq C^N\|U\|_{q_0}, 
\end{equation*}
so by H\"older's inequality $U\in L^q(\Omega; \bb R^d)$ for all $q\in [2, \frac{2n}{n - \mu})$. given $p\in [\frac{2n}{n - 2}, \frac{2n}{n- 2}\cdot \frac{n}{n - \mu})$, choose $p_0 = \frac{(n- 2)p}{n}\in [2, \frac{2n}{n - \mu})$ and apply the above estimate to find that $U\in L^p(\Omega; \bb R^d)$. 
}\fi
\end{proof}
\ifdetails{\color{details}
\noindent{\bf Verification that $U^{(k)}\weakconv U$ weakly in $H_0^1(\Omega; \bb R^d)$.} \\
Here we verify the assertion in the Proof of Lemma \ref{lemma:general_integrability_boost} that $U^{(k)}$ as defined in \eqref{eq:Uk_weak} satisfies $U^{(k)}\weakconv U$ weakly in $H_0^1(\Omega; \bb R^d)$. Observe that $U^{(k)} - U$ weakly solves
\begin{equation*}
    -\lap(U^{(k)} - U) + C_0(U^{(k)} - U) = I_\mu[P^k\cdot U^{(k)}] Q^k - I_\mu[P\cdot U]Q, 
\end{equation*}
and thus, for every $V\in H_0^1(\Omega; \bb R^d)$ we have
\begin{equation}
\label{eq:ak_UkU_test}
\begin{split}
    a_k (U^{(k)} &- U, V)\\
    = & \; \int_\Omega\left(I_\mu[P^k\cdot U^{(k)}] Q^k - I_\mu[P\cdot U]Q\right)\cdot V\\
    &  - \int_\Omega I_\mu[P^k\cdot(U^{(k)} - U)]Q^k\cdot V\\
    = & \; \int_\Omega(I_\mu[P^k\cdot U]Q^k- I_\mu[P\cdot U]Q)\cdot V\\
    = & \; \int_\Omega(I_\mu[(P^k- P)\cdot U]Q^k + I_\mu[P\cdot U](Q^k - Q))\cdot V\\
    = & \; \int_\Omega I_\mu[Q^k\cdot V](P^k- P)\cdot U + \int_\Omega I_\mu[P\cdot U](Q^k - Q)\cdot V,  
\end{split}
\end{equation}
where the final equality holds by the symmetry of $I_\mu$. Now, let $\psi\in [H_0^1(\Omega; \bb R^d)]'$. For each $k$, the Lax-Milgram Theorem guarantees the existence of $V^{(k)}\in H_0^1(\Omega; \bb R^d)$ such that $\psi = a_k(\cdot, V^{(k)})$. Moreover the coercivity of $a_k$ guarantees that $(V^{(k)})_{k = 1}^\infty$ is bounded in $H_0^1(\Omega; \bb R^d)$ (and hence also in $L^{2^*}(\Omega; \bb R^d)$). Indeed, using \eqref{eq:implies_coercive} we have
\begin{equation*}
\begin{split}
    \frac12\|V^{(k)}\|_{H_0^1(\Omega; \bb R^d)}^2
    \leq |a_k(V^{(k)}, V^{(k)})|
    = |\psi(V^{(k)})|
    \leq \|\psi\| \|V^{(k)}\|_{H_0^1(\Omega; \bb R^d)}, 
\end{split}
\end{equation*}
from which we deduce that $\sup_k\|V^{(k)}\|_{H_0^1(\Omega;\bb R^d)}\leq 2\|\psi\|$. Finally, from \eqref{eq:ak_UkU_test}, the assumptions $|P^k|\leq |P|$, $|Q^k|\leq |Q|$, and the choice of $V^{(k)}$, we have
\begin{equation*}
\begin{split}
    |\psi(U^{(k)} - U)|
    = & \; |a_k(U^{(k)} - U, V^{(k)})|\\
    \leq &\; \abs{\int_\Omega I_\mu[Q^k\cdot V^{(k)}](P^k - P)\cdot U} + \abs{\int_\Omega I_\mu[P\cdot U](Q^k - Q)\cdot V^{(k)}}\\
    \leq & \; \int_\Omega I_\mu[|Q^k||V^{(k)}|]|P^k - P||U| + \int_\Omega I_\mu[|P||U|]|Q^k - Q||V^{(k)}|\\
    \leq & \;  \|I_\mu[|Q^k||V^{(k)}|]\|_{\frac{2n}\mu}\|P^k - P\|_{\frac{2n}{n + 2- \mu}}\|U\|_{2^*} \\
    & + \|I_\mu[|P||U|]\|_{\frac{2n}{\mu}}\|Q^k - Q\|_{\frac{2n}{n+ 2 - \mu}}\|V^{(k)}\|_{2^*}\\
    \leq & \;  C\|Q\|_{\frac{2n}{n+ 2 - \mu}}\|V^{(k)}\|_{2^*}\|P^k - P\|_{\frac{2n}{n + 2- \mu}}\|U\|_{2^*} \\
    & + C\|P\|_{\frac{2n}{n+ 2 - \mu}}\|U\|_{2^*}\|Q^k - Q\|_{\frac{2n}{n+ 2 - \mu}}\|V^{(k)}\|_{2^*}\\
    & \to 0, 
\end{split}
\end{equation*}
where the convergence to zero follows since both $P^k\to P$ and $Q^k\to Q$ in $L^{\frac{2n}{n + 2 - \mu}}(\Omega; \bb R^d)$. This completes the verification that $U^{(k)}\weakconv U$ weakly in $H_0^1(\Omega; \bb R^d)$. 
}\fi
\begin{proof}[Proof of Proposition \ref{prop:C1alpha_regularity}]
Lemma \ref{lemma:homogeneous_integrability} and Corollary \ref{coro:integrability_boost} guarantee the existence of $q> n/(n - \mu)$ for which $H(U)\in L^q(\Omega)$, from which we deduce that $I_\mu[H(U)]\in L^\infty(\Omega)$. Setting $a = I_\mu[H(U)]\in L^\infty(\Omega)$, for every $i\in \{1, \ldots, d\}$, we have
\begin{equation}
\label{eq:bounded_coefficient_problem}
    -\lap u_i= f_i(U) + a(x)h_i(U).  
\end{equation}
Moreover, for every $p\in [1, \frac{2n}{n -2}\cdot \frac{n}{n - \mu})$ we have the containments $u_i\in L^p(\Omega) $, $f_i\circ U\in L^p(\Omega)$, and $h_i\circ U\in L^{p/(2^*_\mu - 1)}(\Omega)$. The assertion of Proposition \ref{prop:C1alpha_regularity} is established via a standard bootstrap-style argument. For the reader's convenience we outline the details. In what follows we separately consider the case $\mu\in [4, n)$ and the case $\mu\in(0, 4)$. 
\begin{enumerate}[label = {\bf Case \arabic*.}, ref = {\bf Case \arabic*}, wide = 0pt]
    \item \label{item:mu_large_regularity} $\mu\in [4, n)$.\\
    In this case we have $1< 2^*_\mu\leq 2$ so the \ref{item:mu_large_regularity} assumption and the fact that $u_i$ satisfies \eqref{eq:bounded_coefficient_problem} guarantees that $-\lap u_i\in L^p(\Omega)$ for all $p\in [1, 2^*\cdot\frac{n}{n - \mu})$. Since $\frac{2n}{n + 2}< 2^*\cdot\frac{n}{n - \mu}$, the standard bootstrap argument gives $U\in C^{1, \alpha}(\bar\Omega; \bb R^d)$ for some $\alpha\in (0, 1)$. 
    \item \label{item:mu_small_regularity} $\mu\in (0, 4)$.\\
    In this case we have $2^*_\mu - 1 > 1$ and thus for every $i\in \{1, \ldots, d\}$ and every $p\in [1, \frac{2n}{n - 2}\cdot \frac{n}{n - \mu})$ we have $-\lap u_i\in L^{p/(2^*_\mu - 1)}(\Omega)$. In what follows we separately consider the subcase $(n - \mu)(n + 2 - \mu)< 4n$ and the subcase $(n - \mu)(n + 2 - \mu)\geq 4n$. 
    \ifdetails{\color{details}
    These subcases correspond to $\frac n2< \frac{2n}{n - 2}\cdot \frac n{n - \mu}\cdot \frac{1}{2^*_\mu - 1}$ and $\frac n2\geq \frac{2n}{n - 2}\cdot \frac n{n - \mu}\cdot \frac{1}{2^*_\mu - 1}$ respectively. 
    }\fi
    \begin{enumerate}[label = {\bf Subcase  2(\alph*).}, ref = {\bf Subcase  2(\alph*)}, wide = 0pt]
        \item $(n - \mu)(n + 2 - \mu)< 4n$. \\
        In this case there is $p_0\in (\frac n2, \frac{2n}{n - 2}\cdot \frac{n}{n - \mu}\cdot \frac 1{2^*_\mu - 1})$ such that for all $i\in \{1, \ldots, d\}$ there holds $-\lap u_i\in L^{p_0}(\Omega)$. Standard arguments using the elliptic $L^p$ theory, the Sobolev embedding and the assumption that $\bdy\Omega\in C^{1, 1}$ guarantee the existence of $\alpha\in (0, 1)$ such that $u_i\in C^{1, \alpha}(\bar \Omega)$ for all $i$. 
        \ifdetails{\color{details}
        Indeed, fixing any such $p_0$, the elliptic $L^p$ theory, the Sobolev embedding and the fact that $\bdy\Omega\in C^{0, 1}$ give
        \begin{equation*}
            u_i \in W_0^{1, p_0}(\Omega)\cap W^{2, p_0}(\Omega)
           \hookrightarrow C^{0, \alpha}(\bar\Omega)
        \end{equation*}
        for some $\alpha\in (0, 1)$. Since each $u_i$ satisfies \eqref{eq:bounded_coefficient_problem}, we deduce the existence of $q> n$ for which the containment $-\lap u_i\in L^q(\Omega)$ holds for all $i$. The elliptic $L^p$-theory, the Sobolev embedding and the assumption that $\bdy \Omega \in C^{1, 1}$ guarantees that 
        \begin{equation*}
            u_i \in W_0^{1, q}(\Omega)\cap W^{2, q}(\Omega)
           \hookrightarrow C^{1, \alpha}(\bar\Omega)
        \end{equation*}
        for some $\alpha\in (0, 1)$. 
        }\fi
        \item $(n - \mu)(n + 2 - \mu)\geq 4n$.\\
        \ifdetails{\color{details}
        Note that $\frac{4 - \mu}{n + 2 - \mu} = \frac{2^*_\mu - 2}{2^*_\mu - 1}$ and $\frac{2n^2}{(n - \mu)(n + 2 - \mu)} = \frac{2n}{n - 2}\cdot \frac{n}{n - \mu}\cdot \frac{1}{2^*_\mu- 1}$.
        }\fi 
        In this case choose $p_0\in (\frac n2\cdot \frac{4 - \mu}{n + 2 - \mu}, \frac{2n^2}{(n - \mu)(n + 2 - \mu)})$ and define $N = N(p_0)$ by 
        \begin{equation}
        \label{eq:num_iterations}
            N 
            = \min\left\{\ell\in \bb N\cup\{0\}: \frac{1}{p_0}< \frac 2n\sum_{k = 0}^{\ell + 1}(2^*_\mu - 1)^{-k}\right\}. 
        \end{equation}
        The choice of $p_0$ ensures that $N$ is well-defined and that $N\in \bb N\cup \{0\}$. 
        \ifdetails{\color{details}
        Indeed, to see that $N$ is well-defined note that the assumed lower bound on $p_0$ gives
        \begin{equation*}
        \begin{split}
            \frac 1{p_0}
            & < \frac2n\cdot \frac{n + 2 - \mu}{4 - \mu}\\
            & = \frac2n\cdot \frac{2^*_\mu - 1}{2^*_\mu - 2}\\
            & = \frac 2n\sum_{k = 0}^\infty(2^*_\mu - 1)^{-k}, 
        \end{split}
        \end{equation*}
        so the set over which the minimum is taken in \eqref{eq:num_iterations} is non empty. To see that $N\in \bb N\cup\{0\}$, note that the subcase assumption and the choice of $p_0$ give
        \begin{equation*}
        \begin{split}
            p_0 
            & < \frac{2n^2}{(n - \mu)(n + 2 - \mu)}\\
            & = \frac{2n}{n - 2}\cdot \frac{n}{n - \mu}\cdot \frac{1}{2^*_\mu- 1}\\
            & \leq \frac n2
            = \frac n2\sum_{k = 0}^0(2^*_\mu - 1)^{-k}. 
        \end{split}
        \end{equation*}
        }\fi
    Defining $p_1, \ldots, p_{N + 1}$ by 
    \begin{equation}
    \label{eq:pell}
        \frac 1{p_\ell} = \frac{(2^*_\mu - 1)^\ell}{p_0} - \frac 2n\sum_{k = 1}^\ell (2^*_\mu - 1)^k;
        \quad \text{ for }\ell = 1, \ldots, N + 1
    \end{equation}
    we have
    \begin{equation*}
        p_{\ell + 1}
        = \frac{1}{2^*_\mu - 1}\cdot \frac{np_\ell}{n - 2p_\ell}
        \quad \text{ for }\ell = 0, \ldots, N. 
    \end{equation*}
    Moreover, the definition of $N$ in \eqref{eq:num_iterations} guarantees that $p_N\leq \frac n2< p_{N + 1}$. 
    \ifdetails{\color{details}
    Indeed, for any index $\ell \in \{1, \ldots, N\}$ using \eqref{eq:pell} we find that $p_\ell > \frac n2$ if and only if 
    \begin{equation*}
        \frac 1{p_0}
        < \frac 2n\sum_{k = 0}^\ell(2^*_\mu - 1)^{-k}, 
    \end{equation*}
    so the definition of $N$ guarantees that
    \begin{equation*}
    	\frac 2n\sum_{k = 0}^N(2^*_\mu - 1)^{-k}
	\leq \frac 1{p_0}
	< \frac 2n\sum_{k = 0}^{N + 1}(2^*_\mu - 1)^{-k}. 
    \end{equation*}
    That is, $p_N\leq \frac n2 < p_{N + 1}$. 
    }\fi
    In fact, by decreasing $p_0$ slightly if necessary we may assume that $p_N< \frac n2< p_{N + 1}$. Now we iteratively improve the integrability of $-\lap u_i$. For each $i\in \{1, \ldots, d\}$, starting with $-\lap u_i\in L^{p_0}(\Omega)$, the elliptic $L^p$ theory and the Sobolev embedding guarantee that $u_i\in L^{\frac{np_0}{n - 2p_0}}(\Omega)$. 
    \ifdetails{\color{details}
    More specifically, we have
    \begin{equation*}
        u_i \in W_0^{1, p_0}(\Omega)\cap W^{2, p_0}(\Omega)
        \hookrightarrow L^{\frac{np_0}{n - 2p_0}}(\Omega). 
    \end{equation*}
    }\fi
    Since $u_i$ satisfies \eqref{eq:bounded_coefficient_problem} and in view of the \ref{item:mu_small_regularity} assumption, we find that $-\lap u_i\in L^{p_1}(\Omega)$. For $\ell = 1, \ldots, N$, having shown that $-\lap u_i\in L^{p_\ell}(\Omega)$, the same argument shows that $-\lap u_i\in L^{p_{\ell + 1}}(\Omega)$. In particular we have $-\lap u_i\in L^{p_{N + 1}}(\Omega)$ so since $p_{N + 1}> \frac n2$, the elliptic $L^p$ theory, the Sobolev embedding and the assumption that $\bdy \Omega\in C^{1, 1}$ give $u_i\in C^{1, \alpha}(\bar\Omega)$ for some $\alpha\in (0, 1)$. 
    \ifdetails{\color{details}
    More specifically, since $p_{N + 1}> \frac n2$ and since $\bdy \Omega\in C^{0, 1}$ we have
    \begin{equation*}
        u_i \in W_0^{1, p_{N + 1}}(\Omega)\cap W^{2, p_{N + 1}}(\Omega)
        \hookrightarrow C^{0, \alpha}(\bar\Omega)
    \end{equation*}
    for some $\alpha\in (0, 1)$. Using this containment and the fact that $u_i$ satisfies \eqref{eq:bounded_coefficient_problem} we find that $-\lap u_i\in L^q(\Omega)$ for some $q>n$. For any such $q$, the elliptic $L^p$ theory, the Sobolev embedding and the assumption that $\bdy\Omega\in C^{1, 1}$ guarantee that 
    \begin{equation*}
        u_i \in W_0^{1, q}(\Omega)\cap W^{2,q}(\Omega)
        \hookrightarrow C^{1, \alpha}(\bar\Omega)
    \end{equation*}
    for some $\alpha\in (0, 1)$.
    }\fi
\end{enumerate}
\end{enumerate}
\end{proof}
\ifdetails{\color{details}
\begin{remark}
An alternative proof of \ref{item:mu_small_regularity} of Proposition \ref{prop:C1alpha_regularity} proceeds as follows: Since $u_i$ satisfies \eqref{eq:bounded_coefficient_problem} for all $i$, one can modify the proof of Proposition 2.1 of \cite{Gluck2025anisotropic} to find that $U\in L^q(\Omega; \bb R^d)$ for all $q\in [1, \infty)$. For each $i\in \{1, \ldots, d\}$, the standard elliptic theory guarantees that $u_i\in W_0^{1, q}(\Omega)\cap W^{2, q}(\Omega)$ for some $q>n$, so since $\bdy \Omega\in C^{1, 1}$ we deduce the existence of $\alpha\in (0, 1)$ for which $U\in C^{1,\alpha}(\bar\Omega; \bb R^d)$.
\end{remark}
}\fi
%
\ifdetails{\color{details} 
\begin{lemma}
\label{lemma:correct_minimization_problem}
Under suitable assumptions on $F$, $\mu$, $H$, if $U\in H_0^1(\Omega; \bb R^d)$ minimizes the functional $\Phi_F$ in \eqref{eq:intro_Phi_AF} subject to the constraint \eqref{eq:the_constraint} then a positive constant multiple of $U$ is a weak solution to \eqref{eq:main_problem}. 
\end{lemma}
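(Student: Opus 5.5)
The plan is a Lagrange multiplier argument on the constraint manifold $\mc M$, followed by a rescaling that absorbs the multiplier. First I would check that $\Phi$ and $\Psi$ are $C^1$ on $H_0^1(\Omega;\bb R^d)$. For $\Phi$ this follows from $F\in C^1$ and the bound $|\Grad F(s)|\le C|s|$ (Lemma \ref{lemma:properties_homogeneous_functions}\ref{item:derivatives_homogeneous}); one gets
\begin{equation*}
	\Phi'(U)V = 2\int_\Omega \Grad U\cdot\Grad V - \int_\Omega \lb \Grad F(U), V\rb .
\end{equation*}
For $\Psi$ I would use $|\Grad H(s)|\le C|s|^{2^*_\mu-1}$, H\"older, HLS and the symmetry of $I_\mu$, which give
\begin{equation*}
	\Psi'(U)V = 2\int_\Omega I_\mu[H(U)]\lb \Grad H(U), V\rb = 2\cdot 2^*_\mu\int_\Omega I_\mu[H(U)]\lb h(U),V\rb .
\end{equation*}
Continuity of these derivatives comes from dominated-convergence arguments of the kind used in Lemma \ref{lemma:BL_type}. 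Next, $\mc M$ is a $C^1$ manifold near $U$: by Lemma \ref{lemma:properties_homogeneous_functions}\ref{item:homogeneous_grad_dot}, $\Psi'(U)U = 2\cdot 2^*_\mu\Psi(U) = 2\cdot 2^*_\mu\neq 0$.

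The Lagrange multiplier theorem then gives $\gamma\in\bb R$ with $\Phi'(U) = \gamma\Psi'(U)$. Dividing by $2$ and using $f=\frac12\Grad F$, this says $U$ weakly solves
\begin{equation*}
	-\lap U = f(U) + \gamma\, 2^*_\mu\, I_\mu[H(U)]h(U).
\end{equation*}
To find $\gamma$, I would test against $V=U$ and apply homogeneity again: $2\Phi(U) = \gamma\cdot 2\cdot 2^*_\mu\Psi(U)$. Hence $\gamma\, 2^*_\mu = \Phi(U) = K^{-1}$, which is positive because $\Phi$ is coercive (Lemma \ref{lemma:Phi_AF_coercive}, using $M_F<\lambda_1$, or trivially if $M_F\le 0$). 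So I set $\kappa = K^{-1}>0$.

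Finally I would rescale by $W = cU$ with $c>0$. The term $f$ is $1$-homogeneous, while $I_\mu[H(W)]h(W) = c^{2\cdot 2^*_\mu -1}I_\mu[H(U)]h(U)$. Therefore $W$ solves the equation with coefficient $\kappa c^{2-2\cdot 2^*_\mu}$ in front of the nonlocal term. Choosing $c = \kappa^{1/(2\cdot 2^*_\mu-2)}$ makes this coefficient equal to $1$, and this is well-defined because $2^*_\mu>1$. Then $W$ is a nontrivial weak solution of \eqref{eq:main_problem}.

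The main obstacle is the $C^1$-regularity of $\Psi$ on $H_0^1$, in particular continuity of $\Psi'$. I would handle it by showing that $U\mapsto H(U)$ is continuous into $L^{2n/(2n-\mu)}$ and $U\mapsto \Grad H(U)$ is continuous into $L^{2^*/(2^*_\mu-1)}$. Both follow from Nemytskii-operator continuity via the homogeneity bounds, after which HLS finishes the argument.
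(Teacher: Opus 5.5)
Your proposal is correct and follows the paper's argument. It uses a Lagrange multiplier for $\Phi$ on $\mc M$, tests with $V=U$ to identify the multiplier as $\min\Phi|_{\mc M}=K^{-1}$, and then rescales $U$ by a positive constant using the homogeneity of $f$, $H$ and $h$. Along the way you make explicit several points the paper leaves implicit: the $C^1$ regularity of $\Psi$, the nondegeneracy $\Psi'(U)U=2\cdot 2^*_\mu\neq 0$, the positivity of $K^{-1}$ via coercivity, and the explicit constant $c=(K^{-1})^{1/(2\cdot 2^*_\mu-2)}$.
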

\begin{proof}
If $U\in \mc M$ is a constrained minimizer then there is a Lagrange multiplier $\lambda\in \bb R$ for which 
\begin{equation}
\label{eq:lagrange_multiplier}
	\Phi_F'(U) = \lambda \Psi'(U), 
\end{equation}
where the derivatives are understood in the Fr\'echet sense. For any $V= (v_1, \ldots, v_d) \in H_0^1(\Omega; \bb R^d)$ we have both 
\begin{equation*}
\begin{split}
	\lb \Phi_F'(U), V\rb
	& = \left.\frac{\d }{\d t}\right|_{t = 0}\left(\sum_{j = 1}^d\int_\Omega |\Grad(u_j + tv_j)|^2 - \int_\Omega F(U + tV)\right)\\
	& = 2\sum_{j = 1}^d\int_\Omega \Grad u_j \cdot \Grad v_j - \int_\Omega \Grad F(U)\cdot V\\
	& = 2\left(\sum_{j = 1}^d\int_\Omega \Grad u_j\cdot \Grad v_j - \int_\Omega f(U)\cdot V\right). 
\end{split}
\end{equation*}
and, using the symmetry of $I_\mu$, 
\begin{equation*}
\begin{split}
	\lb \Psi_F'(U), V\rb
	& = \left.\frac{\d}{\d t}\right|_{t = 0}\int_\Omega I_\mu[H(U + tV)]H(U + tV)\\
	& = \int_\Omega\left(I_\mu[H(U)]\Grad H(U)\cdot V + I_\mu[\Grad H(U)\cdot V] H(U)\right)\\
	& = 2\int_\Omega I_\mu[H(U)]\Grad H(U)\cdot V\\
	& = 2\cdot 2^*_\mu \int_\Omega I_\mu[H(U)]h(U)\cdot V. 
\end{split}
\end{equation*}
Returning to equation \eqref{eq:lagrange_multiplier} we have
\begin{equation}
\label{eq:detailed_lagrange_multiplier}
	2\left(\sum_{j = 1}^d\int_\Omega \Grad u_j\cdot \Grad v_j - \int_\Omega f(U)\cdot V\right)
	= 2\cdot 2^*_\mu \lambda \int_\Omega I_\mu[H(U)]h(U)\cdot V
\end{equation}
for all $V\in H_0^1(\Omega; \bb R^d)$. Using $V = U$ in this equality and using both Lemma \ref{} and the assumption that $U\in \mc M$ is a constrained minimizer gives 
\begin{equation*}
\begin{split}
	\min \Phi_F|_{\mc M}
	& = \int_\Omega |\Grad U|^2 - \int_\Omega F(U)\\
	& = 2^*_\mu \lambda\int_\Omega I_\mu[H(U)]H(U)\\
	& = 2^*_\mu \lambda
\end{split}
\end{equation*}
from which the value of $\lambda$ is easily deduced. Bringing this value of $\lambda$ back into \eqref{eq:detailed_lagrange_multiplier} gives
\begin{equation*}
	\sum_{j = 1}^d \int_\Omega \Grad u_j\cdot \Grad v_j
	= \int_\Omega \left(f(U) + (\min \Phi_F|_{\mc M})I_\mu[H(U)]h(U)\right)\cdot V
\end{equation*}
for all $V\in H_0^1(\Omega; \bb R^d)$. Thus, $U$ is a weak solution to 
\begin{equation*}
	-\lap U = f(U) + (\min \Phi_F|_{\mc M})I_\mu[H(U)]h(U). 
\end{equation*}
Using the homogeneity properties of $f$, $H$ and $h$ we see that if $c$ is a suitably chosen positive constant (depending on $\min \Phi_F|_{\mc M}$) then $cU$ is a weak solution to problem \eqref{eq:main_problem}. 
\end{proof}
} 
\fi 
%
%

\input{semilinear_elliptic_systems_nonlocal_homogeneous_critical_nonlinearities_arxiv.bbl}

\end{document}

%% file: structure.tex
\usepackage[T1]{fontenc} 
\usepackage[utf8]{inputenc} 
\usepackage{amssymb}
\usepackage{bbm} 
\usepackage{graphicx} 
\usepackage{subcaption} 
\usepackage[dvipsnames]{xcolor}
\graphicspath{{Figures/}} 
\usepackage{enumitem} 
\setlist[enumerate, 1]{label = {\bf\arabic*.}, ref = {\bf\arabic*}, leftmargin=*} 
\setlist[enumerate, 2]{label = {\bf(\alph*)}, leftmargin=*} 
\usepackage{varioref} 
\usepackage{tikz-cd} 
\usetikzlibrary{arrows,arrows.meta, calc}
\tikzcdset{every arrow/.append style = -{Latex[scale=1]}}
\usepackage{bm}
\usepackage{slashed} 
\usepackage{esint} 
\usepackage{amsaddr} 
\usepackage{blkarray}
\usepackage{nicematrix}
\usepackage{amsthm}
\theoremstyle{plain} 
\newtheorem{theorem}{Theorem}
\newtheorem{lemma}[theorem]{Lemma}
\newtheorem{prop}[theorem]{Proposition}
\newtheorem{coro}[theorem]{Corollary}

\newtheorem{oldtheorem}{Theorem}

\theoremstyle{definition} 
\newtheorem{defn}[theorem]{Definition}

\theoremstyle{remark} 
\newtheorem{remark}[theorem]{Remark}

\numberwithin{equation}{section}
\numberwithin{theorem}{section}

\usepackage[bookmarks, colorlinks=true, pdfstartview=FitV, linkcolor=blue, citecolor=blue, urlcolor=blue]{hyperref}

\renewcommand{\d}{\mathrm d}
\newcommand{\lap}{\Delta}
\newcommand{\Grad}{\nabla}
\newcommand{\bdy}{\partial}
\newcommand{\weakconv}{\rightharpoonup}

\DeclareMathOperator{\supp}{supp}

\newcommand{\bb}{\mathbb}
\newcommand{\mc}{\mathcal}
\newcommand{\abs}[1]{\left|#1\right|}

\newcommand{\lb}{\langle}
\newcommand{\rb}{\rangle}

%% file: semilinear_elliptic_systems_nonlocal_homogeneous_critical_nonlinearities_arxiv.bbl
\newcommand{\etalchar}[1]{$^{#1}$}